\definecolor{gray}{gray}{0.7}
\definecolor{Gray}{gray}{0.3}
\numberwithin{equation}{section}
\theoremstyle{break}
 \newtheorem{theorem}{Theorem}[section]
 \newtheorem{proposition}[theorem]{Proposition}
 \newtheorem{corollary}[theorem]{Corollary}
 \newtheorem{lemma}[theorem]{Lemma}
 \theoremstyle{definition}
 \newtheorem{remark}[theorem]{Remark}
 \newtheorem{example}[theorem]{Example}
\def\C{\mathbb C}
\def\Z{\mathbb Z}
\def\gl{\mathfrak{gl}}
\def\b{\mathfrak{b}}
\def\ZZ{\mathcal{Z}}
\def\matX{\mathsf{x}}
\def\matN{\mathsf{n}}
\def\matS{\mathsf{s}}
\def\matA{\mathsf{a}}
\def\hh{\mathbf{h}}
\def\HessSp{\mathsf{H}}
\DeclareMathOperator{\Spec}{Spec}
\DeclareMathOperator{\GL}{GL}
\DeclareMathOperator{\U}{U}
\DeclareMathOperator{\Hess}{Hess}
\DeclareMathOperator{\Pet}{Pet}
\DeclareMathOperator{\Fl}{Fl}
\begin{document}

\title[Coordinate rings and cohomology rings of Hessenberg varieties]{Coordinate rings of regular semisimple Hessenberg varieties and cohomology rings of regular nilpotent Hessenberg varieties}
\author [T. Horiguchi]{Tatsuya Horiguchi}
\address{National Institute of Technology, Akashi College, 679-3, Nishioka, Uozumi-cho, Akashi, Hyogo 674-8501, Japan}
\email{tatsuya.horiguchi0103@gmail.com}

\subjclass[2020]{Primary 14M15, 05E05, 14N35}

\keywords{flag varieties, Hessenberg varieties, symmetric polynomials, quantization.} 

\begin{abstract}
The polynomials $f_{i,j}$ are introduced by Abe--Harada--Horiguchi--Masuda to produce an explicit presentation by generators and relations of the cohomology rings of regular nilpotent Hessenberg varieties. 
In this paper we quantize the polynomials $f_{i,j}$ by a method of Fomin--Gelfand--Postnikov. 
Our main result states that their quantizations $F_{i,j}$ are related to the coordinate rings of regular semisimple Hessenberg varieties.
This result yields a connection between the coordinate rings of regular semisimple Hessenberg varieties and the cohomology rings of regular nilpotent Hessenberg varieties. 
We also provide the quantized recursive formula for $F_{i,j}$. 
\end{abstract}

\maketitle

\setcounter{tocdepth}{1}

\tableofcontents

\section{Introduction}
\label{section:introduction}

Hessenberg varieties are closed subvarieties of flag varieties introduced in \cite{dMPS, dMS}.
The geometry and topology of Hessenberg varieties yield a fruitful intersection with other research areas such as quantum cohomology of flag varieties (\cite{Kos, Pet, Rie}), logarithmic derivation modules in hyperplene arrangements (\cite{AHMMS}), and Stanley's chromatic symmetric functions in graph theory (\cite{BroCho, ShaWac}).
These connections are discovered in regular nilpotent and regular semisimple Hessenberg varieties.
In this paper we connect the coordinate rings of regular semisimple Hessenberg varieties and the cohomology rings\footnote{Throughout this paper, all cohomology will be taken with complex coefficients unless otherwise specified.} of regular nilpotent Hessenberg varieties via quantizations.

Let $\Fl(\C^n)$ be the complex manifold of complete flags $V_\bullet =(V_1 \subset V_2 \subset \cdots \subset V_n = \C^n)$ in the $n$-dimensional linear space $\C^n$.
A Hessenberg function $\hh : \{1,2,\ldots,n\} \to \{1,2,\ldots,n\}$ is a weakly increasing function satisfying $\hh(j) \geq j$ for all $1 \leq j \leq n$. 
Let $\matX: \C^n \to \C^n$ be a linear operator and $\hh: \{1,2,\ldots,n\} \rightarrow \{1,2,\ldots,n\}$ a Hessenberg function. 
Then the Hessenberg variety $\Hess(\matX,\hh)$ associated to $\matX$ and $\hh$ is defined by
\begin{align*} 
\Hess(\matX,\hh) = \{ V_\bullet \in \Fl(\C^n) \mid \matX V_i \subset V_{\hh(i)} \ \textrm{for all} \ 1 \leq i \leq n \}. 
\end{align*}
Note that if we take $\hh(j)=n$ for all $j \in [n]$, then we have $\Hess(\matX,\hh)=\Fl(\C^n)$.
Throughout this paper we take $\matX$ as the regular nilpotent element $\matN$ and the regular semisimple element $\matS$ described in \eqref{eq:regular nilpotent semisimple}. 
Then $\Hess(\matN,\hh)$ is called regular nilpotent Hessenberg varieties and $\Hess(\matS,\hh)$ is called regular semisimple Hessenberg varieties. 
We first review their cohomology rings studied in \cite{AHHM}.

\subsection{Cohomology rings}
Borel represents in \cite{Bor53} the integral cohomology ring of $\Fl(\C^n)$ as a quotient of a polynomial ring $\Z[x_1,\ldots,x_n]$ by an ideal generated by the $i$-th elementary symmetric polynomials $e_i^{(n)}$ in the variables $x_1,\ldots,x_n$ for $i \in [n]$:
\begin{align*}
H^*(\Fl(\C^n);\Z) \cong \Z[x_1,\ldots,x_n]/(e_1^{(n)},\ldots,e_n^{(n)}).
\end{align*}
By \cite{AHHM} the restriction map $H^*(\Fl(\C^n)) \to H^*(\Hess(\matN,\hh))$ with complex coefficients is surjective and an explicit presentation for $H^*(\Hess(\matN,\hh))$ is given as follows.
For $i \geq j \geq 1$, the polynomials $f_{i,j}$ are defined by
\begin{align} \label{eq:Intro_fij}
f_{i,j} = \sum_{k=1}^{j} \left(\prod_{\ell=j+1}^i (x_k-x_\ell) \right) x_k
\end{align}
with the convention $\prod_{\ell=j+1}^j (x_k-x_\ell)=1$.
Then the cohomology ring of $\Hess(\matN,\hh)$ is 
\begin{align*}
H^*(\Hess(\matN,\hh)) \cong \C[x_1,\ldots,x_n]/(f_{\hh(1),1},\ldots,f_{\hh(n),n}).
\end{align*}
Note that the ideal $(f_{n,1},\ldots,f_{n,n})$ equals the ideal $(e_1^{(n)},\ldots,e_n^{(n)})$ in $\C[x_1,\ldots,x_n]$.
By using the recursive formula for $f_{i,j}$, the explicit presentation for $H^*(\Hess(\matN,\hh))$ above can be written as
\begin{align*} 
H^*(\Hess(\matN,\hh)) \cong \C[x_1,\ldots,x_n]/(f_{i,j} \mid i \geq \hh(j)). 
\end{align*}
This presentation include redundant generators, but it is useful to correlate to our main theorem described below.
These polynomials $f_{i,j}$ play an important role in this paper. 
Note that the polynomials $f_{i,j}$ are deeply related with the logarithmic derivation modules for ideal arrangements by \cite{AHMMS}, as mentioned in the beginning.

Further, in \cite{AHHM}, by using the explicit presentation for $H^*(\Hess(\matN,\hh))$, the cohomology ring $H^*(\Hess(\matN,\hh))$ is related with the cohomology ring $H^*(\Hess(\matS,\hh))$ through an action of the symmetric group $S_n$ on $H^*(\Hess(\matS,\hh))$ introduced by Tymoczko in \cite{Tym08}. 
Specifically, there is an isomorphism of graded $\C$-algebras
\begin{align*} 
H^*(\Hess(\matN,\hh)) \cong H^*(\Hess(\matS,\hh))^{S_n},
\end{align*}
where $H^*(\Hess(\matS,\hh))^{S_n}$ denotes the invariants in $H^*(\Hess(\matS,\hh))$ under the $S_n$-action. 
Note that the $S_n$-action on $H^*(\Hess(\matS,\hh))$ by Tymoczko is intimately connected with Stanley's chromatic symmetric functions for incomparability graphs of natural unit interval orders by \cite{BroCho, ShaWac}.

In this paper, we consider the quantizations of the polynomials $f_{i,j}$ by a similar way to construct quantum Schubert polynomials by Fomin--Gelfand--Postnikov in \cite{FGP}. 
The main theorem in this paper states that their quantizations of $f_{i,j}$ are related to the coordinate rings of regular semisimple Hessenberg varieties.
We breifly recount Fomin--Gelfand--Postnikov's quantization.

\subsection{Quantization}
The quantum analogue of Borel's presentation is obtained from Givental--Kim \cite{GK} and Ciocan-Fontanine \cite{Font95}. 
They prove that the quantum cohomology ring of $\Fl(\C^n)$ is isomorphic to the quotient of a polynomial ring $\Z[x_1,\ldots,x_n,q_1,\ldots,q_{n-1}]$ by an ideal generated by quantized elementary symmetric polynomials $E_1^{q \, (n)}, \ldots, E_n^{q \, (n)}$:
\begin{align*}
QH^*(\Fl(\C^n);\Z) \cong \Z[x_1,\ldots,x_n,q_1,\ldots,q_{n-1}]/(E_1^{q \, (n)}, \ldots, E_n^{q \, (n)}).
\end{align*}
Here, the quantized elementary symmetric polynomial $E_i^{q \, (n)}$ is defined by the coefficient of $\lambda^{n-i}$ multiplied by $(-1)^i$ for the characteristic polynomial in $\lambda$ of the matrix 
\begin{align*}
M_n^q = \left(
 \begin{array}{@{\,}ccccc@{\,}}
     x_1 & q_1 & 0 & \cdots & 0 \\
     -1 & x_2 & q_2 & \ddots & \vdots \\ 
      0 & \ddots & \ddots & \ddots & 0 \\ 
      \vdots & \ddots & -1 & x_{n-1} & q_{n-1} \\
      0 & \cdots & 0 & -1 & x_n 
 \end{array}
 \right).
\end{align*}
Namely, we have
\begin{align*}
\det(\lambda I_n - M_n^q) = \lambda^n - E_1^{q \, (n)} \lambda^{n-1} + E_2^{q \, (n)} \lambda^{n-2} + \cdots + (-1)^n E_n^{q \, (n)}
\end{align*}
where $I_n$ is the identity matrix of order $n$. 
Note that each $E_i^{q \, (n)}$ specializes to $e_i^{(n)}$ in setting $q_1=\cdots=q_{n-1}=0$. 
In \cite{FGP}, Fomin--Gelfand--Postnikov introduce difference operators on $\Z[x_1,\ldots,x_n,q_1,\ldots,q_{n-1}]$ and define the quantization map 
\begin{align*}
\psi: \Z[x_1,\ldots,x_n,q_1,\ldots,q_{n-1}] \to \Z[x_1,\ldots,x_n,q_1,\ldots,q_{n-1}]
\end{align*}
by using the difference operators.
Note that $\psi$ is an isomorphism as $\Z[q_1,\ldots,q_{n-1}]$-modules.
They prove that $\psi$ sends $e_i^{(k)}$ to $E_i^{q \, (k)}$ for any $1 \leq i \leq k \leq n$. 
The quantization map $\psi$ is \emph{not} a ring homomorphism.
Nevertheless, $\psi$ bijectively maps the ideal $(e_1^{(n)},\ldots,e_n^{(n)})$ onto the ideal $(E_1^{q \, (n)},\ldots,E_n^{q \, (n)})$. 
Therefore, the induced map is geometrically an isomorphism of $\Z[q_1,\ldots,q_{n-1}]$-modules:
\begin{align*}
H^*(\Fl(\C^n);\Z) \otimes \Z[q_1,\ldots,q_{n-1}] \to QH^*(\Fl(\C^n);\Z).
\end{align*}
In more general, we set 
\begin{align*}
e_{i_1,\ldots,i_m} = e_{i_1}^{(1)} \cdots e_{i_m}^{(m)} 
\end{align*}
for $i_1,\ldots,i_m$ with $0 \leq i_k \leq k$. 
These polynomials form an additive basis of the polynomial ring $\Z[x_1,x_2,\ldots]$ in infinitely many variables. 
In particular, the cohomology classes represented by the polynomials $e_{i_1,\ldots,i_{n-1}}$ form an additive basis for $H^*(\Fl(\C^n);\Z)$.
Similarly, let 
\begin{align*}
E_{i_1,\ldots,i_m}^q = E_{i_1}^{q \, (1)} \cdots E_{i_m}^{q \, (m)}. 
\end{align*}
Then, the cohomology classes represented by $E_{i_1,\ldots,i_{n-1}}^q$ form a $\Z[q_1,\ldots,q_{n-1}]$-module basis of $QH^*(\Fl(\C^n);\Z)$).
Fomin--Gelfand--Postnikov prove that $\psi$ maps $e_{i_1,\ldots,i_m}$ to $E_{i_1,\ldots,i_m}^q$ for any $1 \leq m \leq n$. 
From this point they introduce quantum Schubert polynomials as follows.
For $w \in S_n$, we write the Schubert polynomial $\mathfrak{S}_w$ as the unique expansion $\mathfrak{S}_w = \sum c_{i_1,\ldots,i_{n-1}} e_{i_1,\ldots,i_{n-1}}$ with integer coefficients $c_{i_1,\ldots,i_{n-1}}$. 
Then the quantum Schubert polynomial $\mathfrak{S}_w^q$ is defined by
\begin{align*}
\mathfrak{S}_w^q = \sum c_{i_1,\ldots,i_{n-1}} E_{i_1,\ldots,i_{n-1}}^q. 
\end{align*}
In particular, the quantum Schubert polynomial $\mathfrak{S}_w^q$ is the image of the Schubert polynomial $\mathfrak{S}_w$ under the quantization map $\psi$.
Based on this idea, we will define the quantizations of the polynomials $f_{i,j}$ in \eqref{eq:Intro_fij}.

\subsection{Coordinate rings}
We return to Hessenberg varieties.
If $\hh(j) = j+1$ for all $1 \leq j \leq n-1$ and $\hh(n) = n$, then the regular nilpotent Hessenberg variety $\Hess(\matN,\hh)$ is called the Peterson variety. 
We here denote the Peterson variety by $\Pet_n$. 
Peterson found a surprising connection between the coordinate ring for $\Pet_n$ and the quantum cohomology $QH^*(\Fl(\C^n))$ (\cite{Kos, Pet, Rie}).
We now explain this connection.
Let $U$ be the set of lower unipotent matrices: 
\begin{align*}  
U = \left\{ \left.
g=\left(
 \begin{array}{@{\,}ccccc@{\,}}
     1 &  &  &  &  \\
     x_{21} & 1 &  &  &  \\ 
     x_{31} & x_{32} & 1 &  &  \\ 
     \vdots& \vdots & \ddots & \ddots & \\
     x_{n1} & x_{n2} & \cdots & x_{n \, n-1} & 1 
 \end{array}
 \right) \right| x_{ij} \in \C \ (1 \leq j < i \leq n) \right\}. 
\end{align*}
Note that $U$ is also regarded as an open set around the identity element in $\Fl(\C^n)$. 
Peterson discovered that the coordinate ring $\C[\Pet_n \cap U]$ of $\Pet_n \cap U$ is isomorphic to the quantum cohomology ring of $\Fl(\C^n)$: 
\begin{align} \label{eq:Intro_coordinate_ring_Peterson} 
\C[\Pet_n \cap U] \cong QH^*(\Fl(\C^n)) \cong \C[x_1,\ldots,x_n,q_1,\ldots,q_{n-1}]/(E_1^{q \, (n)}, \ldots, E_n^{q \, (n)}).
\end{align}
Recently, the author and Shirato generalize in \cite{HorShi} this isomorphism to regular nilpotent Hessenberg varieties in algebraic sense as follows. 
Let $z_{ij}$ be the coordinate functions on $U$: 
\begin{align*}
z_{ij}(g) = x_{ij} \ \ \ \textrm{for} \ g \in U.
\end{align*} 
The flag represented by $g \in U$ belongs to $\Hess(\matX,\hh)$ if and only if $(g^{-1}\matX g)_{ij} =0$ for all $i > \hh(j)$. 
We set 
\begin{align*} 
\nu_{i,j}(g) = (g^{-1} \matN g)_{ij} \ \ \textrm{and} \ \ \xi_{i,j}(g) = (g^{-1} \matS g)_{ij} \ \ \ \textrm{for} \ g \in U.
\end{align*} 
Then the coordinate rings of $\Hess(\matN,\hh) \cap U$ and $\Hess(\matS,\hh) \cap U$ are given by 
\begin{align*} 
&\C[\Hess(\matN,\hh) \cap U] = \C[z_{ij} \mid 1 \leq j < i \leq n]/(\nu_{i,j} \mid i > \hh(j)); \\ 
&\C[\Hess(\matS,\hh) \cap U] = \C[z_{ij} \mid 1 \leq j < i \leq n]/(\xi_{i,j} \mid i > \hh(j)), 
\end{align*}
where $\Hess(\matN,\hh) \cap U$ and $\Hess(\matS,\hh) \cap U$ stand for the scheme-theoretic intersections in Introduction. 

Consider a generalization $M_n^{q_{rs}}$ (or simply denoted by $M_n$) of the matrix $M_n^q$ defined by \begin{align*}
M_n^{q_{rs}} = M_n = \left(
 \begin{array}{@{\,}ccccc@{\,}}
     x_1 & q_{12} & q_{13} & \cdots & q_{1n} \\
     -1 & x_2 & q_{23} & \cdots & q_{2n} \\ 
      0 & \ddots & \ddots & \ddots & \vdots \\ 
      \vdots & \ddots & -1 & x_{n-1} & q_{n-1 \, n} \\
      0 & \cdots & 0 & -1 & x_n 
 \end{array}
 \right).
\end{align*}
By a similar construction of quantized elementary symmetric polynomials $E_i^{q \, (n)}$, we define $E_i^{(n)}$ by
the coefficient of $\lambda^{n-i}$ multiplied by $(-1)^i$ for the characteristic polynomial in $\lambda$ of the matrix $M_n$ for each $1 \leq i \leq n$. 
We set 
\begin{align*} 
Q_n = \C[x_1,\ldots,x_n,q_{rs} \mid 1 \leq r < s \leq n]/(E_1^{(n)}, \ldots, E_n^{(n)})
\end{align*}
and define the map 
\begin{align*} 
\varphi: \C[z_{ij} \mid 1 \leq j < i \leq n] \to Q_n; \ \ \ z_{ij} \mapsto E_{i-j}^{(n-j)}.
\end{align*}
By \cite{HorShi}, the map $\varphi$ above is an isomorphism of $\C$-algebras and 
\begin{align*}
\varphi^{-1}(q_{rs}) = -\nu_{n+1-r, \, n+1-s} \ \ \ \textrm{for} \ 1 \leq r < s \leq n.
\end{align*}
This implies that for any Hessenberg function $\hh: \{1,2,\ldots,n\} \to \{1,2,\ldots,n\}$, we obtain an isomorphism of $\C$-algebras:
\begin{align} \label{eq:Intro_coordinate_ring_regular_nilpotent} 
\C[\Hess(\matN,\hh) \cap U] \cong \frac{\C[x_1,\ldots,x_n, q_{rs} \mid 2 \leq s \leq n, n-\hh(n+1-s)<r<s]}{({}^{\hh}E_1^{(n)}, \ldots, {}^{\hh}E_n^{(n)})}, 
\end{align} 
where ${}^{\hh}E_i^{(n)}$ is defined by 
\begin{align*}
{}^{\hh}E_i^{(n)} \coloneqq E_i^{(n)}|_{q_{rs}=0 \ (2 \leq s \leq n \ \textrm{and} \ 1 \leq r \leq n-\hh(n+1-s))}. 
\end{align*}
In particular, if $\hh(j) = j+1$ for all $1 \leq j \leq n-1$ and $\hh(n) = n$, then the isomorphism \eqref{eq:Intro_coordinate_ring_regular_nilpotent} leads us to the isomorphism \eqref{eq:Intro_coordinate_ring_Peterson} in algebraic sense. 

\subsection{Main result}
The time is ripe to state our main theorem. 
We quantize the polynomial $f_{i,j}$ defined in \eqref{eq:Intro_fij} by a combinatorial method of Fomin--Gelfand--Postnikov's quantization.
We write the unique expansion $f_{i,j} = \sum c_{i_1,\ldots,i_m} e_{i_1,\ldots,i_m}$ with integer coefficients $c_{i_1,\ldots,i_m}$. 
Then its quantization $F_{i,j}$ is defined by
\begin{align*}
F_{i,j} = \sum c_{i_1,\ldots,i_m} E_{i_1,\ldots,i_m}
\end{align*}
with the notation $E_{i_1,\ldots,i_m} = E_{i_1}^{(1)} \cdots E_{i_m}^{(m)}$.
When we set $q_{rs}=0$ for all $r < s$, the polynomial $F_{i,j}$ specializes to $f_{i,j}$ since $E_i^{(k)}$ specializes to $e_i^{(k)}$. 

\begin{theorem} \label{theorem:Intro_main}
\begin{enumerate}
\item The polynomial $F_{i,j}$ belongs to $\C[x_1,\ldots,x_i, q_{rs} \mid 1 \leq r < s \leq i]$ for all $i \geq j \geq 1$. 
\item The ideal generated by $E_1^{(n)}, \ldots, E_n^{(n)}$ equals the ideal generated by $F_{n,1}, \ldots, F_{n,n}$ in the polynomial ring $\C[x_1,\ldots,x_n,q_{rs} \mid 1 \leq r < s \leq n]$. 
In particular, we have 
\begin{align*}
Q_n = \C[x_1,\ldots,x_n,q_{rs} \mid 1 \leq r < s \leq n]/(F_{n,1}, \ldots, F_{n,n}).
\end{align*}
\item For any $1 \leq j \leq i \leq n-1$, we have 
\begin{align*}
\varphi^{-1}(F_{i,j}) = (-1)^{i-j} \xi_{n+1-j, \, n-i}.
\end{align*}
In particular, for any Hessenberg function $\hh: \{1,2,\ldots,n\} \to \{1,2,\ldots,n\}$, the isomorphism $\varphi$ induces an isomorphism of $\C$-algebras:
\begin{align} \label{eq:Intro_coordinate_ring_regular_semisimple}
\C[\Hess(\matS,\hh) \cap U] \cong \C[x_1,\ldots,x_n,q_{rs} \mid 1 \leq r < s \leq n]/(F_{i,j} \mid i \geq \hh^*(j)). 
\end{align}
Here, $\hh^*$ denotes the dual of $\hh$. $($See \eqref{eq:HessenbergFunctionDual} for the definition.$)$ 
\end{enumerate}
\end{theorem}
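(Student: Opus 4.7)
The plan is to prove (1), (2), (3) roughly in order, using Fomin--Gelfand--Postnikov's basis of $\C[x_1,x_2,\ldots]$ to control which variables occur in $F_{i,j}$, then lifting the classical ideal identity from \cite{AHHM} to the quantized setting, and finally proving the key identity $\varphi^{-1}(F_{i,j})=(-1)^{i-j}\xi_{n+1-j,n-i}$ by induction via a quantized recursive formula for $F_{i,j}$.

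Part (1) is the cleanest. Directly from \eqref{eq:Intro_fij}, $f_{i,j}\in\C[x_1,\ldots,x_i]$. The products $e_{i_1,\ldots,i_m}=e_{i_1}^{(1)}\cdots e_{i_m}^{(m)}$ with $0\le i_k\le k$ and $m\le i$ form a $\C$-basis of $\C[x_1,\ldots,x_i]$, so only terms with $m\le i$ occur in the unique expansion of $f_{i,j}$. Since $E_{i_k}^{(k)}\in\C[x_1,\ldots,x_k,q_{rs}\mid 1\le r<s\le k]$ and here $k\le i$, I conclude $F_{i,j}\in\C[x_1,\ldots,x_i,q_{rs}\mid 1\le r<s\le i]$.

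For (2), the starting point is the classical identity $(f_{n,1},\ldots,f_{n,n})=(e_1^{(n)},\ldots,e_n^{(n)})$ of \cite{AHHM}. Using the quantized recursive formula for $F_{i,j}$ announced in the abstract, one can write each $F_{n,j}$ explicitly as a $\C[x,q]$-combination of $E_1^{(n)},\ldots,E_n^{(n)}$ and conversely. A more conceptual reformulation is the statement that the additive ``quantization'' map $\psi:e_{i_1,\ldots,i_m}\mapsto E_{i_1,\ldots,i_m}$ sends the ideal $(e_1^{(n)},\ldots,e_n^{(n)})$ bijectively onto $(E_1^{(n)},\ldots,E_n^{(n)})$; this is the direct analogue of the corresponding theorem of \cite{FGP}, and their argument (via a suitable filtration and compatibility of the difference operators) adapts to the present setting with all $q_{rs}$.

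For (3), I will induct on $i$. The base case $i=j$ reduces, via $f_{j,j}=e_1^{(j)}$ and thus $F_{j,j}=E_1^{(j)}$, to checking $\varphi(\xi_{n+1-j,n-j})=E_1^{(j)}$, which is a direct matrix-entry computation using the explicit form of $\matS$ from \eqref{eq:regular nilpotent semisimple}. For the inductive step I combine two recursions: a quantized recursion for $F_{i,j}$ obtained by applying $\psi$ to the classical recursion for $f_{i,j}$, and a matrix-entry recursion for $\xi_{n+1-j,n-i}$ obtained by expanding entries of $g^{-1}\matS g$. Under $\varphi$, these two recursions should match term by term using the induction hypothesis and the formula $\varphi^{-1}(q_{rs})=-\nu_{n+1-r,n+1-s}$ already established, which is precisely what makes the off-diagonal $q$-contributions on the $F$-side translate to the nilpotent contributions on the $\xi$-side. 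The ``in particular'' statement then follows: the defining relations $\xi_{r,s}=0$ for $r>\hh(s)$ translate, via $r=n+1-j$ and $s=n-i$, into $F_{i,j}=0$ for $i\ge\hh^*(j)$ by the definition of $\hh^*$ in \eqref{eq:HessenbergFunctionDual}.

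The main obstacle is the inductive step of (3): pinning down the exact quantized recursion for $F_{i,j}$ and matching it on the nose with the matrix-entry recursion for $\xi_{n+1-j,n-i}$, including the interplay between the $q_{rs}$ and the nilpotent part of $\matS$. This is where the bulk of the actual calculation lies.
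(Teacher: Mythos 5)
The proposal has the right overall shape but contains one outright error and leaves the two substantive parts only sketched, so it does not constitute a proof.

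\textbf{Part (1): a false lemma.} You assert that the standard elementary monomials $e_{i_1,\ldots,i_m}$ with $0\le i_k\le k$ and $m\le i$ form a $\C$-basis of $\C[x_1,\ldots,x_i]$. This is not true: that collection is finite (of cardinality $(i+1)!$ after padding with zeros), while $\C[x_1,\ldots,x_i]$ is infinite-dimensional. Concretely, already $(e_1^{(2)})^2=(x_1+x_2)^2\in\C[x_1,x_2]$ has expansion $e_{0,2}+e_{0,1,1}-e_{0,0,2}$, which involves $m=3$. So it is simply false that a polynomial lying in $\C[x_1,\ldots,x_i]$ can only use terms with $m\le i$; this is a special property of the particular polynomials $f_{i,j}$ that must be proved. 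The paper establishes it by deriving an explicit determinant formula for $f_{i,j}$ in the variables $e_k^{(m)}$ with $m\le i$ (Theorem~\ref{theorem:determinant formula for fij}), from which the corresponding determinant formula for $F_{i,j}$ in the $E_k^{(m)}$, $m\le i$, follows immediately (Corollary~\ref{corollary:determinant_formula_for_Fij}); part (1) then reads off. Without some such explicit control, your proof of (1) does not go through.

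\textbf{Part (2): not carried out, and the FGP route is not automatic.} You offer two options without completing either. Your ``more conceptual'' route --- that the additive quantization map $\psi$ sends $(e_1^{(n)},\ldots,e_n^{(n)})$ bijectively onto $(E_1^{(n)},\ldots,E_n^{(n)})$ --- would require reproving the Fomin--Gelfand--Postnikov ideal theorem in the fully general $q_{rs}$ setting, which you only assert ``adapts.'' Even granting that, it only yields $F_{n,j}\in(E_1^{(n)},\ldots,E_n^{(n)})$; since $\psi$ is not a ring map, the bijectivity of $\psi$ on the ideal does not by itself give $E_k^{(n)}\in(F_{n,1},\ldots,F_{n,n})$, so the reverse inclusion still requires an argument. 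The paper avoids all of this: from the determinant formula for $F_{n,j}$, a single cofactor expansion shows $F_{n,j}\equiv(-1)^{n-j}(n-j+1)E_{n-j+1}^{(n)}\pmod{(E_1^{(n)},\ldots,E_{n-j}^{(n)})}$, which produces both inclusions at once by descending induction on $j$.

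\textbf{Part (3): a genuinely different strategy, but unfinished.} Your plan --- induct on $i$, matching a quantized recursion for $F_{i,j}$ against a matrix-entry recursion for $\xi_{n+1-j,n-i}$ via $\varphi^{-1}(q_{rs})=-\nu_{n+1-r,n+1-s}$ --- is structurally plausible and is indeed different from the paper's argument, which is non-inductive: it computes $\xi_{n+1-j,n-i}$ directly as a determinant in the $z_{ij}$ (Lemma~\ref{lemma:Sn-i-1}, after shifting $\matS$ by a scalar so that the off-diagonal entries agree), writes $\varphi^{-1}(F_{i,j})$ as a determinant in the $z_{ij}$ using Corollary~\ref{corollary:determinant_formula_for_Fij}, and matches the two by transposing and conjugating by $w_0$. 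You explicitly defer the matching of the two recursions, calling it ``the bulk of the actual calculation'' --- but that is precisely the content of (3), so as written the proposal does not prove it. Note also that the paper's quantized recursion (Theorem~\ref{theorem:quantized recursive formula for Fij}) is itself derived \emph{from} the determinant formula, so if you want the inductive route you would still essentially need that formula as input. Your translation of $\xi_{r,s}=0$ for $r>\hh(s)$ into $F_{i,j}=0$ for $i\ge\hh^*(j)$ is correct and matches the paper.

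In summary: the conclusion of (1) is true but your justification rests on a false basis claim; (2) and (3) are outlines whose key steps are not executed. The paper's determinant formulas (Theorem~\ref{theorem:determinant formula for fij}, Corollary~\ref{corollary:determinant_formula_for_Fij}, Lemma~\ref{lemma:Sn-i-1}) are the engine behind all three parts, and some explicit substitute for them would be needed to make your proposal work.
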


Theorem~\ref{theorem:Intro_main} gives rise to an unexpected relation of the coordinate ring $\C[\Hess(\matS,\hh) \cap U]$ to the cohomology ring $H^*(\Hess(\matN,\hh))$.

\begin{corollary} \label{corollary:Intro_main}
Let $\hh: \{1,2,\ldots,n\} \to \{1,2,\ldots,n\}$ be a Hessenberg function. 
Then there is an isomorphism of $\C$-algebras:
\begin{align*} 
\C[\Hess(\matS,\hh) \cap U]/(\nu_{i,j} \mid i>j) \cong H^*(\Hess(\matN,\hh)) \cong H^*(\Hess(\matS,\hh))^{S_n}.
\end{align*}
\end{corollary}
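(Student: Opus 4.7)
The second isomorphism $H^*(\Hess(\matN,\hh)) \cong H^*(\Hess(\matS,\hh))^{S_n}$ is exactly the theorem of Abe--Harada--Horiguchi--Masuda recalled in the introduction, so my plan focuses entirely on the first isomorphism. The strategy is to push the presentation of $\C[\Hess(\matS,\hh)\cap U]$ supplied by Theorem~\ref{theorem:Intro_main}(3) through the specialization $q_{rs}=0$ and then match the result to the presentation of $H^*(\Hess(\matN,\hh))$ quoted in the introduction.

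First I would transport the quotient to the $(x,q)$-side via $\varphi$. By Theorem~\ref{theorem:Intro_main}(3),
\[
\C[\Hess(\matS,\hh)\cap U] \;\cong\; \C[x_1,\ldots,x_n,\,q_{rs}\mid 1\leq r<s\leq n]/(F_{i,j}\mid i\geq \hh^*(j)).
\]
The identity $\varphi^{-1}(q_{rs}) = -\nu_{n+1-r,\,n+1-s}$ from \cite{HorShi} sets up a bijection between the pairs $r<s$ and the pairs $i>j$, so the ideal $(\nu_{i,j}\mid i>j)$ corresponds exactly to $(q_{rs}\mid 1\leq r<s\leq n)$ under $\varphi$. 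Passing to the further quotient by the latter specializes every $F_{i,j}$ to the classical polynomial $f_{i,j}$, as remarked just before the statement of Theorem~\ref{theorem:Intro_main}. Hence
\[
\C[\Hess(\matS,\hh)\cap U]/(\nu_{i,j}\mid i>j) \;\cong\; \C[x_1,\ldots,x_n]/(f_{i,j}\mid i\geq \hh^*(j)).
\]

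The remaining step, which I expect to be the main obstacle, is to identify this quotient with $H^*(\Hess(\matN,\hh)) \cong \C[x_1,\ldots,x_n]/(f_{i,j}\mid i\geq \hh(j))$, that is, to prove the ideal equality
\[
(f_{i,j}\mid i\geq \hh(j)) \;=\; (f_{i,j}\mid i\geq \hh^*(j))
\]
in $\C[x_1,\ldots,x_n]$. My approach is to show that both ideals collapse to the non-redundant ideal $(f_{\hh(k),k}\mid 1\leq k\leq n)$ by iterating the classical recursion for $f_{i,j}$. One direction is exactly the reduction already invoked in the introduction. For the other I would use the definition of $\hh^*$ from \eqref{eq:HessenbergFunctionDual} together with an induction on $i-\hh^*(j)$, expressing each generator $f_{i,j}$ with $i\geq \hh^*(j)$ as a $\C[x_1,\ldots,x_n]$-combination of the $f_{\hh(k),k}$. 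The combinatorial interplay between the recursion and the duality $\hh\leftrightarrow\hh^*$ is the delicate point; once the ideal equality is in hand, the chain of isomorphisms assembles immediately, and the second isomorphism of the corollary follows by citing AHHM.
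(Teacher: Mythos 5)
The first two reductions you perform (transporting through $\varphi$, identifying $(\nu_{i,j}\mid i>j)$ with $(q_{rs}\mid r<s)$, and specializing $F_{i,j}$ to $f_{i,j}$) are correct and match the paper. The gap is in your final step: the proposed ideal equality
\[
(f_{i,j}\mid i\geq \hh(j)) \;=\; (f_{i,j}\mid i\geq \hh^*(j))
\]
in $\C[x_1,\ldots,x_n]$ is false in general, so your plan of ``collapsing both to $(f_{\hh(k),k}\mid k)$'' cannot work. By the recursion \eqref{eq:recursive_formula_fij} the second ideal collapses to $(f_{\hh^*(k),k}\mid k)$, not $(f_{\hh(k),k}\mid k)$, and these are genuinely different ideals. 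For a concrete counterexample take $n=5$, $\hh=(3,4,4,5,5)$, so $\hh^*=(2,4,5,5,5)$. The ideal $(f_{i,j}\mid i\geq\hh^*(j))$ contains $f_{2,1}=(x_1-x_2)x_1$, of degree $4$. The only generators of $(f_{i,j}\mid i\geq\hh(j))=(f_{3,1},f_{4,2},f_{4,3},f_{5,4},f_{5,5})$ of degree $\leq 4$ are $f_{4,3}$, $f_{5,4}$ (degree $4$) and $f_{5,5}$ (degree $2$), and a short coefficient check shows $f_{2,1}$ cannot be written as $c_1 f_{4,3}+c_2 f_{5,4}+g f_{5,5}$ with $c_1,c_2\in\C$, $g$ linear: comparing coefficients of $x_5^2$, $x_4x_5$, $x_4^2$, $x_3x_4$, $x_3^2$ forces $c_1=c_2=0$, and $f_{2,1}$ is not divisible by $f_{5,5}=x_1+\cdots+x_5$.

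The paper avoids this issue entirely. After reaching $\C[x_1,\ldots,x_n]/(f_{i,j}\mid i\geq\hh^*(j))$, it invokes \eqref{eq:cohomology_Hess(N,h)_2} applied to $\hh^*$ to recognize this quotient as $H^*(\Hess(\matN,\hh^*))$, and then appeals to Lemma~\ref{lemma:dual_homeo}, which constructs an explicit homeomorphism $\Hess(\matN,\hh)\cong\Hess(\matN,\hh^*)$ (via $uT\mapsto(w_0\bar u w_0)T$ on $\U(n)/T$), giving an abstract isomorphism of graded $\C$-algebras $H^*(\Hess(\matN,\hh))\cong H^*(\Hess(\matN,\hh^*))$. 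Crucially, this isomorphism does \emph{not} fix the $x_i$'s --- it is induced by a nontrivial self-map of the flag variety --- which is precisely why the two ideals need not coincide even though the quotients are isomorphic. To repair your argument, replace the (false) ideal-equality step by exactly this chain: identify the quotient with $H^*(\Hess(\matN,\hh^*))$ and then cite Lemma~\ref{lemma:dual_homeo}.
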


The results \eqref{eq:Intro_coordinate_ring_regular_nilpotent}, \eqref{eq:Intro_coordinate_ring_regular_semisimple}, and Corollary~\ref{corollary:Intro_main} are summarized in the diagram below.
\begin{align*}
\xymatrix{
& Q_n \ar[ld]_{q_{rs}=0 \atop \hspace{-70pt} (2 \leq s \leq n, \, 1 \leq r \leq n-\hh(n+1-s))} \ar@/_38pt/[rdd]_{F_{i,j}=0 \ (1 \leq i \leq n-1, \, i \geq \hh^*(j)) \atop q_{rs}=0 \ (1 \leq r < s \leq n)} \ar[rd]^{\hspace{-30pt} F_{i,j}=0 \atop \hspace{15pt} (1 \leq i \leq n-1, \, i \geq \hh^*(j))} & \\
\C[\Hess(\matN,\hh) \cap U] & & \C[\Hess(\matS,\hh) \cap U] \ar[d]^{\nu_{i,j}=0 \ (1 \leq j < i \leq n)} \\
& & H^*(\Hess(\matN,\hh)) 
}
\end{align*}

The polynomials $f_{i,j}$ are originally defined by a recursive formula in \cite{AHHM}. (See \eqref{eq:recursive_formula_fij}.)
We also give the recursive formula for $F_{i,j}$.

\begin{theorem}[Quantized recursive formula for $F_{i,j}$] \label{theorem:Intro_quantized recursive formula for Fij}
The polynomials $F_{i,j} \ (i \geq j \geq 1)$ satisfy the following recursive formula:
\begin{align*} 
\begin{split}
F_{j,j} &= x_1+ \cdots + x_j \ \ \ \textrm{$($the base case$)$;} \\
F_{i,j} &= F_{i-1,j-1} + \left( x_j \, F_{i-1,j} + \sum_{k=1}^{i-j-1} (-1)^k q_{j \, j+k} \, F_{i-1,j+k} \right) \\
& \hspace{25pt} - \left( x_i \, F_{i-1,j} + \sum_{k=1}^{i-j-1} (-1)^k q_{i-k \, i} \, F_{i-k-1,j} \right) +(-1)^{i-j}(i-j+1) q_{ji} \ \ \ \textrm{for} \ i>j.
\end{split}
\end{align*}
Here, we take the convention $F_{*,0}=0$ for arbitrary $*$, and $\sum_{k=1}^{i-j-1} (-1)^k q_{j \, j+k} \, F_{i-1,j+k} = \sum_{k=1}^{i-j-1} (-1)^k q_{i-k \, i} \, F_{i-k-1,j} =0$ whenever $i=j+1$.
\end{theorem}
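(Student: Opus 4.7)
The plan is to induct on the difference $i-j \geq 0$. For the base case $i = j$, the defining formula \eqref{eq:Intro_fij} gives $f_{j,j} = \sum_{k=1}^{j} x_k$, since the product over the empty range $\ell = j+1,\ldots,j$ equals $1$. In the additive basis $\{e_{i_1,\ldots,i_m}\}$ this polynomial equals $e_1^{(j)} = e_0^{(1)} e_0^{(2)} \cdots e_0^{(j-1)} e_1^{(j)}$, so by the very definition of the quantization $F_{j,j} = E_1^{(j)}$. Expanding $\det(\lambda I_j - M_j)$ as in Section~\ref{section:introduction} identifies the coefficient of $\lambda^{j-1}$ with minus the trace of $M_j$, yielding $E_1^{(j)} = x_1 + \cdots + x_j$. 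For the inductive step I start from the classical recursion
\[
f_{i,j} = f_{i-1,j-1} + (x_j - x_i)\, f_{i-1,j},
\]
which is \eqref{eq:recursive_formula_fij} and can be rechecked directly from \eqref{eq:Intro_fij} by splitting the factor $x_k - x_i$ off the leading product (treating $k<j$ and $k=j$ separately). Since $\psi$ is linear on the basis $\{e_{i_1,\ldots,i_m}\}$ and sends $f_{a,b}$ to $F_{a,b}$ by definition, applying $\psi$ yields
\[
F_{i,j} = F_{i-1,j-1} + \psi\bigl(x_j f_{i-1,j}\bigr) - \psi\bigl(x_i f_{i-1,j}\bigr).
\]

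Matching this against the claimed recursion reduces the proof to establishing the two ``quantum multiplication'' identities
\begin{align*}
\psi\bigl(x_j f_{i-1,j}\bigr) &= x_j F_{i-1,j} + \sum_{k=1}^{i-j-1} (-1)^k q_{j,\,j+k}\, F_{i-1,j+k} + \alpha,\\
\psi\bigl(x_i f_{i-1,j}\bigr) &= x_i F_{i-1,j} + \sum_{k=1}^{i-j-1} (-1)^k q_{i-k,\,i}\, F_{i-k-1,j} + \beta,
\end{align*}
with the discrepancy $\alpha - \beta = (-1)^{i-j}(i-j+1)\, q_{ji}$. Since $\psi$ is not a ring homomorphism, each $\psi(x_r f_{i-1,j})$ deviates from $x_r F_{i-1,j}$ by $q$-dependent corrections dictated by Fomin--Gelfand--Postnikov's shift operators, equivalently by the cofactor expansion of $\det(\lambda I_n - M_n)$ along its $j$-th (respectively $i$-th) column. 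I would expand $f_{i-1,j}$ in the $e_I$-basis, multiply by $x_j$ (respectively $x_i$), straighten the product back into the basis, and then apply $\psi$ term-by-term. The sum $\sum_k (-1)^k q_{j,\,j+k} F_{i-1,j+k}$ should arise from interactions of $x_j$ with the entries $q_{j,\,j+k}$ to the right of the $j$-th column of $M_n$, while $\sum_k (-1)^k q_{i-k,\,i} F_{i-k-1,j}$ arises from interactions of $x_i$ with entries $q_{i-k,\,i}$ above the $i$-th row; the residual $(-1)^{i-j}(i-j+1)\, q_{ji}$ is the boundary remainder.

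The hard part will be pinning down the exact combinatorial coefficients of these quantum corrections, especially the multiplicity $i-j+1$ of $q_{ji}$ in the final term. As a safer alternative I would pursue the following route in parallel: for any $n \geq i+1$ the claimed identity is a polynomial identity in $\C[x_1,\ldots,x_n, q_{rs} \mid 1 \leq r < s \leq n]$, and the isomorphism $\varphi^{-1}$ from part~(3) of Theorem~\ref{theorem:Intro_main} translates it into a recursion for the matrix entries $\xi_{n+1-j,\,n-i}(g) = (g^{-1}\matS g)_{n+1-j,\,n-i}$ of the conjugation of $\matS$ by a unipotent lower-triangular $g \in U$. This matrix-entry identity is a concrete linear-algebra statement that should be verifiable by direct manipulation of $g$, $g^{-1}$, and $\matS$, sidestepping the delicate bookkeeping with the quantization map.
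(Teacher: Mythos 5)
Your framework is sound at the outset: the base case $F_{j,j}=E_1^{(j)}=x_1+\cdots+x_j$ is correct, and since the quantization is a $\C$-linear map on the standard elementary monomial basis sending $f_{a,b}\mapsto F_{a,b}$, applying it to the classical recursion does reduce the theorem to computing the quantization of $(x_j-x_i)f_{i-1,j}$. However, there is a genuine gap: the ``quantum multiplication'' identities you display for $\psi(x_j f_{i-1,j})$ and $\psi(x_i f_{i-1,j})$ are precisely the hard content of the theorem, and you only sketch why they should hold (``interactions of $x_j$ with entries to the right of the $j$-th column,'' ``interactions of $x_i$ with entries above the $i$-th row''). In particular, the coefficient $(-1)^{i-j}(i-j+1)$ of $q_{ji}$ is nontrivial, and nothing in the sketch establishes it. A further caution: Fomin--Gelfand--Postnikov's difference-operator quantization map $\psi$ is set up for the variables $q_1,\ldots,q_{n-1}$, not for the general $q_{rs}$; in this paper the quantization for $q_{rs}$ is defined directly by substitution in the standard elementary monomial basis, so one cannot borrow FGP's shift-operator machinery (or their quantum Monk-type straightening rules) off the shelf. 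Your proposed fallback via $\varphi^{-1}$ and the matrix entries $\xi_{n+1-j,\,n-i}$ is plausible but likewise left unexecuted.

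The paper's own proof takes a different and more tractable route: it first establishes (Corollary~\ref{corollary:determinant_formula_for_Fij}) a determinant formula $F_{i,j}=\det\big(E^{(\cdot)}_\cdot\big)$ with entries the $q_{rs}$-quantized elementary symmetric polynomials $E_k^{(m)}$, then performs elementary row operations on that determinant using the recursion~\eqref{eq:recursive quantized elementary symmetric polynomials} for the $E_k^{(m)}$. This decomposes $F_{i,j}$ into three determinants $A$, $B$, $C_k$, each of which is identified with the claimed terms ($F_{i-1,j-1}$ plus the $x_j$-corrections, $-x_i F_{i-1,j}$, and the $q_{i-k,i}F_{i-k-1,j}$ pieces) by further cofactor expansions and Corollary~\ref{corollary:determinant_formula_for_Fij}. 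The coefficient $(-1)^{i-j}(i-j+1)$ of $q_{ji}$ drops out automatically from the $(1,1)$-splitting $E_1^{(j)}=E_1^{(j-1)}+x_j$ and the final cofactor expansion. If you want to complete your approach, the missing step is to prove the two quantum multiplication identities, and the cleanest way to do so is exactly to appeal to the determinant formula for $F_{i,j}$ rather than to straighten products in the $e_I$-basis by hand.
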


In the classical limit $q_{rs}= 0$ for all $r <s$, the quantized recursive formula for $F_{i,j}$ becomes the recursive formula for $f_{i,j}$.

We present below the outline of the paper.
After reviewing the definitions and basic properties for Hessenberg varieties and Hessenberg schemes in Section~\ref{section:Hessenberg varieties and schemes}, we explain the necessary background of the cohomology rings of regular nilpotent Hessenberg varieties $\Hess(\matN,\hh)$ and the coordinate rings of $\Hess(\matN,\hh) \cap U$ in Section~\ref{sect:cohomology rings of regular nilpotent Hessenberg varieties} and 
Section~\ref{sect:quantization and coordinate rings}, respectively. 
In Section~\ref{sect:the main theorem}, we state the main theorem in this paper.
We study the polynomials $f_{i,j}$ and their quantizations $F_{i,j}$ in Section~\ref{sect:deterinant formula for fij}. 
We particularly derive a determinant formula for $f_{i,j}$ that yields the expansion of $f_{i,j}$ in the basis $e_{i_1,\ldots,i_m}$. 
This allows us to derive a determinant formula for $F_{i,j}$. 
Section~\ref{sect:proof of main theorem} is devoted to the proof of the main theorem.
We finally provide the quantized recursive formula for $F_{i,j}$ in Section~\ref{sect:quantized recursive formula for Fij}.

\bigskip
\noindent \textbf{Acknowledgements.} 
The author is supported in part by JSPS KAKENHI Grant-in-Aid for Early-Career Scientists: 23K12981.

\section{Hessenberg varieties and Hessenberg schemes}
\label{section:Hessenberg varieties and schemes}

\subsection{Hessenberg varieties}
We begin with the definition of flag varieties in type $A$. 
Throughout this paper, we frequently write
\begin{align*}
[n] \coloneqq \{1,2,\ldots,n\}
\end{align*}
for a positive integer $n$.
The flag variety $\Fl(\C^n)$ in type $A_{n-1}$ is the set of nested complex linear subspaces $V_\bullet \coloneqq (V_1 \subset V_2 \subset \dots \subset V_n = \C^n)$ of $\C^n$  where each $V_i$ is an $i$-dimensional complex subspace of $\C^n$.
A function $\hh: [n] \rightarrow [n]$ is a \emph{Hessenberg function} if it satisfies the following two conditions 
\begin{align*}
&\hh(j) \geq j \ \textrm{for all} \ j \in [n]; \\
&\hh(1) \leq \hh(2) \leq \dots \leq \hh(n).
\end{align*} 
Note that $\hh(n)=n$ by definition.
We denote a Hessenberg function $\hh$ by listing its values in sequence as follows
\begin{align*}
\hh=(\hh(1), \hh(2), \ldots, \hh(n)=n).
\end{align*}
For a linear operator $\matX: \C^n \to \C^n$ and a Hessenberg function $\hh: [n] \rightarrow [n]$, the associated \emph{Hessenberg variety} $\Hess(\matX,\hh)$ is defined as follows
\begin{align*} 
\Hess(\matX,\hh) \coloneqq \{ V_\bullet \in \Fl(\C^n) \mid \matX V_i \subset V_{\hh(i)} \ \textrm{for all} \ i \in [n] \}. 
\end{align*}
Note that if $\hh=(n,n,\ldots,n)$, then $\Hess(\matX,\hh)=\Fl(\C^n)$.

Let $\gl_n(\C)$ be the set of $n \times n$ matrices and $\b$ the set of upper triangular matrices in $\gl_n(\C)$.
A subspace $\HessSp \subset \gl_n(\C)$ is a \emph{Hessenberg space} if $\HessSp$ contains $\b$ and 
$\HessSp$ is stable under the adjoint action of $\b$.
To each Hessenberg function $\hh: [n] \to [n]$, we can assign a Hessenberg space by
\begin{align} \label{eq:Hessenberg space}
\HessSp = \{(a_{ij})_{i,j \in [n]} \in \gl_n(\C) \mid a_{ij} = 0 \ \textrm{if} \ i > \hh(j) \}.
\end{align}
We call $\HessSp$ in \eqref{eq:Hessenberg space} the \emph{Hessenberg space associated with} $\hh$. 
One can verify that this correspondence gives one-to-one correspondence between the set of Hessenberg functions and the set of Hessenberg spaces.
For this reason, a Hessenberg function $\hh$ is pictorially regarded as shaded boxes in a configuration of boxes on a square grid of size $n \times n$, where the shaded boxes consist of boxes in the $i$-th row and the $j$-th column with $i \leq \hh(j)$ for $(i, j) \in [n] \times [n]$.

\begin{example} \label{example:Hess_func_(3,4,4,5,5)}
Consider $n=5$ and a Hessenberg function $\hh=(3,4,4,5,5)$. 
The corresponding Hessenberg space $\HessSp$ is given by 
\begin{align*}
\HessSp=\left\{\begin{pmatrix}
a_{11} & a_{12} & a_{13} & a_{14} & a_{15} \\
a_{21} & a_{22} & a_{23} & a_{24} & a_{25} \\
a_{31} & a_{32} & a_{33} & a_{34} & a_{35} \\
0 & a_{42} & a_{43} & a_{44} & a_{45} \\
0 & 0 & 0 & a_{54} & a_{55} \\
\end{pmatrix} \middle| a_{ij} \in \C \ \textrm{for} \ i \leq \hh(j) \right\}.
\end{align*}
Also, the configuration of the shaded boxes for $\hh$ is shown in Figure~\ref{picture:Hessenberg_function}.
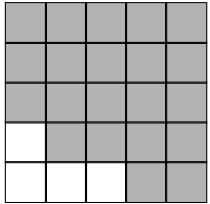
\begin{figure}[h]
\begin{center}
\begin{picture}(75,75)
\put(0,63){\colorbox{gray}}
\put(0,67){\colorbox{gray}}
\put(0,72){\colorbox{gray}}
\put(4,63){\colorbox{gray}}
\put(4,67){\colorbox{gray}}
\put(4,72){\colorbox{gray}}
\put(9,63){\colorbox{gray}}
\put(9,67){\colorbox{gray}}
\put(9,72){\colorbox{gray}}

\put(15,63){\colorbox{gray}}
\put(15,67){\colorbox{gray}}
\put(15,72){\colorbox{gray}}
\put(19,63){\colorbox{gray}}
\put(19,67){\colorbox{gray}}
\put(19,72){\colorbox{gray}}
\put(24,63){\colorbox{gray}}
\put(24,67){\colorbox{gray}}
\put(24,72){\colorbox{gray}}

\put(30,63){\colorbox{gray}}
\put(30,67){\colorbox{gray}}
\put(30,72){\colorbox{gray}}
\put(34,63){\colorbox{gray}}
\put(34,67){\colorbox{gray}}
\put(34,72){\colorbox{gray}}
\put(39,63){\colorbox{gray}}
\put(39,67){\colorbox{gray}}
\put(39,72){\colorbox{gray}}

\put(45,63){\colorbox{gray}}
\put(45,67){\colorbox{gray}}
\put(45,72){\colorbox{gray}}
\put(49,63){\colorbox{gray}}
\put(49,67){\colorbox{gray}}
\put(49,72){\colorbox{gray}}
\put(54,63){\colorbox{gray}}
\put(54,67){\colorbox{gray}}
\put(54,72){\colorbox{gray}}

\put(60,63){\colorbox{gray}}
\put(60,67){\colorbox{gray}}
\put(60,72){\colorbox{gray}}
\put(64,63){\colorbox{gray}}
\put(64,67){\colorbox{gray}}
\put(64,72){\colorbox{gray}}
\put(69,63){\colorbox{gray}}
\put(69,67){\colorbox{gray}}
\put(69,72){\colorbox{gray}}

\put(0,48){\colorbox{gray}}
\put(0,52){\colorbox{gray}}
\put(0,57){\colorbox{gray}}
\put(4,48){\colorbox{gray}}
\put(4,52){\colorbox{gray}}
\put(4,57){\colorbox{gray}}
\put(9,48){\colorbox{gray}}
\put(9,52){\colorbox{gray}}
\put(9,57){\colorbox{gray}}

\put(15,48){\colorbox{gray}}
\put(15,52){\colorbox{gray}}
\put(15,57){\colorbox{gray}}
\put(19,48){\colorbox{gray}}
\put(19,52){\colorbox{gray}}
\put(19,57){\colorbox{gray}}
\put(24,48){\colorbox{gray}}
\put(24,52){\colorbox{gray}}
\put(24,57){\colorbox{gray}}

\put(30,48){\colorbox{gray}}
\put(30,52){\colorbox{gray}}
\put(30,57){\colorbox{gray}}
\put(34,48){\colorbox{gray}}
\put(34,52){\colorbox{gray}}
\put(34,57){\colorbox{gray}}
\put(39,48){\colorbox{gray}}
\put(39,52){\colorbox{gray}}
\put(39,57){\colorbox{gray}}

\put(45,48){\colorbox{gray}}
\put(45,52){\colorbox{gray}}
\put(45,57){\colorbox{gray}}
\put(49,48){\colorbox{gray}}
\put(49,52){\colorbox{gray}}
\put(49,57){\colorbox{gray}}
\put(54,48){\colorbox{gray}}
\put(54,52){\colorbox{gray}}
\put(54,57){\colorbox{gray}}

\put(60,48){\colorbox{gray}}
\put(60,52){\colorbox{gray}}
\put(60,57){\colorbox{gray}}
\put(64,48){\colorbox{gray}}
\put(64,52){\colorbox{gray}}
\put(64,57){\colorbox{gray}}
\put(69,48){\colorbox{gray}}
\put(69,52){\colorbox{gray}}
\put(69,57){\colorbox{gray}}

\put(0,33){\colorbox{gray}}
\put(0,37){\colorbox{gray}}
\put(0,42){\colorbox{gray}}
\put(4,33){\colorbox{gray}}
\put(4,37){\colorbox{gray}}
\put(4,42){\colorbox{gray}}
\put(9,33){\colorbox{gray}}
\put(9,37){\colorbox{gray}}
\put(9,42){\colorbox{gray}}

\put(15,33){\colorbox{gray}}
\put(15,37){\colorbox{gray}}
\put(15,42){\colorbox{gray}}
\put(19,33){\colorbox{gray}}
\put(19,37){\colorbox{gray}}
\put(19,42){\colorbox{gray}}
\put(24,33){\colorbox{gray}}
\put(24,37){\colorbox{gray}}
\put(24,42){\colorbox{gray}}

\put(30,33){\colorbox{gray}}
\put(30,37){\colorbox{gray}}
\put(30,42){\colorbox{gray}}
\put(34,33){\colorbox{gray}}
\put(34,37){\colorbox{gray}}
\put(34,42){\colorbox{gray}}
\put(39,33){\colorbox{gray}}
\put(39,37){\colorbox{gray}}
\put(39,42){\colorbox{gray}}

\put(45,33){\colorbox{gray}}
\put(45,37){\colorbox{gray}}
\put(45,42){\colorbox{gray}}
\put(49,33){\colorbox{gray}}
\put(49,37){\colorbox{gray}}
\put(49,42){\colorbox{gray}}
\put(54,33){\colorbox{gray}}
\put(54,37){\colorbox{gray}}
\put(54,42){\colorbox{gray}}

\put(60,33){\colorbox{gray}}
\put(60,37){\colorbox{gray}}
\put(60,42){\colorbox{gray}}
\put(64,33){\colorbox{gray}}
\put(64,37){\colorbox{gray}}
\put(64,42){\colorbox{gray}}
\put(69,33){\colorbox{gray}}
\put(69,37){\colorbox{gray}}
\put(69,42){\colorbox{gray}}


\put(15,18){\colorbox{gray}}
\put(15,22){\colorbox{gray}}
\put(15,27){\colorbox{gray}}
\put(19,18){\colorbox{gray}}
\put(19,22){\colorbox{gray}}
\put(19,27){\colorbox{gray}}
\put(24,18){\colorbox{gray}}
\put(24,22){\colorbox{gray}}
\put(24,27){\colorbox{gray}}

\put(30,18){\colorbox{gray}}
\put(30,22){\colorbox{gray}}
\put(30,27){\colorbox{gray}}
\put(34,18){\colorbox{gray}}
\put(34,22){\colorbox{gray}}
\put(34,27){\colorbox{gray}}
\put(39,18){\colorbox{gray}}
\put(39,22){\colorbox{gray}}
\put(39,27){\colorbox{gray}}

\put(45,18){\colorbox{gray}}
\put(45,22){\colorbox{gray}}
\put(45,27){\colorbox{gray}}
\put(49,18){\colorbox{gray}}
\put(49,22){\colorbox{gray}}
\put(49,27){\colorbox{gray}}
\put(54,18){\colorbox{gray}}
\put(54,22){\colorbox{gray}}
\put(54,27){\colorbox{gray}}

\put(60,18){\colorbox{gray}}
\put(60,22){\colorbox{gray}}
\put(60,27){\colorbox{gray}}
\put(64,18){\colorbox{gray}}
\put(64,22){\colorbox{gray}}
\put(64,27){\colorbox{gray}}
\put(69,18){\colorbox{gray}}
\put(69,22){\colorbox{gray}}
\put(69,27){\colorbox{gray}}




\put(45,3){\colorbox{gray}}
\put(45,7){\colorbox{gray}}
\put(45,12){\colorbox{gray}}
\put(49,3){\colorbox{gray}}
\put(49,7){\colorbox{gray}}
\put(49,12){\colorbox{gray}}
\put(54,3){\colorbox{gray}}
\put(54,7){\colorbox{gray}}
\put(54,12){\colorbox{gray}}

\put(60,3){\colorbox{gray}}
\put(60,7){\colorbox{gray}}
\put(60,12){\colorbox{gray}}
\put(64,3){\colorbox{gray}}
\put(64,7){\colorbox{gray}}
\put(64,12){\colorbox{gray}}
\put(69,3){\colorbox{gray}}
\put(69,7){\colorbox{gray}}
\put(69,12){\colorbox{gray}}

\put(0,0){\framebox(15,15)}
\put(15,0){\framebox(15,15)}
\put(30,0){\framebox(15,15)}
\put(45,0){\framebox(15,15)}
\put(60,0){\framebox(15,15)}
\put(0,15){\framebox(15,15)}
\put(15,15){\framebox(15,15)}
\put(30,15){\framebox(15,15)}
\put(45,15){\framebox(15,15)}
\put(60,15){\framebox(15,15)}
\put(0,30){\framebox(15,15)}
\put(15,30){\framebox(15,15)}
\put(30,30){\framebox(15,15)}
\put(45,30){\framebox(15,15)}
\put(60,30){\framebox(15,15)}
\put(0,45){\framebox(15,15)}
\put(15,45){\framebox(15,15)}
\put(30,45){\framebox(15,15)}
\put(45,45){\framebox(15,15)}
\put(60,45){\framebox(15,15)}
\put(0,60){\framebox(15,15)}
\put(15,60){\framebox(15,15)}
\put(30,60){\framebox(15,15)}
\put(45,60){\framebox(15,15)}
\put(60,60){\framebox(15,15)}
\end{picture}
\end{center}
\vspace{-10pt}
\caption{The configuration corresponding to $\hh=(3,4,4,5,5)$.}
\label{picture:Hessenberg_function}
\end{figure}
\end{example}

Let $B$ be the set of upper triangular matrices in the general linear group $\GL_n(\C)$. 
As is well-known, the flag variety $\Fl(\C^n)$ is identified with $\GL_n(\C)/B$ by sending $gB \in \GL_n(\C)/B$ to the flag $V_\bullet \in \Fl(\C^n)$ such that $V_j$ is generated by the first $j$ column vectors of $g$ for any $j \in [n]$.
Under this identification, the Hessenberg variety $\Hess(\matX,\hh)$ is rewritten as follows
\begin{align} \label{eq:Hessenberg_variety}
\Hess(\matX,\HessSp) = \{ gB \in \GL_n(\C)/B \mid g^{-1} \matX g \in \HessSp \},
\end{align}
where $\HessSp$ is the Hessenberg space associated with $\hh$. 
Note that for any $p \in \GL_n(\C)$, the assignment $gB \mapsto p^{-1}gB$ gives an isomorphism $\Hess(\matX,\HessSp) \cong \Hess(p^{-1}\matX p,\HessSp)$. 
Thus, we may assume that $\matX$ in $\Hess(\matX,\HessSp)$ is in Jordan canonical form. 
Recalling that the flag variety $\GL_n(\C)/B$ is covered by affine coordinate patches, each isomorphic to $\C^{\frac{1}{2}n(n-1)}$, the defining equations of the Hessenberg variety $\Hess(\matX,\HessSp)$ is locally given by $(g^{-1} \matX g)_{ij} = 0$ for all $i>\hh(j)$ on each affine coordinate chart.
To be more precise, we set 
\begin{align} \label{eq:open set U} 
U = \left\{ \left.
g=\left(
 \begin{array}{@{\,}ccccc@{\,}}
     1 &  &  &  &  \\
     x_{21} & 1 &  &  &  \\ 
     x_{31} & x_{32} & 1 &  &  \\ 
     \vdots& \vdots & \ddots & \ddots & \\
     x_{n1} & x_{n2} & \cdots & x_{n \, n-1} & 1 
 \end{array}
 \right) \right| x_{ij} \in \C \ (1 \leq j < i \leq n) \right\} \cong \C^{\frac{1}{2}n(n-1)}. 
\end{align}
Then the map $U \to \GL_n(\C)/B; g \mapsto gB$ is an open embedding. 
By abuse of notation, we also denote by $U$ its image in $\GL_n(\C)/B$.
Let $S_n$ be the symmetric group on $[n]$.
Then the set of translates $wU$ of $U$ by $w \in S_n$ forms an open cover of $\GL_n(\C)/B$ where the permutation $w \in S_n$ is identified with the permutation matrix $w=(\delta_{i,w(j)})_{i,j}$. 
Note that $\delta_{i,j}$ denotes the Kronecker delta.
By the definition \eqref{eq:Hessenberg_variety},  the defining equations of the Hessenberg variety $\Hess(\matX,\HessSp)$ on $wU$ is given by $(g^{-1} \matX g)_{ij} = 0$ for all $i>h(j)$ where $g \in wU$. 

\subsection{Hessenberg schemes}
We can reconsider Hessenberg varieties as schemes. 
We now briefly recount. 
Let $\HessSp$ be a Hessenberg space.
Since $\HessSp$ is stable under the adjoint action of $B$, this induces a $B$-action on the quotient space $\gl_n(\C)/\HessSp$. 
Let $\overline{\matX}$ be the image of $\matX \in \gl_n(\C)$ under the surjection $\gl_n(\C) \rightarrow \gl_n(\C)/\HessSp$. 
Fix $\matX \in \gl_n(\C)$ and we define a section $s_\matX$ of the vector bundle $\GL_n(\C) \times_B (\gl_n(\C)/\HessSp)$ over the flag variety $\GL_n(\C)/B$ by
\begin{align*}
s_\matX: \GL_n(\C)/B \rightarrow \GL_n(\C) \times_B (\gl_n(\C)/\HessSp); \ gB \mapsto [g,\overline{g^{-1} \matX g}],
\end{align*}
where $[g,\overline{\matX}]$ is the image of $(g,\overline{\matX}) \in \GL_n(\C) \times (\gl_n(\C)/\HessSp)$ under the surjection $\GL_n(\C) \times (\gl_n(\C)/\HessSp) \rightarrow \GL_n(\C) \times_B (\gl_n(\C)/\HessSp)$ such that $[g,\overline{\matX}]=[gb,\overline{b^{-1}\matX b}]$ for all $b \in B$. 
By the definition \eqref{eq:Hessenberg_variety} the zero set of $s_\matX$ is the Hessenberg variety $\Hess(\matX,\HessSp)$. 
From this point the \emph{Hessenberg scheme} $\ZZ(\matX,\HessSp)$ associated to $\matX \in \gl_n(\C)$ and a Hessenberg space $\HessSp \subset \gl_n(\C)$ is defined to be the zero scheme of $s_\matX$. 
Namely, if we write $\zeta_{i,j}^w (g) = (g^{-1} \matX g)_{ij}$ for $g \in wU$, then $\ZZ(\matX,\HessSp)$ is defined in $wU$ by the ideal generated by $\zeta_{i,j}^w$ for all $i > \hh(j)$ where $\hh$ is the Hessenberg function corresponding to $\HessSp$. 
Note that we also denote $\ZZ(\matX,\HessSp)$ by $\ZZ(\matX,\hh)$.

\begin{remark}
Hessenberg schemes are originally introduced by Anderson--Tymoczko \cite{AndTym} in terms of degeneracy loci in a flag bundle.
The definition of Hessenberg schemes explained above is introduced in \cite{ADGH}. 
Note that Hessenberg schemes introduced in \cite{ADGH} and \cite{AndTym} are the same.
\end{remark}

\subsection{Regular nilpotent and regular semisimple}
Throughout this paper we take a linear operator $\matX: \C^n \to \C^n$ as the following two matrices
\begin{equation} \label{eq:regular nilpotent semisimple} 
\matN = 
\begin{pmatrix}
0 & 1 & &   \\
     & \ddots & \ddots & \\
     &   & 0 & 1 \\
     &  & & 0 \\ 
\end{pmatrix}, \quad 
\matS = 
\begin{pmatrix}
1 &  & &   \\
     & 2 & & \\
     &   & \ddots &  \\
     &  & & n \\ 
\end{pmatrix}. 
\end{equation}
Then $\Hess(\matN,\HessSp)$ is called the \emph{regular nilpotent Hessenberg variety} and $\Hess(\matS,\HessSp)$ is called the \emph{regular semisimple Hessenberg variety}.
Similarly, we call $\ZZ(\matN,\HessSp)$ the \emph{regular nilpotent Hessenberg scheme} and $\ZZ(\matS,\HessSp)$ the \emph{regular semisimple Hessenberg scheme}, respectively.
We record their geometric properties. 

\begin{proposition} $($ \cite[Theorem~1.3]{AbeInsko}, \cite[Lemma~7.1]{AndTym}, \cite[Theorem~10.2]{SomTym}$)$ 
Let $\hh: [n] \to [n]$ be a Hessenberg function. 
Then the followings hold.
\begin{enumerate}
\item[(i)] The complex dimension of the regular nilpotent Hessenberg variety $\Hess(\matN,\hh)$ is equal to $\sum_{j=1}^n (\hh(j)-j)$. 
\item[(ii)] Every regular nilpotent Hessenberg variety $\Hess(\matN,\hh)$ is irreducible.
\item[(iii)] Suppose that $\hh(j) >j$ for any $j \in [n-1]$. 
Then the regular nilpotent Hessenberg variety $\Hess(\matN,\hh)$ is normal if and only if $\hh$ satisfies the condition that $\hh(i-1) > i$ or $\hh(i) > i + 1$ for all $1 < i < n-1$.  
\end{enumerate}
\end{proposition}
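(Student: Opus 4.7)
The plan is to handle the three parts separately, in increasing order of difficulty, using the local description of $\Hess(\matN,\hh)$ in the affine chart $U$ together with a torus paving argument.

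For (i), I would first bound the dimension from above by exhibiting explicit defining equations. On each Bruhat cell $wU$, the scheme $\Hess(\matN,\hh)$ is cut out by the equations $(g^{-1}\matN g)_{ij}=0$ for $i>\hh(j)$, so it is locally cut out by $\binom{n}{2}-\sum_{j=1}^n(\hh(j)-j)$ polynomials inside an affine space of dimension $\binom{n}{2}$. This gives $\dim \Hess(\matN,\hh)\geq \sum_{j=1}^n(\hh(j)-j)$ whenever the intersection is nonempty on some chart. The matching upper bound comes from Tymoczko's affine paving: after restricting the Schubert-cell decomposition of $\Fl(\C^n)$ to $\Hess(\matN,\hh)$, each intersection is an affine cell whose dimension is the number of ``Hessenberg inversions'' of the indexing permutation, and the maximum of these cell dimensions over all $w\in S_n$ is $\sum_{j=1}^n(\hh(j)-j)$, attained by the identity (or by the longest element, depending on conventions).

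For (ii), irreducibility follows once one checks that the open patch $\Hess(\matN,\hh)\cap U$ is irreducible and dense. Density is straightforward from the paving in (i): every positive-dimensional cell meets $U$ after a left translation, and the top cell lies in $U$ itself. Irreducibility of $\Hess(\matN,\hh)\cap U$ reduces to the dimension count in (i) showing that the defining equations $\nu_{i,j}=0$ for $i>\hh(j)$ form a regular sequence on $U$, so the intersection is a local complete intersection of the expected dimension; combined with the fact that $\Hess(\matN,\hh)$ is connected (which follows from having a unique torus-fixed point under a suitable $\C^*$-action scaling the Jordan block $\matN$), this gives irreducibility of the whole variety. An alternative is to use the projection $\Hess(\matN,\hh)\to\Fl(\C^{n-1})$ obtained by forgetting $V_1$, which when $\hh(1)>1$ expresses $\Hess(\matN,\hh)$ as an iterated bundle over a smaller (inductively irreducible) Hessenberg variety.

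For (iii), assume $\hh(j)>j$ for $j\in[n-1]$, so $\Hess(\matN,\hh)$ has no $0$-dimensional irreducible components to worry about. By Serre's criterion, normality is equivalent to $R_1$ plus $S_2$; the $S_2$ condition is automatic because on every chart $wU$ the variety is a local complete intersection (hence Cohen--Macaulay) of the expected codimension, by the regular-sequence argument in (ii). So the whole statement reduces to $R_1$: the singular locus must have codimension $\geq 2$. The singular locus is $B$-stable under conjugation by $B$ preserving $\matN$, hence is a union of Schubert-type strata, and a direct Jacobian computation at the torus-fixed points of a maximal torus commuting with $\matN$ (or equivalently at each permutation $w\in S_n$ appearing in Tymoczko's paving) detects singularity via the rank of the linearization of $\{\nu_{i,j}\}_{i>\hh(j)}$. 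The combinatorial condition ``$\hh(i-1)>i$ or $\hh(i)>i+1$ for all $1<i<n-1$'' is then precisely the condition that the locus of fixed points at which the Jacobian drops rank by more than $1$ has codimension $\geq 2$ in $\Hess(\matN,\hh)$; the failure cases are isolated configurations where two consecutive rows of the Hessenberg pattern force a repeated redundant equation. The main obstacle is this last combinatorial-geometric analysis in part (iii): one must precisely match the local Jacobian rank defect to the stated inequality on $\hh$, which requires a careful case analysis at each relevant fixed point rather than a single uniform argument.
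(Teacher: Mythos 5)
The paper does not prove this proposition; it is cited from \cite{AbeInsko}, \cite{AndTym}, and \cite{SomTym}, so there is no internal argument to compare against. Judged on its own terms, your sketch invokes the right circle of ideas (local defining equations, paving by affines, Jacobian analysis), but there are two genuine gaps. The more serious one is in (ii): you deduce irreducibility from ``connected plus local complete intersection of the expected dimension.'' That inference is false. For instance $\{xy=0\}\subset\C^2$ is a connected complete intersection of the expected dimension and is reducible. Cohen--Macaulayness gives equidimensionality, and equidimensional plus connected does not give irreducible; one really needs something like the flow-up structure of the $\C^*$-action to see that the closure of a single top-dimensional cell is all of $\Hess(\matN,\hh)$, or a genuinely inductive bundle argument. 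The alternative you offer --- forgetting $V_1$ --- does not obviously help, since that forgetful map does not have constant fibers on $\Hess(\matN,\hh)$: the condition $\matN V_1\subset V_{\hh(1)}$ cuts the fiber $\mathbb{P}(V_2)$ down by an amount depending on how $V_2,\dots,V_n$ sit relative to $\matN$.

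In (iii), the phrase ``torus-fixed points of a maximal torus commuting with $\matN$'' is not correct: the centralizer of a regular nilpotent $\matN$ in $\GL_n(\C)$ is $\{c_0I+c_1\matN+\cdots+c_{n-1}\matN^{n-1}:c_0\neq 0\}$, an abelian group whose only subtorus is the scalars. The group that actually acts on $\Hess(\matN,\hh)$ is the one-parameter subgroup $\mathrm{diag}(t,t^2,\ldots,t^n)$, which rescales $\matN$ rather than fixing it; its fixed points are the permutation flags in the variety and are the right points at which to linearize. For the same reason the singular locus is not $B$-stable, only stable under $Z_B(\matN)$. Most substantially, the step that matches the Jacobian rank defect to the stated inequality on $\hh$ is asserted rather than derived; as you yourself note, this case analysis is the actual content of \cite[Theorem~1.3]{AbeInsko}, and your sketch leaves it unfilled.
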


\begin{proposition} $($\cite[Proposition~A.1]{AndTym}, \cite[Theorem~6]{dMPS} $)$
Let $\hh: [n] \to [n]$ be a Hessenberg function.
Then the followings hold.
\begin{enumerate}
\item[(i)] The regular semisimple Hessenberg variety $\Hess(\matS,\hh)$ is equidimensional of complex dimension equal to $\sum_{j=1}^n (\hh(j)-j)$. 
\item[(ii)] The regular semisimple Hessenberg variety $\Hess(\matS,\hh)$ is connected and irreducible if and only if $\hh(j) >j$ for all $j \in [n-1]$. 
\item[(iii)] Every regular semisimple Hessenberg variety $\Hess(\matS,\hh)$ is smooth.
\end{enumerate}
\end{proposition}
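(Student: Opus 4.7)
The three parts are tightly linked: once $\Hess(\matS,\hh)$ is known to be smooth, part (i) is a transversality dimension count and part (ii) reduces to a connectedness question (connected $\Leftrightarrow$ irreducible for a smooth variety). My plan is therefore to establish (iii) first via a torus-equivariant transversality argument, derive (i) as a corollary, and attack (ii) using a Bia\l ynicki--Birula decomposition.

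\emph{Proof of (iii) and (i).} Realize $\Hess(\matS,\hh)$ as the zero scheme of the section $s_\matS$ of the vector bundle $E=\GL_n(\C)\times_B(\gl_n(\C)/\HessSp)$ over $\Fl(\C^n)$. The fixed points of the diagonal torus $T$ on $\Fl(\C^n)$ are the $n!$ permutation flags $wB$ ($w\in S_n$), and each lies in $\Hess(\matS,\hh)$ since $\matS$ preserves every coordinate flag. At $wB$, the local parameterization $g=w\exp(X)$ with strictly lower-triangular $X$ yields $ds_\matS(E_{ij})=(w^{-1}(i)-w^{-1}(j))\,E_{ij}$ modulo $\HessSp$ for $i>j$; the coefficient is nonzero whenever $i\neq j$, so each generator $E_{ij}$ of $\gl_n(\C)/\HessSp$ (those with $i>\hh(j)$) lies in the image and $ds_\matS$ is surjective at $wB$. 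The non-surjectivity locus inside $\Hess(\matS,\hh)$ is closed (upper semi-continuity of cokernel rank) and $T$-stable (since $s_\matS$ is $T$-equivariant), so if nonempty it would contain a $T$-fixed point by Borel's fixed-point theorem, contradicting the pointwise check. Therefore $s_\matS$ is transverse everywhere, $\Hess(\matS,\hh)$ is smooth, and transversality forces local dimension $\dim\Fl(\C^n)-\mathrm{rank}(E)=\binom{n}{2}-\sum_{j=1}^n(n-\hh(j))=\sum_{j=1}^n(\hh(j)-j)$ at every point; smoothness then forces every irreducible component to share this dimension, proving (i).

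\emph{Proof of (ii).} Since $\Hess(\matS,\hh)$ is smooth by (iii), connectedness is equivalent to irreducibility. If $\hh(j)=j$ for some $j\in[n-1]$, then the condition $\matS V_j\subseteq V_j$ forces $V_j$ to be $\matS$-invariant, hence to be one of the $\binom{n}{j}$ coordinate subspaces $\mathrm{span}(e_i:i\in I)$; the induced map $\Hess(\matS,\hh)\to\mathrm{Gr}(j,n)$ is locally constant onto this finite set, and each fiber is nonempty (any ordering of $I$ followed by $[n]\setminus I$ produces a permutation flag in the fiber), so $\Hess(\matS,\hh)$ decomposes into $\binom{n}{j}\geq 2$ nonempty clopen pieces and is disconnected. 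Conversely, assume $\hh(j)>j$ for all $j\in[n-1]$. Choose a generic cocharacter $\rho:\C^*\to T$, say $\rho(t)=\mathrm{diag}(t,t^2,\ldots,t^n)$, and form the associated Bia\l ynicki--Birula decomposition of $\Hess(\matS,\hh)$ indexed by $S_n$. The tangent computation from (iii) shows $T_{wB}(\Hess(\matS,\hh))$ has basis $\{E_{ij}:j<i\leq\hh(j)\}$ with $T$-weights $e_{w(i)}-e_{w(j)}$, so the cell at $wB$ has dimension $|\{(i,j):j<i\leq\hh(j),\ w(i)<w(j)\}|$. A $0$-dimensional cell forces $w(i)>w(j)$ for every such $(i,j)$; specializing $i=j+1$ (allowed since $\hh(j)\geq j+1$) yields $w(j+1)>w(j)$ for each $j\in[n-1]$, so $w=e$ is the unique solution. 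Thus there is a unique $0$-cell, $\dim H^0(\Hess(\matS,\hh);\C)=1$, and the variety is connected.

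\emph{Main obstacle.} The delicate step is the globalization in (iii): upgrading surjectivity of $ds_\matS$ from the isolated $T$-fixed points to the whole variety via $T$-equivariance together with Borel's fixed-point theorem. An alternative, more computational route is to verify transversality pointwise, using that $M:=g^{-1}\matS g$ remains regular semisimple and, because $\HessSp$ is $\b$-stable with $M\in\HessSp$, $\mathrm{ad}(M)(\b)\subseteq\HessSp$; surjectivity then reduces to the identity $\mathrm{Im}(\mathrm{ad}(M))+\HessSp=\gl_n(\C)$, which follows from the centralizer description of $\mathrm{Im}(\mathrm{ad}(M))$ for regular semisimple $M$.
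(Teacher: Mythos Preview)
The paper does not prove this proposition at all; it is quoted as background from \cite{AndTym} and \cite{dMPS}. Your argument is correct and is, in essence, the classical one underlying those references: smoothness via transversality of the defining section $s_{\matS}$ (checked at $T$-fixed points and propagated by $T$-equivariance and Borel's fixed-point theorem), the dimension formula as an immediate byproduct of transversality, and connectedness via a Bia\l{}ynicki--Birula affine paving.

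One small slip that does not affect the argument: with the paper's convention $w=(\delta_{i,w(j)})_{i,j}$ one computes $w^{-1}\matS w=\mathrm{diag}(w(1),\dots,w(n))$, so the intrinsic derivative at $wB$ reads $ds_{\matS}(E_{ij})=(w(i)-w(j))\,E_{ij}\bmod\HessSp$, not $(w^{-1}(i)-w^{-1}(j))\,E_{ij}$. The coefficient is nonzero for $i\neq j$ either way, so surjectivity onto $\gl_n(\C)/\HessSp$ and the tangent-space description $T_{wB}\Hess(\matS,\hh)=\mathrm{span}\{E_{ij}:j<i\le\hh(j)\}$ stand. Your ``main obstacle'' alternative also goes through cleanly: with respect to the trace form one has $\HessSp^{\perp}\subseteq\mathfrak n_+$, hence $\HessSp^{\perp}$ consists of nilpotent elements, while the centralizer $Z(M)$ of the regular semisimple $M=g^{-1}\matS g$ is a Cartan subalgebra and consists of semisimple elements; thus $Z(M)\cap\HessSp^{\perp}=0$, which is the dual statement to $\mathrm{Im}(\mathrm{ad}\,M)+\HessSp=\gl_n(\C)$.
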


\begin{proposition} $($\cite[Proposition~3.6]{ADGH}, \cite[Remark~3.5]{AndTym}$)$ \label{proposition:reduced}
Let $\hh: [n] \to [n]$ be a Hessenberg function.
Then the followings hold.
\begin{enumerate}
\item[(1)] If $\hh(j) >j$ for any $j \in [n-1]$, then the regular nilpotent Hessenberg scheme $\ZZ(\matN,\hh)$ is reduced. 
\item[(2)] Every regular semisimple Hessenberg scheme $\ZZ(\matS,\hh)$ is reduced.
\end{enumerate}
\end{proposition}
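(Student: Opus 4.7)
The plan is to handle both assertions via a common template: on each affine chart $wU\subset\GL_n(\C)/B$, show that the scheme $\ZZ(\matX,\hh)$ is a local complete intersection, then verify generic reducedness on each irreducible component. Writing $\zeta^w_{i,j}(g)=(g^{-1}\matX g)_{ij}$, the scheme is cut out in $wU\cong\C^{\binom{n}{2}}$ by the $\sum_{j=1}^n(n-\hh(j))$ regular functions indexed by $i>\hh(j)$. The dimension formulas recorded above give $\dim\Hess(\matX,\hh)=\sum_j(\hh(j)-j)=\binom{n}{2}-\sum_j(n-\hh(j))$, so the number of defining equations matches the codimension of the underlying variety. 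Consequently the defining functions form a regular sequence, $\ZZ(\matX,\hh)\cap wU$ is a local complete intersection, hence Cohen--Macaulay with no embedded associated primes. Reducedness therefore reduces to exhibiting, on each irreducible component, a single point where the Jacobian of the $\zeta^w_{i,j}$ attains the maximal rank $\sum_j(n-\hh(j))$.

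For part (2), this is verified by direct computation at each permutation flag $w\in S_n$, regarded as the base point of its own chart $wU$. Parametrising nearby points as $g=w(I+Y)$ with $Y=\sum_{k>\ell}y_{k\ell}E_{k\ell}$ gives $g^{-1}\matS g\equiv w^{-1}\matS w+[w^{-1}\matS w,\,Y]\pmod{(y)^2}$. Since $w^{-1}\matS w$ is diagonal with entries $w(1),\ldots,w(n)$, the $(i,j)$-entry of the commutator equals $(w(i)-w(j))\,y_{ij}$. The Jacobian at $g=w$ of the equations indexed by $(i,j)$ with $i>\hh(j)$ is therefore diagonal in the corresponding columns with nonzero pivots $w(i)-w(j)$, and has maximal rank. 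Every permutation flag lies in $\Hess(\matS,\hh)$ because $w^{-1}\matS w$ is diagonal hence in $\HessSp$; by Borel's fixed point theorem each $T$-stable irreducible component of $\Hess(\matS,\hh)$ contains at least one such $T$-fixed point, and combined with Cohen--Macaulayness this yields reducedness of $\ZZ(\matS,\hh)$.

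For part (1), irreducibility of $\Hess(\matN,\hh)$ reduces the task to producing a single smooth point of $\ZZ(\matN,\hh)$. Here the hypothesis $\hh(j)>j$ for $j\in[n-1]$ is essential: when $n=2$ and $\hh=(1,2)$, the equation $(g^{-1}\matN g)_{21}=-x_{21}^2$ on $U$ shows $\ZZ(\matN,\hh)\cap U\cong\Spec\C[x_{21}]/(x_{21}^2)$ is nonreduced. My strategy is to exploit the $\C^*$-action on $\Hess(\matN,\hh)$ coming from scaling $\matN$ and to locate a smooth point of the scheme on the dense cell of the associated Bia\l{}ynicki-Birula decomposition. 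Concretely, I would linearise the $\zeta^w_{i,j}$ in coordinates adapted to such a cell and use the hypothesis $\hh(j)>j$ to ensure that the tangent contributions from the strictly lower-triangular coordinates span the full rank $\sum_j(n-\hh(j))$.

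The main obstacle is this last step: producing an explicit smooth point of $\ZZ(\matN,\hh)$. The naive candidate $g=I$ is inadequate because $([\matN,E_{k\ell}])_{ij}=\delta_{i,k-1}\delta_{j,\ell}-\delta_{i,k}\delta_{j,\ell+1}$ yields Jacobian rows too sparse to be linearly independent for general $\hh$; indeed $g=I$ is typically already a singular point of $\Hess(\matN,\hh)$. Pinning down a concrete smooth point and verifying that the hypothesis $\hh(j)>j$ is precisely what guarantees its existence is the technical heart of the argument. A possible alternative is to degenerate $\matN$ to $\matN+t\matS$ and appeal to part~(2) at the generic fibre, but transferring reducedness from the generic to the special fibre under flat specialisation requires care, since this property is not automatically preserved.
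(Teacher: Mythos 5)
This proposition is quoted in the paper with citations to \cite{ADGH} and \cite{AndTym}; the paper itself supplies no proof, so there is nothing internal to compare against. What follows is therefore a review of your argument on its own merits.

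Your treatment of part~(2) is correct and is essentially the standard argument. The dimension count $\sum_{j}(n-\hh(j)) = \binom{n}{2} - \sum_j(\hh(j)-j)$ shows the defining equations on each chart $wU$ are a system of the right size, so $\ZZ(\matS,\hh)$ is a local complete intersection, hence Cohen--Macaulay, hence has no embedded primes; reducedness is then equivalent to generic reducedness. Your Jacobian computation at $g=w$ is correct: writing $g=w(I+Y)$ and expanding modulo $(y)^2$ gives $g^{-1}\matS g \equiv w^{-1}\matS w + [w^{-1}\matS w,Y]$, and since $w^{-1}\matS w$ is diagonal with distinct entries $w(1),\dots,w(n)$, the linearised equation indexed by $(i,j)$ with $i>\hh(j)$ has the single nonzero coefficient $w(i)-w(j)\ne 0$ in front of $y_{ij}$. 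Each $T$-fixed point of $\Fl(\C^n)$ lies in $\Hess(\matS,\hh)$ and each irreducible component, being $T$-stable, contains one; so each component has a smooth point and the scheme is reduced. This part is complete.

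Part~(1), however, has a genuine gap, which you honestly flag yourself: you never produce a smooth point of $\ZZ(\matN,\hh)$. The LCI/Cohen--Macaulay reduction is sound, and the irreducibility of $\Hess(\matN,\hh)$ does reduce the problem to exhibiting a single reduced point, but that single point is exactly where the hypothesis $\hh(j)>j$ must be used, and your argument stops there. Your diagnosis is also accurate: the identity $g=I$ is in general a singular point of the variety, so the analogue of your part~(2) computation fails, and a degeneration from $\matN+t\matS$ does not transfer reducedness from the generic to the special fibre (flat specialisation can introduce nilpotents). The cited references close this gap by locating a concrete smooth point --- for instance by exploiting the dense affine cell in the Bia{\l}ynicki-Birula decomposition of $\Hess(\matN,\hh)$ for the $\C^*$-action on $\matN$ (or equivalently the intersection with a suitable translated big Schubert cell), where the scheme structure is visibly that of an affine space of the correct dimension precisely because $\hh(j)>j$ forces the relevant linear parts of the $\zeta^{w}_{i,j}$ to be independent. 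Without that explicit construction your proof of part~(1) is only a plan, not a proof.
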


\section{Cohomology rings of regular nilpotent Hessenberg varieties} \label{sect:cohomology rings of regular nilpotent Hessenberg varieties}

We define polynomials $f_{i,j}$ for $i \geq j \geq 1$ by the following recursive formula:
\begin{align} \label{eq:recursive_formula_fij}
\begin{split}
f_{j,j} &= x_1+ \cdots + x_j \ \ \ \textrm{(the base case);} \\
f_{i,j} &= f_{i-1,j-1} + (x_j-x_i)f_{i-1,j} \ \ \ \textrm{for} \ i>j.
\end{split}
\end{align}
Here, we take the convention $f_{*,0}=0$ for arbitrary $*$.
Explicitly, we obtain from \cite[Lemma~6.5]{AHHM} that 
\begin{align} \label{eq:fij}
f_{i,j} \coloneqq \sum_{k=1}^{j} \left(\prod_{\ell=j+1}^i (x_k-x_\ell) \right) x_k
\end{align}
with the convention $\prod_{\ell=j+1}^i (x_k-x_\ell)=1$ whenever $i=j$.
Here, we define 
\begin{align} \label{eq:degree xi}
\deg x_i =2
\end{align} 
for all $i \in [n]$. 
Note that $f_{i,j}$ is homogeneous of degree $2(i-j+1)$. 

\begin{theorem} $($\cite[Theorem~A]{AHHM}$)$
Let $\hh: [n] \rightarrow [n]$ be a Hessenberg function.
Then there is an isomorphism of graded $\C$-algebras
\begin{align} \label{eq:cohomology_Hess(N,h)}
H^*(\Hess(\matN,\hh)) \cong \C[x_1,\ldots,x_n]/(f_{h(1),1},f_{h(2),2},\ldots,f_{h(n),n}).
\end{align}
\end{theorem}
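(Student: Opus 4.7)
My plan is in three stages and leans on Borel's presentation together with the known surjectivity of the restriction map $H^{*}(\Fl(\C^{n})) \twoheadrightarrow H^{*}(\Hess(\matN,\hh))$, itself a consequence of the affine paving of $\Hess(\matN,\hh)$ due to Tymoczko and Precup. Granting surjectivity, the theorem reduces to identifying the kernel of the composite $\C[x_{1},\ldots,x_{n}] \twoheadrightarrow H^{*}(\Hess(\matN,\hh))$ as the ideal $I \coloneqq (f_{\hh(1),1},\ldots,f_{\hh(n),n})$, and I would split this into the two inclusions $I \subseteq \ker$ and $\ker \subseteq I$.

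For the first inclusion I would verify two claims. First, the ideal $I$ already captures the Borel relations: an induction on $i$ using the recursion $f_{i,j}=f_{i-1,j-1}+(x_{j}-x_{i})f_{i-1,j}$ together with the monotonicity $\hh(j-1) \leq \hh(j)$ shows that every $f_{i,j}$ with $i \geq \hh(j)$ lies in $I$, and specializing to $i=n$ yields $(e_{1}^{(n)},\ldots,e_{n}^{(n)}) = (f_{n,1},\ldots,f_{n,n}) \subseteq I$. Second, each $f_{\hh(j),j}$ must vanish in $H^{*}(\Hess(\matN,\hh))$. I would deduce this from the fact that the bundle map $V_{j} \xrightarrow{\matN} \C^{n}/V_{\hh(j)}$ is identically zero on $\Hess(\matN,\hh)$: applying the splitting principle to the tautological filtration with $x_{k}=c_{1}((V_{k}/V_{k-1})^{*})$ yields a characteristic class identity that packages itself precisely as $f_{\hh(j),j}=0$, and an equivalent localization check at each torus fixed point via Tymoczko's GKM model would give an independent verification.

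The second inclusion $\ker \subseteq I$, equivalently injectivity of the induced surjection $\C[x_{1},\ldots,x_{n}]/I \twoheadrightarrow H^{*}(\Hess(\matN,\hh))$, is the main obstacle, and I would establish it via a Hilbert series match. The affine paving gives the Poincar\'e polynomial $\prod_{j=1}^{n}[\hh(j)-j+1]_{t^{2}}$, which equals $\prod_{j}(1-t^{2(\hh(j)-j+1)})/(1-t^{2})$. Since $\deg f_{\hh(j),j}=2(\hh(j)-j+1)$, it would suffice to prove that $(f_{\hh(1),1},\ldots,f_{\hh(n),n})$ is a regular sequence in $\C[x_{1},\ldots,x_{n}]$, for then the quotient has exactly this Hilbert series and the surjection must be an isomorphism. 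I would attack regularity by induction on $n$ through the recursion, specializing $x_{n}$ (whose appearance in $f_{\hh(n),n}$ is controlled explicitly) to reduce to an $(n-1)$-variable problem for a smaller Hessenberg function. The technical crux, and where I expect most of the effort to go, is handling boundary cases such as $\hh(j)=j$, where $f_{j,j}=x_{1}+\cdots+x_{j}$ enters as a linear polynomial, and ensuring that the inductive specialization preserves a compatible regular sequence at every stage.
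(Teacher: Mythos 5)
The paper cites this result from \cite{AHHM} without reproducing a proof, so the comparison is against \cite{AHHM}'s original argument. Your three-stage plan (surjectivity of restriction, $I \subseteq \ker$, $\ker \subseteq I$ via a Hilbert series match) does reproduce the high-level architecture of that proof, but two of your steps have real problems.

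The claim that the vanishing of the bundle map $V_j \xrightarrow{\matN} \C^n/V_{\hh(j)}$ ``packages itself precisely as $f_{\hh(j),j}=0$'' via the splitting principle is not correct as stated. The Euler class of $\mathrm{Hom}(V_j,\C^n/V_{\hh(j)})$ is a product of the linear forms $x_\ell-x_k$ over $k \leq j$, $\ell > \hh(j)$, of degree $2j(n-\hh(j))$, whereas $f_{\hh(j),j}$ has degree $2(\hh(j)-j+1)$; these are unrelated, and an identically vanishing (hence non-generic) section of a bundle over the ambient $\Fl(\C^n)$ yields no direct Chern-class relation inside $H^*(\Hess(\matN,\hh))$. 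What \cite{AHHM} actually does is pass to $S^1$-equivariant cohomology, introduce equivariant lifts of the $f_{i,j}$, verify their vanishing by restriction to the $S^1$-fixed locus, and only at the end specialize back to ordinary cohomology. Your parenthetical remark about a localization check is pointing in the right direction, but the $S^1$-fixed set of $\Hess(\matN,\hh)$ is not discrete, so this is not a quick ``independent verification'' --- it is where the substance of the proof lives.

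For $\ker \subseteq I$, your proposal to prove regularity by induction on $n$, specializing $x_n$, is far more delicate than necessary, and you flag yourself that the boundary cases $\hh(j)=j$ (where $f_{j,j}$ is linear) will cause trouble. There is a much shorter route, already implicit in your own first step: once you know $(e_1^{(n)},\ldots,e_n^{(n)}) = (f_{n,1},\ldots,f_{n,n}) \subseteq I$, the common zero locus of $I$ is contained in $V(e_1^{(n)},\ldots,e_n^{(n)})=\{0\}$, so $I$ has height $n$. In the Cohen--Macaulay ring $\C[x_1,\ldots,x_n]$, any $n$ homogeneous polynomials generating a height-$n$ ideal automatically form a regular sequence, so $\C[x_1,\ldots,x_n]/I$ is a graded complete intersection with Hilbert series $\prod_{j=1}^n \bigl(1-t^{2(\hh(j)-j+1)}\bigr)/(1-t^2)$. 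Comparing with the Poincar\'e polynomial from the affine paving then finishes, exactly as you intended but without the fragile induction. This is essentially the argument in \cite{AHHM}.
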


\begin{remark}
If $\hh=(n,\ldots,n)$, then the isomorphism \eqref{eq:cohomology_Hess(N,h)} is written as
\begin{align*}
H^*(\Fl(\C^n)) \cong \C[x_1,\ldots,x_n]/(f_{n,1},f_{n,2},\ldots,f_{n,n}).
\end{align*}
On the other hand, it is well-known by \cite{Bor53} that $H^*(\Fl(\C^n))$ is isomorphic to the quotient of the polynomial ring $\C[x_1,\ldots,x_n]$ by an ideal generated by the $i$-th elementary symmetric polynomials in the variables $x_1,\ldots,x_n$ for $i \in [n]$. 
We can verify that this ideal equals the ideal $(f_{n,1},f_{n,2},\ldots,f_{n,n})$ in $\C[x_1,\ldots,x_n]$ by \cite[Proof of Lemma~6.8]{AHHM}.
\end{remark}

By the recursive formula \eqref{eq:recursive_formula_fij} for $f_{i,j}$, we can add redundant generators in \eqref{eq:cohomology_Hess(N,h)} as follows:
\begin{align} \label{eq:cohomology_Hess(N,h)_2}
H^*(\Hess(\matN,\hh)) \cong \C[x_1,\ldots,x_n]/(f_{i,j} \mid i \geq \hh(j)). 
\end{align}
Tymoczko constructed an action of the symmetric group $S_n$ on the cohomology of regular semisimple Hessenberg varieties in \cite{Tym08}.
This action is called the \emph{dot action}. 
It is known that the dot action is related to the cohomology ring of regular nilpotent Hessenberg varieties.

\begin{theorem} $($\cite[Theorem~B]{AHHM}$)$ \label{theorem:AHHM_NandS}
Let $\hh: [n] \rightarrow [n]$ be a Hessenberg function.
Then there is an isomorphism of graded $\C$-algebras
\begin{align*} 
H^*(\Hess(\matN,\hh)) \cong H^*(\Hess(\matS,\hh))^{S_n},
\end{align*}
where $H^*(\Hess(\matS,\hh))^{S_n}$ denotes the invariants in $H^*(\Hess(\matS,\hh))$ under the dot action of the symmetric group $S_n$. 
\end{theorem}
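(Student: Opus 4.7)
The plan is to construct a surjection $\Phi \colon H^*(\Hess(\matN,\hh)) \twoheadrightarrow H^*(\Hess(\matS,\hh))^{S_n}$ factoring through $H^*(\Fl(\C^n))$, and then promote $\Phi$ to an isomorphism by a Hilbert series comparison. The two ingredients are the explicit presentation \eqref{eq:cohomology_Hess(N,h)} from Theorem~A, and Tymoczko's GKM description of the dot action.

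First I would show that the restriction $\iota_S^* \colon H^*(\Fl(\C^n)) \to H^*(\Hess(\matS,\hh))$ takes values in the dot-action invariants. In Tymoczko's GKM model, the dot action sends $f \colon S_n \to H^*(BT)$ to $v \cdot f$ with $(v \cdot f)(w) = v(f(v^{-1}w))$, and a pullback from the flag variety takes the form $w \mapsto w(p)$ for some polynomial $p$, which is manifestly dot-invariant. Specializing equivariant parameters to zero gives $\bar\iota_S^* \colon H^*(\Fl(\C^n)) \to H^*(\Hess(\matS,\hh))^{S_n}$. Surjectivity of $\bar\iota_S^*$ onto the invariants follows by combining equivariant formality of $\Hess(\matS,\hh)$ with the observation that a dot-invariant equivariant class is completely determined by its value at $e \in S_n$ (forcing $f(w) = w(f(e))$), so that every polynomial in $H^*(BT)$ lifts to a well-defined GKM class.

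Second I would identify the kernel, proving $f_{\hh(j),j} \in \ker \bar\iota_S^*$ for each $j$. Using the explicit factored form \eqref{eq:fij} and localizing at each $w \in S_n$, the factor $\prod_{\ell=j+1}^{\hh(j)}(x_k - x_\ell)$ encodes the root directions removed by the Hessenberg constraint on $\Hess(\matS,\hh)$, forcing the localized sum to vanish in the appropriate GKM quotient. Combined with \eqref{eq:cohomology_Hess(N,h)} this produces the surjection $\Phi$. To upgrade $\Phi$ to an isomorphism, I would compare Hilbert series: the AHHM presentation is a complete intersection, giving
\[
\sum_i \dim H^{2i}(\Hess(\matN,\hh))\, t^i = \prod_{j=1}^n \frac{1 - t^{\hh(j) - j + 1}}{1 - t},
\]
and on the invariant side, Tymoczko's graded $S_n$-character formula for $H^*(\Hess(\matS,\hh))$ expresses the Poincar\'e polynomial as a sum over permutations indexed by Hessenberg inversions, from which extracting the trivial-representation multiplicity per degree yields the same product. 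A graded surjection between algebras of equal finite Hilbert series is an isomorphism.

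The hard part will be the invariant Hilbert series computation: it amounts to a combinatorial identity equating the trivial-representation multiplicity in the graded dot representation with $\prod_j (\hh(j)-j+1)$, which is delicate and intersects Stanley's chromatic symmetric functions via the Shareshian--Wachs framework. A possibly cleaner alternative is to work entirely at the $T$-equivariant level: give matching presentations of $H^*_T(\Hess(\matN,\hh))$ and $H^*_T(\Hess(\matS,\hh))^{S_n}$ in terms of the $f_{i,j}$ (the latter essentially being $H^*(BT)$ modulo the AHHM ideal by the GKM analysis above), establish the isomorphism equivariantly, and then specialize equivariant parameters to zero.
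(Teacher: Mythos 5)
The paper does not prove Theorem~\ref{theorem:AHHM_NandS}; it simply cites \cite[Theorem~B]{AHHM}, so there is no in-paper argument to compare against. Judged on its own, your outline is broadly aligned with the AHHM strategy (factor through $H^*(\Fl(\C^n))$, show the image is the invariant subring, match Hilbert series), and your step~1 is essentially correct: $w\mapsto w(p)$ is dot-invariant, satisfies the GKM conditions on the Hessenberg subgraph, and by averaging over $S_n$ the surjection $H^*_T\to H^*$ descends to a surjection on invariants. The complete-intersection Hilbert series on the nilpotent side is also right.

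However, step~2 as you describe it has a real gap. Since $\Hess(\matS,\hh)$ and $\Fl(\C^n)$ have the \emph{same} $T$-fixed point set $S_n$, and the GKM graph of $\Hess(\matS,\hh)$ has \emph{fewer} edges, the equivariant restriction $H^*_T(\Fl(\C^n))\into H^*_T(\Hess(\matS,\hh))$ is \emph{injective} --- one cannot see $f_{\hh(j),j}$ die by ``localizing at each $w$.'' The localization $w\bigl(\tilde f_{\hh(j),j}\bigr)$ is a nonzero polynomial in the equivariant parameters at every vertex $w$. The vanishing only appears after setting the $t_i$ to zero, and it requires showing that the equivariant lift of $f_{\hh(j),j}$ lies in the ideal $(t_1,\ldots,t_n)\cdot H^*_T(\Hess(\matS,\hh))$ of the \emph{larger} ring, which amounts to exhibiting GKM classes satisfying the weaker constraints on the Hessenberg graph that witness the divisibility. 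That is the genuine content of the kernel step, and your sketch bypasses it. Your proposed ``cleaner alternative'' also has a defect: the full torus $T$ does not preserve $\Hess(\matN,\hh)$ (only a one-dimensional subtorus does), so $H^*_T(\Hess(\matN,\hh))$ is not available as you describe; AHHM work with an $S^1$-equivariant theory on the nilpotent side, which is structurally quite different from what you propose. Finally, the invariant Hilbert series computation, which you honestly flag as the hard part, is indeed where the bulk of the work lives and is left entirely open.
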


For a Hessenberg function $\hh : [n] \to [n]$, we define the dual $\hh^*$ of $\hh$ by
\begin{align} \label{eq:HessenbergFunctionDual}
\hh^*(i) \coloneqq |\{j \in [n] \mid \hh(j) \geq n+1-i \}|
\end{align}
for $i \in [n]$. 
Let $\HessSp$ and $\HessSp^*$ be the Hessenberg spaces associated with $\hh$ and $\hh^*$, respectively. 
Then we have
\begin{align} \label{eq:Hdual}
\HessSp^* = \{w_0 {}^t\matA w_0 \mid \matA \in \HessSp \},
\end{align}
where ${}^t\matA$ denotes the transpose of a matrix $\matA$, and $w_0$ is the permutation matrix associated with the longest element of the symmetric group $S_n$.
In other words, $\HessSp^*$ is obtained from $\HessSp$ by flipping along the anti-diagonal line. 

\begin{example}
Considering $\hh=(3,4,4,5,5)$ given in Example~\ref{example:Hess_func_(3,4,4,5,5)}, its dual $\hh^*$ is given by $\hh^*=(2,4,5,5,5)$. 
The associated Hessenberg space $\HessSp^*$ is pictorially described in Figure~\ref{picture:dual_Hessenberg_function}.
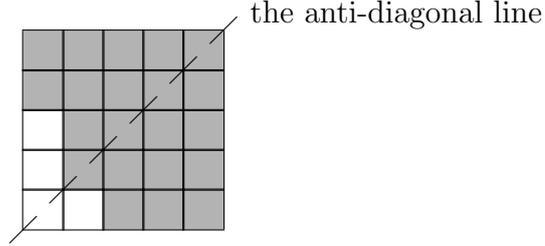
\begin{figure}[h]
\begin{center}
\begin{picture}(80,80)
\put(0,63){\colorbox{gray}}
\put(0,67){\colorbox{gray}}
\put(0,72){\colorbox{gray}}
\put(4,63){\colorbox{gray}}
\put(4,67){\colorbox{gray}}
\put(4,72){\colorbox{gray}}
\put(9,63){\colorbox{gray}}
\put(9,67){\colorbox{gray}}
\put(9,72){\colorbox{gray}}

\put(15,63){\colorbox{gray}}
\put(15,67){\colorbox{gray}}
\put(15,72){\colorbox{gray}}
\put(19,63){\colorbox{gray}}
\put(19,67){\colorbox{gray}}
\put(19,72){\colorbox{gray}}
\put(24,63){\colorbox{gray}}
\put(24,67){\colorbox{gray}}
\put(24,72){\colorbox{gray}}

\put(30,63){\colorbox{gray}}
\put(30,67){\colorbox{gray}}
\put(30,72){\colorbox{gray}}
\put(34,63){\colorbox{gray}}
\put(34,67){\colorbox{gray}}
\put(34,72){\colorbox{gray}}
\put(39,63){\colorbox{gray}}
\put(39,67){\colorbox{gray}}
\put(39,72){\colorbox{gray}}

\put(45,63){\colorbox{gray}}
\put(45,67){\colorbox{gray}}
\put(45,72){\colorbox{gray}}
\put(49,63){\colorbox{gray}}
\put(49,67){\colorbox{gray}}
\put(49,72){\colorbox{gray}}
\put(54,63){\colorbox{gray}}
\put(54,67){\colorbox{gray}}
\put(54,72){\colorbox{gray}}

\put(60,63){\colorbox{gray}}
\put(60,67){\colorbox{gray}}
\put(60,72){\colorbox{gray}}
\put(64,63){\colorbox{gray}}
\put(64,67){\colorbox{gray}}
\put(64,72){\colorbox{gray}}
\put(69,63){\colorbox{gray}}
\put(69,67){\colorbox{gray}}
\put(69,72){\colorbox{gray}}

\put(0,48){\colorbox{gray}}
\put(0,52){\colorbox{gray}}
\put(0,57){\colorbox{gray}}
\put(4,48){\colorbox{gray}}
\put(4,52){\colorbox{gray}}
\put(4,57){\colorbox{gray}}
\put(9,48){\colorbox{gray}}
\put(9,52){\colorbox{gray}}
\put(9,57){\colorbox{gray}}

\put(15,48){\colorbox{gray}}
\put(15,52){\colorbox{gray}}
\put(15,57){\colorbox{gray}}
\put(19,48){\colorbox{gray}}
\put(19,52){\colorbox{gray}}
\put(19,57){\colorbox{gray}}
\put(24,48){\colorbox{gray}}
\put(24,52){\colorbox{gray}}
\put(24,57){\colorbox{gray}}

\put(30,48){\colorbox{gray}}
\put(30,52){\colorbox{gray}}
\put(30,57){\colorbox{gray}}
\put(34,48){\colorbox{gray}}
\put(34,52){\colorbox{gray}}
\put(34,57){\colorbox{gray}}
\put(39,48){\colorbox{gray}}
\put(39,52){\colorbox{gray}}
\put(39,57){\colorbox{gray}}

\put(45,48){\colorbox{gray}}
\put(45,52){\colorbox{gray}}
\put(45,57){\colorbox{gray}}
\put(49,48){\colorbox{gray}}
\put(49,52){\colorbox{gray}}
\put(49,57){\colorbox{gray}}
\put(54,48){\colorbox{gray}}
\put(54,52){\colorbox{gray}}
\put(54,57){\colorbox{gray}}

\put(60,48){\colorbox{gray}}
\put(60,52){\colorbox{gray}}
\put(60,57){\colorbox{gray}}
\put(64,48){\colorbox{gray}}
\put(64,52){\colorbox{gray}}
\put(64,57){\colorbox{gray}}
\put(69,48){\colorbox{gray}}
\put(69,52){\colorbox{gray}}
\put(69,57){\colorbox{gray}}


\put(15,33){\colorbox{gray}}
\put(15,37){\colorbox{gray}}
\put(15,42){\colorbox{gray}}
\put(19,33){\colorbox{gray}}
\put(19,37){\colorbox{gray}}
\put(19,42){\colorbox{gray}}
\put(24,33){\colorbox{gray}}
\put(24,37){\colorbox{gray}}
\put(24,42){\colorbox{gray}}

\put(30,33){\colorbox{gray}}
\put(30,37){\colorbox{gray}}
\put(30,42){\colorbox{gray}}
\put(34,33){\colorbox{gray}}
\put(34,37){\colorbox{gray}}
\put(34,42){\colorbox{gray}}
\put(39,33){\colorbox{gray}}
\put(39,37){\colorbox{gray}}
\put(39,42){\colorbox{gray}}

\put(45,33){\colorbox{gray}}
\put(45,37){\colorbox{gray}}
\put(45,42){\colorbox{gray}}
\put(49,33){\colorbox{gray}}
\put(49,37){\colorbox{gray}}
\put(49,42){\colorbox{gray}}
\put(54,33){\colorbox{gray}}
\put(54,37){\colorbox{gray}}
\put(54,42){\colorbox{gray}}

\put(60,33){\colorbox{gray}}
\put(60,37){\colorbox{gray}}
\put(60,42){\colorbox{gray}}
\put(64,33){\colorbox{gray}}
\put(64,37){\colorbox{gray}}
\put(64,42){\colorbox{gray}}
\put(69,33){\colorbox{gray}}
\put(69,37){\colorbox{gray}}
\put(69,42){\colorbox{gray}}


\put(15,18){\colorbox{gray}}
\put(15,22){\colorbox{gray}}
\put(15,27){\colorbox{gray}}
\put(19,18){\colorbox{gray}}
\put(19,22){\colorbox{gray}}
\put(19,27){\colorbox{gray}}
\put(24,18){\colorbox{gray}}
\put(24,22){\colorbox{gray}}
\put(24,27){\colorbox{gray}}

\put(30,18){\colorbox{gray}}
\put(30,22){\colorbox{gray}}
\put(30,27){\colorbox{gray}}
\put(34,18){\colorbox{gray}}
\put(34,22){\colorbox{gray}}
\put(34,27){\colorbox{gray}}
\put(39,18){\colorbox{gray}}
\put(39,22){\colorbox{gray}}
\put(39,27){\colorbox{gray}}

\put(45,18){\colorbox{gray}}
\put(45,22){\colorbox{gray}}
\put(45,27){\colorbox{gray}}
\put(49,18){\colorbox{gray}}
\put(49,22){\colorbox{gray}}
\put(49,27){\colorbox{gray}}
\put(54,18){\colorbox{gray}}
\put(54,22){\colorbox{gray}}
\put(54,27){\colorbox{gray}}

\put(60,18){\colorbox{gray}}
\put(60,22){\colorbox{gray}}
\put(60,27){\colorbox{gray}}
\put(64,18){\colorbox{gray}}
\put(64,22){\colorbox{gray}}
\put(64,27){\colorbox{gray}}
\put(69,18){\colorbox{gray}}
\put(69,22){\colorbox{gray}}
\put(69,27){\colorbox{gray}}



\put(30,3){\colorbox{gray}}
\put(30,7){\colorbox{gray}}
\put(30,12){\colorbox{gray}}
\put(34,3){\colorbox{gray}}
\put(34,7){\colorbox{gray}}
\put(34,12){\colorbox{gray}}
\put(39,3){\colorbox{gray}}
\put(39,7){\colorbox{gray}}
\put(39,12){\colorbox{gray}}

\put(45,3){\colorbox{gray}}
\put(45,7){\colorbox{gray}}
\put(45,12){\colorbox{gray}}
\put(49,3){\colorbox{gray}}
\put(49,7){\colorbox{gray}}
\put(49,12){\colorbox{gray}}
\put(54,3){\colorbox{gray}}
\put(54,7){\colorbox{gray}}
\put(54,12){\colorbox{gray}}

\put(60,3){\colorbox{gray}}
\put(60,7){\colorbox{gray}}
\put(60,12){\colorbox{gray}}
\put(64,3){\colorbox{gray}}
\put(64,7){\colorbox{gray}}
\put(64,12){\colorbox{gray}}
\put(69,3){\colorbox{gray}}
\put(69,7){\colorbox{gray}}
\put(69,12){\colorbox{gray}}

\put(0,0){\framebox(15,15)}
\put(15,0){\framebox(15,15)}
\put(30,0){\framebox(15,15)}
\put(45,0){\framebox(15,15)}
\put(60,0){\framebox(15,15)}
\put(0,15){\framebox(15,15)}
\put(15,15){\framebox(15,15)}
\put(30,15){\framebox(15,15)}
\put(45,15){\framebox(15,15)}
\put(60,15){\framebox(15,15)}
\put(0,30){\framebox(15,15)}
\put(15,30){\framebox(15,15)}
\put(30,30){\framebox(15,15)}
\put(45,30){\framebox(15,15)}
\put(60,30){\framebox(15,15)}
\put(0,45){\framebox(15,15)}
\put(15,45){\framebox(15,15)}
\put(30,45){\framebox(15,15)}
\put(45,45){\framebox(15,15)}
\put(60,45){\framebox(15,15)}
\put(0,60){\framebox(15,15)}
\put(15,60){\framebox(15,15)}
\put(30,60){\framebox(15,15)}
\put(45,60){\framebox(15,15)}
\put(60,60){\framebox(15,15)}

\put(-5,-5){\line(1,1){10}}
\put(10,10){\line(1,1){10}}
\put(25,25){\line(1,1){10}}
\put(40,40){\line(1,1){10}}
\put(55,55){\line(1,1){10}}
\put(70,70){\line(1,1){10}}

\put(85,78){the anti-diagonal line}
\end{picture}
\end{center}
\vspace{-10pt}
\caption{The configuration corresponding to $\hh^*=(2,4,5,5,5)$.}
\label{picture:dual_Hessenberg_function}
\end{figure} 
\end{example}

\begin{lemma} \label{lemma:dual_homeo}
Let $\hh: [n] \rightarrow [n]$ be a Hessenberg function and $\hh^*$ its dual.
Then $\Hess(\matN,\hh)$ is homeomorphic to $\Hess(\matN,\hh^*)$. 
In particular, we have an isomorphism 
\begin{align*}
H^*(\Hess(\matN,\hh)) \cong H^*(\Hess(\matN,\hh^*))
\end{align*}
as graded $\C$-algebras. 
\end{lemma}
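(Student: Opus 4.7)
The plan is to construct an explicit homeomorphism $\phi : \Fl(\C^n) \to \Fl(\C^n)$ that restricts to a homeomorphism between the two Hessenberg varieties. The cohomology statement will then follow immediately from functoriality of singular cohomology.

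First, I would introduce the map
\begin{align*}
\phi : \GL_n(\C)/B \to \GL_n(\C)/B, \qquad gB \mapsto w_0 \, {}^t(g^{-1}) \, w_0 \, B.
\end{align*}
To see that $\phi$ is well-defined on the quotient, one checks that if $g' = gb$ with $b \in B$, then
$w_0 \, {}^t(g'^{-1}) \, w_0 = w_0 \, {}^t(g^{-1}) \, w_0 \cdot \bigl(w_0 \, {}^t(b^{-1}) \, w_0\bigr)$,
and $w_0 \, {}^t(b^{-1}) \, w_0$ is upper triangular since conjugating a lower triangular matrix by $w_0$ yields an upper triangular matrix. A direct computation shows $\phi^2 = \mathrm{id}$, so $\phi$ is an involutive biholomorphism, hence in particular a homeomorphism of $\Fl(\C^n)$.

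Next, I would verify the key algebraic identity
\begin{align*}
w_0 \, {}^t \matN \, w_0 = \matN.
\end{align*}
Writing $(w_0)_{ij} = \delta_{i,\, n+1-j}$ gives $(w_0 \matA w_0)_{ij} = \matA_{n+1-i, \, n+1-j}$ for any matrix $\matA$. Applied to $\matN$ (which has $1$'s only on the super-diagonal), this produces a matrix with $1$'s on the sub-diagonal, which is ${}^t\matN$; equivalently, $\matN = w_0 \, {}^t\matN \, w_0$. Now set $h = w_0 \, {}^t(g^{-1}) \, w_0$, so $h^{-1} = w_0 \, {}^tg \, w_0$, and compute
\begin{align*}
h^{-1} \matN \, h = w_0 \, {}^tg \, w_0 \cdot \matN \cdot w_0 \, {}^t(g^{-1}) \, w_0 = w_0 \, {}^t(g^{-1} \matN g) \, w_0,
\end{align*}
using $w_0 \matN w_0 = {}^t\matN$ in the middle. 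By the characterization \eqref{eq:Hdual} of $\HessSp^*$, the condition $g^{-1}\matN g \in \HessSp$ is equivalent to $w_0 \, {}^t(g^{-1}\matN g) \, w_0 \in \HessSp^*$, i.e.\ $h^{-1} \matN h \in \HessSp^*$. In terms of the defining condition \eqref{eq:Hessenberg_variety}, this says $gB \in \Hess(\matN, \hh)$ if and only if $\phi(gB) \in \Hess(\matN, \hh^*)$. Therefore $\phi$ restricts to a homeomorphism $\Hess(\matN, \hh) \cong \Hess(\matN, \hh^*)$, and the induced ring isomorphism $H^*(\Hess(\matN,\hh)) \cong H^*(\Hess(\matN,\hh^*))$ is automatic.

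I do not anticipate a serious obstacle here: the whole argument is driven by the identity $w_0 \, {}^t\matN \, w_0 = \matN$, and the only care needed is to check that the transpose-inverse twist combined with conjugation by $w_0$ descends to a well-defined self-map of $\GL_n(\C)/B$. The mild subtlety is bookkeeping of which Hessenberg space corresponds to $\hh$ versus $\hh^*$, but this is precisely what \eqref{eq:Hdual} is set up to record.
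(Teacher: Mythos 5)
Your proof is correct and is essentially the same argument the paper gives: both construct a self-map of the flag variety built from conjugation by $w_0$ together with a transpose-type twist, and both hinge on the identity $w_0\,{}^t\matN\,w_0 = \matN$ and the description \eqref{eq:Hdual} of $\HessSp^*$. The only cosmetic difference is that you work directly on $\GL_n(\C)/B$ via $gB \mapsto w_0\,{}^t(g^{-1})\,w_0\,B$, whereas the paper passes to the compact model $\U(n)/T$ and uses $uT \mapsto w_0\bar{u}w_0\,T$; since ${}^t(u^{-1})=\bar{u}$ for unitary $u$, these are literally the same map, and your version has the mild advantage of being visibly a biholomorphism without invoking the unitary model.
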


\begin{proof}
Recall that the flag variety $\GL_n(\C)/B$ is identified with $\U(n)/T$ where $\U(n)$ is the unitary group of degree $n$ and $T$ is the set of diagonal matrices in $\U(n)$. 
Note that $w_0 \, {}^t\matN \, w_0 =\matN$. 
By \eqref{eq:Hdual} we have 
\begin{align*}
u^{-1} \matN u \in \HessSp \iff {}^{t}(w_0 u w_0) \, \matN \, {}^{t}(w_0 u^{-1} w_0) \in \HessSp^*
\end{align*}
for any $u \in \U(n)$.
Thus the map $\U(n)/T \to \U(n)/T$ by sending $uT$ to $(w_0 \bar{u} w_0 ) T$ yields an homeomorphism from $\Hess(\matN,\hh)$ to $\Hess(\matN,\hh^*)$ where $\bar{u}$ denotes the matrix with complex conjugated entries of $u$.
\end{proof}

\section{Coordinate rings and quantization} \label{sect:quantization and coordinate rings}

Recall that the set $U$ in \eqref{eq:open set U} is naturally identified with an open set around the identity permutation $e \in S_n$ in the flag variety $\GL_n(\C)/B$. 
We denote the coordinate functions on $U$ by $z_{ij}$, i.e. 
\begin{align*}
z_{ij}(g) = x_{ij} \ \ \ \textrm{for} \ g \in U.
\end{align*} 
Consider the open subscheme $\ZZ(\matN,\hh)_e$ by the restriction of $\ZZ(\matN,\hh)$ to the open set $U$. 
To be more precise, by setting 
\begin{align} \label{eq:nuij}
\nu_{i,j}(g) \coloneqq (g^{-1} \matN g)_{ij}
\end{align} 
for $g \in U$, we can write
\begin{align*} 
\ZZ(\matN,\hh)_e = \Spec \C[z_{ij} \mid 1 \leq j < i \leq n]/(\nu_{i,j} \mid i > \hh(j)).  
\end{align*}
Equivalently, the set of global sections $\Gamma(\ZZ(\matN,\HessSp)_e, \mathcal{O}_{\ZZ(\matN,\HessSp)_e})$ is 
\begin{align} \label{eq:global_section_regular_nilpotent} 
\Gamma(\ZZ(\matN,\hh)_e, \mathcal{O}_{\ZZ(\matN,\hh)_e}) \cong \C[z_{ij} \mid 1 \leq j < i \leq n]/(\nu_{i,j} \mid i > \hh(j)).
\end{align}
Note that if $\hh(j) >j$ for all $j \in [n-1]$, then $\Gamma(\ZZ(\matN,\hh)_e, \mathcal{O}_{\ZZ(\matN,\hh)_e})$ is the coordinate ring of the (set-theoretic) intersection $\Hess(\matN,\hh) \cap U$ by Proposition~\ref{proposition:reduced}.
We define 
\begin{align*} 
\deg z_{ij} = 2(i-j)
\end{align*}
for $1 \leq j < i \leq n$.
Then $\nu_{i,j}$ defined in \eqref{eq:nuij} is a homogeneous polynomial of degree $2(i-j+1)$ in the polynomial ring $\C[z_{ij} \mid 1 \leq j < i \leq n]$ by \cite[Lemma~6.4]{HorShi}.
Hence, $\Gamma(\ZZ(\matN,\hh)_e, \mathcal{O}_{\ZZ(\matN,\hh)_e})
$ in \eqref{eq:global_section_regular_nilpotent} has a grading.
Peterson found a connection between $\Gamma(\ZZ(\matN,\hh)_e, \mathcal{O}_{\ZZ(\matN,\hh)_e})$ for $\hh=(2,3,4,\ldots,n,n)$ and the quantum cohomology of $\Fl(\C^n)$, as explained below.

Consider the polynomial ring $\C[x_1,\ldots,x_n,q_{rs} \mid 1 \leq r < s \leq n]$ equipped with a grading defined by \eqref{eq:degree xi} and 
\begin{align*} 
\deg q_{rs} = 2(s-r+1) 
\end{align*} 
for all $1 \leq r < s \leq n$.
We call the variables $q_{rs}$ \emph{quantum parameters}.
Let $M_n$ be the matrix defined by
\begin{align*}
M_n \coloneqq \left(
 \begin{array}{@{\,}ccccc@{\,}}
     x_1 & q_{12} & q_{13} & \cdots & q_{1n} \\
     -1 & x_2 & q_{23} & \cdots & q_{2n} \\ 
      0 & \ddots & \ddots & \ddots & \vdots \\ 
      \vdots & \ddots & -1 & x_{n-1} & q_{n-1 \, n} \\
      0 & \cdots & 0 & -1 & x_n 
 \end{array}
 \right).
\end{align*}
We then define the \emph{$q_{rs}$-quantized elementary symmetric polynomials} $E_1^{(n)}, \ldots, E_n^{(n)}$ in the polynomial ring $\C[x_1,\ldots,x_n,q_{rs} \mid 1 \leq r < s \leq n]$ by 
\begin{align*}
\det(\lambda I_n - M_n) = \lambda^n - E_1^{(n)} \lambda^{n-1} + E_2^{(n)} \lambda^{n-2} + \cdots + (-1)^n E_n^{(n)},
\end{align*}
where $I_n$ is the identity matrix of order $n$. 
Namely, $E_i^{(n)}$ is the coefficient of $\lambda^{n-i}$ for $\det(\lambda I_n - M_n)$ multiplied by $(-1)^i$. 
The recursive formula for $E_i^{(n)}$ is described as follows (\cite[Lemma~4.8]{HorShi}):
\begin{align} \label{eq:recursive quantized elementary symmetric polynomials}
E_i^{(n)} = E_i^{(n-1)} + E_{i-1}^{(n-1)} x_n + \sum_{k=1}^{i-1} E_{i-1-k}^{(n-1-k)} q_{n-k \, n}  \ \ \ \textrm{for} \ 1 \leq i \leq n \ 
\end{align}
with the convention that $E_0^{(*)}=1$ for arbitrary $*$, $\sum_{k=1}^{i-1} E_{i-1-k}^{(n-1-k)} q_{n-k \, n}=0$ whenever $i=1$, and $E_{n}^{(n-1)}=0$ whenever $i=n$. 
It is straightforward from the recursive formula above to see that $E_i^{(n)}$ is a homogeneous polynomial of degree $2i$ in the polynomial ring $\C[x_1,\ldots,x_n,q_{rs} \mid 1 \leq r < s \leq n]$ by \cite[Lemma~6.2]{HorShi}.
An explicit formula for $E_i^{(n)}$ is described as follows.

\begin{lemma} \label{lemma:Ein}
For a consecutive string $J=[r,s]\coloneqq \{r, r+1, \ldots, s \}$ with $r \leq s$, we set $\rho_J = q_{rs}$ with the convention that $\rho_J = x_r$ whenever $r=s$.
Then we have
\begin{align*} 
E_i^{(n)} = \sum \rho_{J_1} \rho_{J_2} \cdots \rho_{J_m} \ \ \ \textrm{for} \ 1 \leq i \leq n,
\end{align*}
where the sum runs over all consecutive substrings $J_1, J_2, \ldots, J_m$ of $[n]$ such that $\bigcap_{k=1}^m J_k = \emptyset$ and $\sum_{k=1}^m |J_k| = i$. 
\end{lemma}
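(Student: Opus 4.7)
The plan is a straightforward induction on $n$, exploiting the recursive formula \eqref{eq:recursive quantized elementary symmetric polynomials} that $E_i^{(n)}$ already satisfies. The base case $n = i = 1$ is immediate: $E_1^{(1)} = x_1 = \rho_{[1,1]}$, and $\{[1,1]\}$ is the unique admissible collection.

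For the inductive step, I would classify each admissible collection $\{J_1, \ldots, J_m\}$ of pairwise disjoint consecutive substrings of $[n]$ with total size $i$ according to how it interacts with the element $n$. Since any two consecutive substrings of $[n]$ that contain $n$ must overlap, at most one member of the collection can touch $n$, which yields three mutually exclusive cases. In case (a), no $J_k$ contains $n$, so the whole collection sits in $[n-1]$ and contributes $E_i^{(n-1)}$ by the inductive hypothesis. In case (b), exactly one member equals the singleton $\{n\}$; the remaining substrings form an admissible collection in $[n-1]$ of total size $i-1$, contributing $x_n \cdot E_{i-1}^{(n-1)}$. In case (c), exactly one member equals $[n-k, n]$ for some $k \geq 1$; the remaining substrings are disjoint from $[n-k, n]$, hence lie in $[n-k-1]$ with total size $i - 1 - k$, contributing $q_{n-k,\,n} \cdot E_{i-1-k}^{(n-1-k)}$. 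Summing cases (a)--(c) with $k$ ranging over $1, \ldots, i-1$ reproduces exactly the right-hand side of the recursion \eqref{eq:recursive quantized elementary symmetric polynomials}. Since $E_i^{(n)}$ is uniquely determined by that recursion together with the base case, the proposed formula must equal $E_i^{(n)}$.

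The only real subtlety is combinatorial bookkeeping: one must verify that the three cases are exhaustive and mutually exclusive, confirm that the range $k \in \{1, \ldots, i-1\}$ in case (c) is the correct one, and check that when $i - 1 - k = 0$ the ``empty collection'' on $[n-k-1]$ correctly matches the convention $E_0^{(*)} = 1$ built into the recursion. An alternative, more transparent route is a direct expansion of $\det(\lambda I_n - M_n)$: the permutations contributing nonzero terms are precisely those whose cycle decomposition is supported on pairwise disjoint consecutive substrings, and a careful sign bookkeeping extracts the coefficient of $\lambda^{n-i}$ in the desired form. This combinatorial route, while cleaner conceptually, requires tracking cycle signs more delicately, so the inductive approach is the safer route for a short proof.
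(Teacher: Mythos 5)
Your proposal is correct and takes essentially the same approach as the paper: induction on $n$ using the recursion \eqref{eq:recursive quantized elementary symmetric polynomials}. The paper's proof is terser (it merely invokes the recursion and the inductive hypothesis), whereas you spell out the case analysis according to whether and how a block of the collection touches $n$; that bookkeeping is exactly what is implicitly being checked in the paper.
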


\begin{proof}
We prove this by induction on $n$. 
The base case when $n=1$ is clear.
Assume that $n > 1$ and the claim holds for arbitrary $n'$ with $n' <n$.
Then the recursive formula in \eqref{eq:recursive quantized elementary symmetric polynomials} with the inductive assumption on $n$ yields the claim.
\end{proof}

\begin{example}
For $n \leq 3$, the $q_{rs}$-quantized elementary symmetric polynomials $E_i^{(n)}$ are described as follows:
\begin{align*}
&E_1^{(1)} = \rho_{\{1\}} =x_1, \ E_1^{(2)} = \rho_{\{1\}} + \rho_{\{2\}} = x_1+x_2, \\
&E_1^{(3)} = \rho_{\{1\}} + \rho_{\{2\}} + \rho_{\{3\}} = x_1+x_2+x_3, \ E_2^{(2)} = \rho_{\{1\}}\rho_{\{2\}} + \rho_{\{1,2\}} = x_1x_2+q_{12}, \\ 
&E_2^{(3)} = \rho_{\{1\}}\rho_{\{2\}} + \rho_{\{1\}}\rho_{\{3\}} + \rho_{\{2\}}\rho_{\{3\}} + \rho_{\{1,2\}} + \rho_{\{2,3\}} = x_1x_2+x_1x_3+x_2x_3+q_{12}+q_{23}, \\
&E_3^{(3)} = \rho_{\{1\}}\rho_{\{2\}}\rho_{\{3\}} + \rho_{\{1\}}\rho_{\{2,3\}} + \rho_{\{1,2\}}\rho_{\{3\}} + \rho_{\{1,2,3\}}= x_1x_2x_3+q_{23}x_1+q_{12}x_3+q_{13}. 
\end{align*}
\end{example} 

We set 
\begin{align} \label{eq:Qn}
Q_n \coloneqq \C[x_1,\ldots,x_n,q_{rs} \mid 1 \leq r < s \leq n]/(E_1^{(n)}, \ldots, E_n^{(n)}).
\end{align}
By slight abuse of notation, we denote by $F \in Q_n$ the image of $F$ in the polynomial ring $\C[x_1,\ldots,x_n,q_{rs} \mid 1 \leq r < s \leq n]$ under the natural surjection $\C[x_1,\ldots,x_n,q_{rs} \mid 1 \leq r < s \leq n] \rightarrow Q_n$. 
We define the map 
\begin{align} \label{eq:varphi}
\varphi: \C[z_{ij} \mid 1 \leq j < i \leq n] \to Q_n; \ \ \ z_{ij} \mapsto E_{i-j}^{(n-j)}.
\end{align}

\begin{theorem} $($ \cite[Theorem~4.13 and Proposition~5.2]{HorShi} $)$ \label{theorem:HorShi}
The map $\varphi$ in \eqref{eq:varphi} is an isomorphism of graded $\C$-algebras. 
We also have
\begin{align*}
\varphi^{-1}(q_{rs}) = -\nu_{n+1-r, \, n+1-s} 
\end{align*}
for $1 \leq r < s \leq n$.
In particular, for any Hessenberg function $\hh: [n] \to [n]$, we obtain an isomorphism 
\begin{align} \label{eq:Coordinate_ring_regular_nilpotent}
\Gamma(\ZZ(\matN,\hh)_e, \mathcal{O}_{\ZZ(\matN,\hh)_e}) \cong \frac{\C[x_1,\ldots,x_n, q_{rs} \mid 2 \leq s \leq n, n-\hh(n+1-s)<r<s]}{({}^{\hh}E_1^{(n)}, \ldots, {}^{\hh}E_n^{(n)})}, 
\end{align}
as graded $\C$-algebras where ${}^{\hh}E_i^{(n)}$ is defined by 
\begin{align*}
{}^{\hh}E_i^{(n)} \coloneqq E_i^{(n)}|_{q_{rs}=0 \ (2 \leq s \leq n \ \textrm{and} \ 1 \leq r \leq n-\hh(n+1-s))}. 
\end{align*}
\end{theorem}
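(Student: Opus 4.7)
The plan is to establish the isomorphism $\varphi$ in three stages: show it is a well-defined graded $\C$-algebra homomorphism, prove bijectivity by combining a Hilbert-series calculation with surjectivity on generators, and identify $\varphi^{-1}(q_{rs})$ via an explicit matrix computation. The general-$\hh$ isomorphism \eqref{eq:Coordinate_ring_regular_nilpotent} then follows by pushing the defining ideal of $\Gamma(\ZZ(\matN,\hh)_e,\mathcal{O}_{\ZZ(\matN,\hh)_e})$ across $\varphi$.

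Well-definedness is automatic since the domain is a polynomial ring, and the grading is preserved because the recursion \eqref{eq:recursive quantized elementary symmetric polynomials} gives $\deg E_k^{(m)} = 2k$, whence $\deg \varphi(z_{ij}) = 2(i-j) = \deg z_{ij}$. For bijectivity I would first prove that $E_1^{(n)}, \ldots, E_n^{(n)}$ form a regular sequence in $A = \C[x_1,\ldots,x_n,q_{rs}]$; specializing $q_{rs}=0$ reduces to the classical fact that the elementary symmetric polynomials $e_i^{(n)}$ are a regular sequence in $\C[x_1,\ldots,x_n]$, and a flat-deformation argument over $\C[q_{rs}]$ lifts this back to $A$. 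The Koszul resolution then yields
\begin{align*}
\operatorname{Hilb}(Q_n,t) = \frac{\prod_{i=1}^n(1-t^{2i})}{(1-t^2)^n \prod_{1\leq r<s\leq n}(1-t^{2(s-r+1)})} = \prod_{1\leq j<i\leq n}\frac{1}{1-t^{2(i-j)}},
\end{align*}
matching the Hilbert series of the domain. For surjectivity, the relation $E_1^{(n)}=0$ in $Q_n$ forces $x_n = -E_1^{(n-1)} = -\varphi(z_{2,1})$, and then $E_i^{(n)}=0$ for $i=2,\ldots,n$ successively expresses $q_{n-1,n}, q_{n-2,n}, \ldots, q_{1n}$ as polynomials in the $E_k^{(m)}$. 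Iterating this elimination downward in the ``top-right'' variables places every generator of $A$ modulo $(E_i^{(n)})$ in the image of $\varphi$, and combined with the Hilbert-series match this forces $\varphi$ to be an isomorphism.

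The identification $\varphi^{-1}(q_{rs}) = -\nu_{n+1-r,n+1-s}$ requires a direct matrix computation. The strategy is to show that for $g \in U$ the conjugate $g^{-1}\matN g$, after applying the anti-diagonal involution $X \mapsto w_0 \, {}^tX \, w_0$ and a further lower-unipotent conjugation, has the same companion-like shape as $M_n$: diagonal entries giving certain $x_i$'s, subdiagonal $-1$'s, and strict upper-triangular entries becoming $-\nu_{n+1-r,n+1-s}$. Comparing characteristic polynomials of the two matrices in the same similarity class, one matches the coefficients $E_k^{(m)}$ against $\varphi(z_{k+j,j})$, and a Cramer-type inversion isolates $\varphi^{-1}(q_{rs})$. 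For the final claim, $\varphi$ sends the ideal $(\nu_{i,j} \mid i > \hh(j))$ to $(q_{n+1-i,n+1-j} \mid i > \hh(j))$; re-indexing $r = n+1-i$, $s = n+1-j$ turns the condition $i > \hh(j)$ into $r \leq n - \hh(n+1-s)$, producing exactly the right-hand side of \eqref{eq:Coordinate_ring_regular_nilpotent} with $E_i^{(n)}$ replaced by its specialization ${}^{\hh}E_i^{(n)}$.

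The main obstacle is the matrix identification giving $\varphi^{-1}(q_{rs})$, since relating the explicit polynomial entries of $g^{-1}\matN g$ to the abstract parameters $q_{rs}$ of $M_n$ requires careful bookkeeping of signs and the anti-diagonal involution through $w_0$. By contrast, the regular-sequence and Hilbert-series computations, while technical, follow standard commutative-algebra templates.
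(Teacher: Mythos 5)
This theorem is quoted from \cite{HorShi} (their Theorem~4.13 and Proposition~5.2); the present paper does not prove it, so there is no internal proof to compare your proposal against. Judged on its own merits the outline is structurally sound: the Hilbert-series identity you write is correct (both sides reduce to $\prod_{d=1}^{n-1}(1-t^{2d})^{d-n}$), the regular-sequence claim for $E_1^{(n)},\ldots,E_n^{(n)}$ follows from the dimension argument you gesture at, and surjectivity of $\varphi$ does follow from \eqref{eq:recursive quantized elementary symmetric polynomials}, since $q_{m-i+1,m}$ appears in $E_i^{(m)}$ linearly with unit coefficient and so can be eliminated inductively.

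The substance of the theorem, however, lies in the explicit formula $\varphi^{-1}(q_{rs})=-\nu_{n+1-r,\,n+1-s}$, and here your proposal stops at a strategy rather than a proof: you never exhibit the unipotent conjugation that brings $w_0\,{}^t(g^{-1}\matN g)\,w_0$ into the companion-like shape of $M_n$, nor do you check that the resulting diagonal entries are compatible with the already-defined images $\varphi(z_{ij})=E_{i-j}^{(n-j)}$ on the nose and not merely modulo the ideal $(E_1^{(n)},\ldots,E_n^{(n)})$. A determinantal expression for $\nu_{i,j}$ as in Lemma~\ref{lemma:defining_functions} would be the natural tool, but as written the crucial inversion is asserted, not derived. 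Your final passage from the inversion formula to \eqref{eq:Coordinate_ring_regular_nilpotent} via the re-indexing $r=n+1-i$, $s=n+1-j$ is correct.
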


\begin{remark}
One can pictorially see which variables $q_{rs}$ in the definition of ${}^{\hh}E_i^{(n)}$ are zero.
Consider the matrix $w_0 M_n w_0$ where $w_0$ is the longest element of the symmetric group $S_n$ (identified with the associated permutation matrix).
Then $w_0 M_n w_0 \in \HessSp$ if and only if $q_{rs}=0$ for all $2 \leq s \leq n$ and $1 \leq r \leq n-\hh(n+1-s)$ where $\HessSp$ is the Hessenberg space associated with $\hh$. 
For example, if $\hh=(3,4,4,5,5)$, then ${}^{\hh}E_i^{(5)}$ is a polynomial obtained from $E_i^{(5)}$ by setting $q_{13}=q_{14}=q_{15}=q_{25}=0$, as shown in Figure~\ref{picture:qrs}.
\begin{figure}[h]
\begin{center}
\begin{picture}(75,75)
\put(0,63){\colorbox{gray}}
\put(0,67){\colorbox{gray}}
\put(0,72){\colorbox{gray}}
\put(4,63){\colorbox{gray}}
\put(4,67){\colorbox{gray}}
\put(4,72){\colorbox{gray}}
\put(9,63){\colorbox{gray}}
\put(9,67){\colorbox{gray}}
\put(9,72){\colorbox{gray}}

\put(15,63){\colorbox{gray}}
\put(15,67){\colorbox{gray}}
\put(15,72){\colorbox{gray}}
\put(19,63){\colorbox{gray}}
\put(19,67){\colorbox{gray}}
\put(19,72){\colorbox{gray}}
\put(24,63){\colorbox{gray}}
\put(24,67){\colorbox{gray}}
\put(24,72){\colorbox{gray}}

\put(30,63){\colorbox{gray}}
\put(30,67){\colorbox{gray}}
\put(30,72){\colorbox{gray}}
\put(34,63){\colorbox{gray}}
\put(34,67){\colorbox{gray}}
\put(34,72){\colorbox{gray}}
\put(39,63){\colorbox{gray}}
\put(39,67){\colorbox{gray}}
\put(39,72){\colorbox{gray}}

\put(45,63){\colorbox{gray}}
\put(45,67){\colorbox{gray}}
\put(45,72){\colorbox{gray}}
\put(49,63){\colorbox{gray}}
\put(49,67){\colorbox{gray}}
\put(49,72){\colorbox{gray}}
\put(54,63){\colorbox{gray}}
\put(54,67){\colorbox{gray}}
\put(54,72){\colorbox{gray}}

\put(60,63){\colorbox{gray}}
\put(60,67){\colorbox{gray}}
\put(60,72){\colorbox{gray}}
\put(64,63){\colorbox{gray}}
\put(64,67){\colorbox{gray}}
\put(64,72){\colorbox{gray}}
\put(69,63){\colorbox{gray}}
\put(69,67){\colorbox{gray}}
\put(69,72){\colorbox{gray}}

\put(0,48){\colorbox{gray}}
\put(0,52){\colorbox{gray}}
\put(0,57){\colorbox{gray}}
\put(4,48){\colorbox{gray}}
\put(4,52){\colorbox{gray}}
\put(4,57){\colorbox{gray}}
\put(9,48){\colorbox{gray}}
\put(9,52){\colorbox{gray}}
\put(9,57){\colorbox{gray}}

\put(15,48){\colorbox{gray}}
\put(15,52){\colorbox{gray}}
\put(15,57){\colorbox{gray}}
\put(19,48){\colorbox{gray}}
\put(19,52){\colorbox{gray}}
\put(19,57){\colorbox{gray}}
\put(24,48){\colorbox{gray}}
\put(24,52){\colorbox{gray}}
\put(24,57){\colorbox{gray}}

\put(30,48){\colorbox{gray}}
\put(30,52){\colorbox{gray}}
\put(30,57){\colorbox{gray}}
\put(34,48){\colorbox{gray}}
\put(34,52){\colorbox{gray}}
\put(34,57){\colorbox{gray}}
\put(39,48){\colorbox{gray}}
\put(39,52){\colorbox{gray}}
\put(39,57){\colorbox{gray}}

\put(45,48){\colorbox{gray}}
\put(45,52){\colorbox{gray}}
\put(45,57){\colorbox{gray}}
\put(49,48){\colorbox{gray}}
\put(49,52){\colorbox{gray}}
\put(49,57){\colorbox{gray}}
\put(54,48){\colorbox{gray}}
\put(54,52){\colorbox{gray}}
\put(54,57){\colorbox{gray}}

\put(60,48){\colorbox{gray}}
\put(60,52){\colorbox{gray}}
\put(60,57){\colorbox{gray}}
\put(64,48){\colorbox{gray}}
\put(64,52){\colorbox{gray}}
\put(64,57){\colorbox{gray}}
\put(69,48){\colorbox{gray}}
\put(69,52){\colorbox{gray}}
\put(69,57){\colorbox{gray}}

\put(0,33){\colorbox{gray}}
\put(0,37){\colorbox{gray}}
\put(0,42){\colorbox{gray}}
\put(4,33){\colorbox{gray}}
\put(4,37){\colorbox{gray}}
\put(4,42){\colorbox{gray}}
\put(9,33){\colorbox{gray}}
\put(9,37){\colorbox{gray}}
\put(9,42){\colorbox{gray}}

\put(15,33){\colorbox{gray}}
\put(15,37){\colorbox{gray}}
\put(15,42){\colorbox{gray}}
\put(19,33){\colorbox{gray}}
\put(19,37){\colorbox{gray}}
\put(19,42){\colorbox{gray}}
\put(24,33){\colorbox{gray}}
\put(24,37){\colorbox{gray}}
\put(24,42){\colorbox{gray}}

\put(30,33){\colorbox{gray}}
\put(30,37){\colorbox{gray}}
\put(30,42){\colorbox{gray}}
\put(34,33){\colorbox{gray}}
\put(34,37){\colorbox{gray}}
\put(34,42){\colorbox{gray}}
\put(39,33){\colorbox{gray}}
\put(39,37){\colorbox{gray}}
\put(39,42){\colorbox{gray}}

\put(45,33){\colorbox{gray}}
\put(45,37){\colorbox{gray}}
\put(45,42){\colorbox{gray}}
\put(49,33){\colorbox{gray}}
\put(49,37){\colorbox{gray}}
\put(49,42){\colorbox{gray}}
\put(54,33){\colorbox{gray}}
\put(54,37){\colorbox{gray}}
\put(54,42){\colorbox{gray}}

\put(60,33){\colorbox{gray}}
\put(60,37){\colorbox{gray}}
\put(60,42){\colorbox{gray}}
\put(64,33){\colorbox{gray}}
\put(64,37){\colorbox{gray}}
\put(64,42){\colorbox{gray}}
\put(69,33){\colorbox{gray}}
\put(69,37){\colorbox{gray}}
\put(69,42){\colorbox{gray}}


\put(15,18){\colorbox{gray}}
\put(15,22){\colorbox{gray}}
\put(15,27){\colorbox{gray}}
\put(19,18){\colorbox{gray}}
\put(19,22){\colorbox{gray}}
\put(19,27){\colorbox{gray}}
\put(24,18){\colorbox{gray}}
\put(24,22){\colorbox{gray}}
\put(24,27){\colorbox{gray}}

\put(30,18){\colorbox{gray}}
\put(30,22){\colorbox{gray}}
\put(30,27){\colorbox{gray}}
\put(34,18){\colorbox{gray}}
\put(34,22){\colorbox{gray}}
\put(34,27){\colorbox{gray}}
\put(39,18){\colorbox{gray}}
\put(39,22){\colorbox{gray}}
\put(39,27){\colorbox{gray}}

\put(45,18){\colorbox{gray}}
\put(45,22){\colorbox{gray}}
\put(45,27){\colorbox{gray}}
\put(49,18){\colorbox{gray}}
\put(49,22){\colorbox{gray}}
\put(49,27){\colorbox{gray}}
\put(54,18){\colorbox{gray}}
\put(54,22){\colorbox{gray}}
\put(54,27){\colorbox{gray}}

\put(60,18){\colorbox{gray}}
\put(60,22){\colorbox{gray}}
\put(60,27){\colorbox{gray}}
\put(64,18){\colorbox{gray}}
\put(64,22){\colorbox{gray}}
\put(64,27){\colorbox{gray}}
\put(69,18){\colorbox{gray}}
\put(69,22){\colorbox{gray}}
\put(69,27){\colorbox{gray}}




\put(45,3){\colorbox{gray}}
\put(45,7){\colorbox{gray}}
\put(45,12){\colorbox{gray}}
\put(49,3){\colorbox{gray}}
\put(49,7){\colorbox{gray}}
\put(49,12){\colorbox{gray}}
\put(54,3){\colorbox{gray}}
\put(54,7){\colorbox{gray}}
\put(54,12){\colorbox{gray}}

\put(60,3){\colorbox{gray}}
\put(60,7){\colorbox{gray}}
\put(60,12){\colorbox{gray}}
\put(64,3){\colorbox{gray}}
\put(64,7){\colorbox{gray}}
\put(64,12){\colorbox{gray}}
\put(69,3){\colorbox{gray}}
\put(69,7){\colorbox{gray}}
\put(69,12){\colorbox{gray}}

\put(0,0){\framebox(15,15){\tiny $q_{15}$}}
\put(15,0){\framebox(15,15){\tiny $q_{14}$}}
\put(30,0){\framebox(15,15){\tiny $q_{13}$}}
\put(45,0){\framebox(15,15){\tiny $q_{12}$}}
\put(60,0){\framebox(15,15){\tiny $x_1$}}
\put(0,15){\framebox(15,15){\tiny $q_{25}$}}
\put(15,15){\framebox(15,15){\tiny $q_{24}$}}
\put(30,15){\framebox(15,15){\tiny $q_{23}$}}
\put(45,15){\framebox(15,15){\tiny $x_2$}}
\put(60,15){\framebox(15,15)}
\put(0,30){\framebox(15,15){\tiny $q_{35}$}}
\put(15,30){\framebox(15,15){\tiny $q_{34}$}}
\put(30,30){\framebox(15,15){\tiny $x_3$}}
\put(45,30){\framebox(15,15)}
\put(60,30){\framebox(15,15)}
\put(0,45){\framebox(15,15){\tiny $q_{45}$}}
\put(15,45){\framebox(15,15){\tiny $x_4$}}
\put(30,45){\framebox(15,15)}
\put(45,45){\framebox(15,15)}
\put(60,45){\framebox(15,15)}
\put(0,60){\framebox(15,15){\tiny $x_5$}}
\put(15,60){\framebox(15,15)}
\put(30,60){\framebox(15,15)}
\put(45,60){\framebox(15,15)}
\put(60,60){\framebox(15,15)}
\end{picture}
\end{center}
\vspace{-10pt}
\caption{The variables in $w_0 M_5 w_0$ and $\hh=(3,4,4,5,5)$.}
\label{picture:qrs}
\end{figure}
\end{remark}

\begin{remark}
We set $q_{rs}=0$ for $s-r >1$ and $q_{s \, s+1}=q_s$. 
Then $E_i^{(n)}$ is called the \emph{quantized elementary symmetric polynomial} in the polynomial ring $\C[x_1,\ldots,x_n,q_1,\ldots,q_{n-1}]$.
It is known that the quotient ring $Q_n$ in \eqref{eq:Qn} in this setting is isomorphic to the quantum cohomology ring of $\Fl(\C^n)$ by \cite{Font95, GK}.
It is the right hand side of \eqref{eq:Coordinate_ring_regular_nilpotent} for $\hh=(2,3,4,\ldots,n,n)$, so $\Gamma(\ZZ(\matN,\hh)_e, \mathcal{O}_{\ZZ(\matN,\hh)_e})$ when $\hh=(2,3,4,\ldots,n,n)$ is isomorphic to the quantum cohomology ring of the flag variety $\Fl(\C^n)$. 
This surprising connection was found by Peterson in an unpublished paper \cite{Pet}.
We refer the reader to \cite{Kos, Rie} for the result.
For this reason, the regular nilpotent Hessenberg variety $\Hess(\matN,\hh)$ for $\hh=(2,3,4,\ldots,n,n)$ is called the \emph{Peterson variety}.
\end{remark}

\section{The main theorem} \label{sect:the main theorem}

As in the case of regular nilpotent Hessenberg schemes, we consider the open subscheme $\ZZ(\matS,\hh)_e$ of the regular semisimple Hessenberg scheme $\ZZ(\matS,\hh)$ by restricting to the open set $U$. 
Namely, by setting 
\begin{align} \label{eq:xiij}
\xi_{i,j}(g) \coloneqq (g^{-1} \matS g)_{ij}
\end{align} 
for $g \in U$, one has
\begin{align*} 
\ZZ(\matS,\hh)_e = \Spec \C[z_{ij} \mid 1 \leq j < i \leq n]/(\xi_{i,j} \mid i > \hh(j)).  
\end{align*}
In other words, the set of global sections $\Gamma(\ZZ(\matS,\HessSp)_e, \mathcal{O}_{\ZZ(\matS,\HessSp)_e})$ is 
\begin{align} \label{eq:global_section_regular_semisimple} 
\Gamma(\ZZ(\matS,\hh)_e, \mathcal{O}_{\ZZ(\matN,\hh)_e}) \cong \C[z_{ij} \mid 1 \leq j < i \leq n]/(\xi_{i,j} \mid i > \hh(j)).
\end{align}
We now connect $\Gamma(\ZZ(\matS,\hh)_e, \mathcal{O}_{\ZZ(\matN,\hh)_e})$ with the quantization of $f_{i,j}$ defined in \eqref{eq:fij}.

For a positive integer $n$, we denote by $e_i^{(n)}$ the $i$-th elementary symmetric polynomial in the variables $x_1,\ldots,x_n$, i.e. 
\begin{align*}
e_i^{(n)} =e_i(x_1,\ldots,x_n) = \sum_{1 \leq p_1 < \dots < p_i \leq n} x_{p_1} \cdots x_{p_i}. 
\end{align*} 
Here, we take the convention that $e_0^{(n)}=1$ for all $n > 0$, and $e_i^{(n)}=0$ unless $0 \leq i \leq n$. 
Note that $E_i^{(n)}$ specializes to $e_i^{(n)}$ in the case $q_{rs}=0$ for all $1 \leq r < s \leq n$.
For $i_1,\ldots,i_m$ with $0 \leq i_k \leq k$, we set 
\begin{align*}
e_{i_1,\ldots,i_m} \coloneqq e_{i_1}^{(1)} \cdots e_{i_m}^{(m)}. 
\end{align*}
The polynomial $e_{i_1,\ldots,i_m}$ is called a \emph{standard elementary monomial}. 

\begin{proposition} $($\cite[Proposition~3.3]{FGP}$)$ \label{proposition:standard elementary monomials}
The standard elementary monomials form an additive basis of the polynomial ring $\Z[x_1,x_2,\ldots]$ in infinitely many variables.
\end{proposition}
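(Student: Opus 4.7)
\smallskip

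\noindent \textbf{Proof plan.} I would induct on the total degree $d = \sum_k i_k$, leveraging the fundamental recursion
\begin{align*}
e_k^{(M+1)} = e_k^{(M)} + x_{M+1}\, e_{k-1}^{(M)}
\end{align*}
as the chief technical tool. Rearranging gives $x_{M+1}\, e_{k-1}^{(M)} = e_k^{(M+1)} - e_k^{(M)}$, and iterating produces a \emph{straightening identity}
\begin{align*}
e_a^{(M)} e_b^{(M)} = e_{a+b}^{(M)} + \sum_{j=0}^{b-1}\bigl( e_{a+j}^{(M)} e_{b-j}^{(M+1)} - e_{a+j+1}^{(M+1)} e_{b-j-1}^{(M)} \bigr),
\end{align*}
which converts a product of two factors at the same level $M$ into a sum of products with one factor at level $M$ and one at level $M+1$, plus a single standard factor $e_{a+b}^{(M)}$.

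For the spanning part, I take any monomial $x_1^{a_1} \cdots x_N^{a_N}$ and first apply Artin's theorem to write it as $\sum_{b \in B_{N-1}} P_b(e_1^{(N)}, \ldots, e_N^{(N)})\cdot b$, where $B_{N-1} = \{e_{i_1,\ldots,i_{N-1}} : 0 \leq i_k \leq k\}$ is a $\Z[e_1^{(N)},\ldots,e_N^{(N)}]$-basis of $\Z[x_1,\ldots,x_N]$. Each monomial $\prod_k (e_k^{(N)})^{a_k}$ arising in $P_b$ is then reduced to standard form by repeatedly applying the straightening identity, which eliminates repeated level-$N$ factors by pushing the excess up to levels $N+1, N+2, \ldots$. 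Termination is controlled by a degree or lex argument on the multi-set of levels.

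For linear independence, suppose $\sum_{\mathbf{i}} c_\mathbf{i}\, e_\mathbf{i} = 0$ and let $M^*$ be the largest level with $i_{M^*} > 0$ among nonzero terms. The expansion $e_{i_{M^*}}^{(M^*)} = e_{i_{M^*}}^{(M^*-1)} + x_{M^*}\, e_{i_{M^*}-1}^{(M^*-1)}$ allows me to extract the coefficient of $x_{M^*}^1$, yielding a degree-reduced identity
\begin{align*}
\sum_{\mathbf{i}\,:\, i_{M^*} > 0} c_\mathbf{i}\, e_{i_1,\ldots,i_{M^*-1}} \cdot e_{i_{M^*}-1}^{(M^*-1)} = 0.
\end{align*}
Straightening rewrites each non-standard product (two factors at level $M^*-1$) as a $\Z$-linear combination of genuine standard monomials. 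By the induction hypothesis on degree, each coefficient of a standard monomial in the re-expansion vanishes, and I would argue that the resulting linear system on the $c_\mathbf{i}$'s has a triangular (hence invertible) structure with respect to a suitable lex order on tuples, forcing $c_\mathbf{i} = 0$ for every $\mathbf{i}$ with $i_{M^*} > 0$. The surviving relation has no level-$M^*$ terms, and the argument is repeated for smaller $M^*$.

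The main obstacle is precisely this last triangularity check: the straightening-induced change-of-basis matrix must be shown to be upper triangular with unit diagonal (or otherwise invertible over $\Z$) with respect to some ordering on multi-indices, so that the individual $c_\mathbf{i}$'s can be cleanly isolated. Without this combinatorial input the inductive step in linear independence would stall, since both the spanning reduction and the coefficient extraction naturally introduce factors at higher levels, so no gain is made merely by lowering $M^*$.
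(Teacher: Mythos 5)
The paper gives no proof of this proposition — it is cited verbatim from \cite[Proposition~3.3]{FGP} — so there is no in-paper argument to compare against; I assess your sketch on its own. Your straightening identity is algebraically correct: substituting $e_k^{(M+1)}=e_k^{(M)}+x_{M+1}e_{k-1}^{(M)}$ into the sum makes it telescope to $e_a^{(M)}e_b^{(M)}-e_{a+b}^{(M)}$, as required. The overall strategy of reducing to $\Z[x_1,\ldots,x_N]$ and exploiting freeness over $\Lambda_N=\Z[e_1^{(N)},\ldots,e_N^{(N)}]$ with the elementary-monomial basis $B_{N-1}$ is the correct one, and it is essentially what \cite{FGP} do. But the sketch has two genuine gaps, one of which you flag yourself.

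The linear-independence half is much simpler than your route, and trying your route is precisely what creates the obstacle you cannot close. If $\sum_{\mathbf i}c_{\mathbf i}e_{\mathbf i}=0$ is a finite relation, choose $N$ strictly larger than every level occurring with a nonzero coefficient. Then every $e_{\mathbf i}$ appearing is literally an element of $B_{N-1}$, and $B_{N-1}$ is $\Z$-linearly independent: it is a $\Lambda_N$-module basis of $\Z[x_1,\ldots,x_N]$ and $\Lambda_N$ is itself a free $\Z$-module (a polynomial ring), so a $\Lambda_N$-basis is in particular a $\Z$-linearly independent set. All $c_{\mathbf i}$ vanish at once, with no coefficient extraction, no re-straightening, and no triangularity to verify. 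Extracting the $x_{M^*}$-coefficient and then re-expanding reintroduces non-standard products at level $M^*-1$ whose straightening pushes factors back up to levels $M^*,M^*+1,\ldots$, so nothing is gained, which is exactly the impasse you describe.

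The spanning half does need the straightening, but your termination claim is asserted rather than argued, and it is not obvious as stated: the identity replaces a repeated level-$M$ pair by either a single level-$M$ factor or a level-$M$/level-$(M+1)$ pair, so the number of factors need not drop, levels can rise, and repeats can reappear at higher levels when you go on to straighten other pairs. One does have a bound on the number of factors (total degree $d$ is fixed and each nontrivial $e_j^{(k)}$ contributes degree $\geq 1$, so at most $d$ factors), and one can combine this with a well-founded order on the sorted level sequences to force termination, but this bookkeeping has to be done; without it the spanning argument is incomplete. Finally, note that ``Artin's theorem'' classically gives the monomial basis $\{x_1^{\alpha_1}\cdots x_N^{\alpha_N}:0\leq\alpha_k\leq N-k\}$ for $\Z[x_1,\ldots,x_N]$ over $\Lambda_N$; that the standard elementary monomials $B_{N-1}$ also form such a basis is \cite[Lemma~3.2]{FGP}, a short but separate (unitriangular change-of-basis) argument that your proof is silently using and that sits at roughly the same depth as the proposition itself.
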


We define a \emph{quantum standard elementary monomial} by  
\begin{align*}
E_{i_1,\ldots,i_m} \coloneqq E_{i_1}^{(1)} \cdots E_{i_m}^{(m)}. 
\end{align*}
Let $f_{i,j}$ be the polynomial in \eqref{eq:fij} for $i \geq j \geq 1$. 
By Proposition~\ref{proposition:standard elementary monomials} we can uniquely write $f_{i,j}$ as a linear combination of standard elementary monomials 
\begin{align*}
f_{i,j} = \sum_{i_1,\ldots,i_m} c_{i_1,\ldots,i_m} e_{i_1,\ldots,i_m} \ \ \ (c_{i_1,\ldots,i_m} \in \Z).
\end{align*}
Then we define the \emph{quantization $F_{i,j}$ of} $f_{i,j}$ is defined by 
\begin{align} \label{eq:Fij}
F_{i,j} = \sum_{i_1,\ldots,i_m} c_{i_1,\ldots,i_m} E_{i_1,\ldots,i_m}.
\end{align}

\begin{remark}
The definition of $F_{i,j}$ is an analogue of the definition for Fomin--Gelfand--Postnikov's quantum Schubert polynomials in \cite{FGP}, as explained in Introduction.
\end{remark}

Since $f_{i,j}$ is homogeneous of degree $2(i-j+1)$, the quantization $F_{i,j}$ is also homogeneous of degree $2(i-j+1)$. 
Note that the quantization $F_{i,j}$ specializes to $f_{i,j}$ when the quantum parameters are set to $0$.
Our main theorem is as follows.

\begin{theorem} \label{theorem:main}
Let $\varphi$ be the isomorphism of graded $\C$-algebras in \eqref{eq:varphi}. 
Let $F_{i,j}$ be the quantization of $f_{i,j}$ defined in \eqref{eq:Fij} for $i \geq j \geq 1$.
Then the followings hold.
\begin{enumerate}
\item The polynomial $F_{i,j}$ belongs to $\C[x_1,\ldots,x_i, q_{rs} \mid 1 \leq r < s \leq i]$ for all $i \geq j \geq 1$. 
\item The ideal generated by $E_1^{(n)}, \ldots, E_n^{(n)}$ is equal to the ideal generated by $F_{n,1}, \ldots, F_{n,n}$ in the polynomial ring $\C[x_1,\ldots,x_n,q_{rs} \mid 1 \leq r < s \leq n]$. 
In particular, the quotient ring $Q_n$ in \eqref{eq:Qn} is written as 
\begin{align*}
Q_n = \C[x_1,\ldots,x_n,q_{rs} \mid 1 \leq r < s \leq n]/(F_{n,1}, \ldots, F_{n,n}).
\end{align*}
\item We obtain 
\begin{align*}
\varphi^{-1}(F_{i,j}) = (-1)^{i-j} \xi_{n+1-j, \, n-i}
\end{align*}
for any $1 \leq j \leq i \leq n-1$.
In particular, for arbitrary Hessenberg function $\hh: [n] \to [n]$, the isomorphism $\varphi$ induces an isomorphism of graded $\C$-algebras
\begin{align} \label{eq:Coordinate_ring_regular_semisimple}
\Gamma(\ZZ(\matS,\hh)_e, \mathcal{O}_{\ZZ(\matS,\hh)_e}) \cong \C[x_1,\ldots,x_n,q_{rs} \mid 1 \leq r < s \leq n]/(F_{i,j} \mid i \geq \hh^*(j)), 
\end{align}
where $\hh^*$ is the dual of $\hh$ defined in \eqref{eq:HessenbergFunctionDual}. 
\end{enumerate}
\end{theorem}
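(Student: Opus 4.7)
The plan is to address parts (1), (2), (3) in turn, reducing everything to a single polynomial identity that will constitute the main technical content, and making essential use of the determinant formulas for $f_{i,j}$ and $F_{i,j}$ to be developed in Section~\ref{sect:deterinant formula for fij}.

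For part (1), the claim does not follow directly from expanding $f_{i,j}$ in standard elementary monomials and quantizing, because the expansion of a polynomial in $\C[x_1,\ldots,x_i]$ into standard monomials generically uses $e_{i_1,\ldots,i_m}$ with $m > i$ (for example, $x_1^3 = e_{1,1,1} - e_{1,0,2} - e_{0,2,1} + e_{0,0,3}$). Instead, the plan is to derive a determinant formula for $F_{i,j}$ whose matrix has entries in $\C[x_1,\ldots,x_i, q_{rs} \mid 1 \leq r < s \leq i]$, from which the claim is immediate.

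For part (3), the main obstacle, the strategy is first to derive a recursion for $\xi_{i,j}$ directly from its definition. Expanding the matrix identity $\matS g = g\,(g^{-1}\matS g)$ for $g \in U$ and comparing $(i,j)$-entries with $i > j$, together with $(g^{-1}\matS g)_{jj} = j$, yields
\begin{align*}
\xi_{i,j} = (i-j)\, z_{ij} - \sum_{k=j+1}^{i-1} z_{ik} \, \xi_{k,j}.
\end{align*}
Applying $\varphi$ and substituting $\varphi(z_{ab}) = E_{a-b}^{(n-b)}$, the claim $\varphi(\xi_{i,j}) = (-1)^{i-j-1} F_{n-j,\, n+1-i}$ follows by induction on $i-j$, provided one establishes the polynomial identity
\begin{align} \label{eq:keyidentityproposal}
\sum_{r=q}^{p} (-1)^{p-r}\, E_{r-q}^{(r-1)}\, F_{p,r} = (p-q+1)\, E_{p-q+1}^{(p)} \qquad (1 \leq q \leq p),
\end{align}
with $p = n-j$ and $q = n+1-i$. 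I have checked \eqref{eq:keyidentityproposal} in small cases (the $p=2,q=1$ case unpacks to $-F_{2,1}+E_1^{(1)}F_{2,2}=2E_2^{(2)}$, which holds on the nose as a polynomial identity). The hard part is proving \eqref{eq:keyidentityproposal} in general; the proposed route is to combine the determinant formula for $F_{p,q}$ from Section~\ref{sect:deterinant formula for fij} with the closed form of $E_i^{(p)}$ in Lemma~\ref{lemma:Ein}, with the identity falling out after a Laplace expansion and systematic cancellation of terms.

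Once \eqref{eq:keyidentityproposal} is proved, part (2) follows by specializing to $p=n$: the resulting $n$ equations indexed by $q = 1, \ldots, n$ form a triangular system in $(F_{n,1}, \ldots, F_{n,n})$ with unit leading coefficients, so the two sets $\{E_1^{(n)}, \ldots, E_n^{(n)}\}$ and $\{F_{n,1}, \ldots, F_{n,n}\}$ generate the same ideal in $\C[x_1, \ldots, x_n, q_{rs} \mid 1 \leq r < s \leq n]$. The isomorphism \eqref{eq:Coordinate_ring_regular_semisimple} is then immediate from parts (2) and (3) combined with the isomorphism $\varphi$ of Theorem~\ref{theorem:HorShi}, together with the translation of $i > \hh(j)$ into $i' \geq \hh^*(j')$ via the duality \eqref{eq:HessenbergFunctionDual}.
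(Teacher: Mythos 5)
Your plans for parts (1) and (2) coincide with the paper's: both hinge on the determinant formula of Corollary~\ref{corollary:determinant_formula_for_Fij}, and your aside — that the expansion of a polynomial of $\C[x_1,\ldots,x_i]$ in standard elementary monomials can require $m>i$ — is precisely why that formula is the right tool, since it exhibits $f_{i,j}$ as a $\Z$-combination of standard monomials whose nontrivial factors $e_k^{(\ell)}$ all have $j\le\ell\le i$. Where you genuinely diverge is part (3). The paper computes $\xi_{n+1-j,\,n-i}$ as an explicit determinant in the $z_{ab}$'s (Lemma~\ref{lemma:Sn-i-1}, via the auxiliary diagonal matrix $\matS_{n-i-1}$), substitutes $\varphi(z_{ab})=E_{a-b}^{(n-b)}$ into the determinant of Corollary~\ref{corollary:determinant_formula_for_Fij}, and matches the two by transposing and conjugating by $w_0^{(i-j+1)}$. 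You instead derive the recursion $\xi_{i,j}=(i-j)z_{ij}-\sum_{k=j+1}^{i-1}z_{ik}\,\xi_{k,j}$ from $\matS g=g(g^{-1}\matS g)$, using the pleasant structural fact (not exploited in the paper) that $g^{-1}\matS g$ is lower triangular with diagonal $(1,\ldots,n)$, and then propagate the statement by induction on $i-j$. That is a sound and arguably more modular route, one that makes the entry of the dual $\hh^*$ somewhat more transparent, at the cost of not producing the explicit determinant for $\xi$ recorded in \eqref{proof_main_3_1}. However, you overestimate the remaining difficulty: after re-indexing ($p=i$, $q=i-k+1$, $r=i+1-\ell$), your identity \eqref{eq:keyidentityproposal} is word for word the relation \eqref{eq:relations between Eik and Fij} already established in the remark following Corollary~\ref{corollary:determinant_formula_for_Fij}, and it drops out of a single cofactor expansion of the $F_{i,j}$-determinant along its first column (the off-diagonal $1$'s make each minor a smaller $F$-determinant times a unitriangular block, paralleling the proof of Theorem~\ref{theorem:relations between eik and fij}). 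No Laplace expansion against Lemma~\ref{lemma:Ein} or ``systematic cancellation'' is needed; once you have the determinant formula you planned to derive anyway, the identity is immediate, and the triangular structure you use for part (2) is the same relation read mod $(E_1^{(n)},\ldots,E_{n-j}^{(n)})$.
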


We conclude the following consequences. 

\begin{corollary} \label{corollary:main1}
Let $\hh: [n] \to [n]$ be a Hessenberg function. 
Then there is an isomorphism of graded $\C$-algebras
\begin{align*} 
\Gamma(\ZZ(\matS,\hh)_e, \mathcal{O}_{\ZZ(\matS,\hh)_e})/(\nu_{i,j} \mid i>j) \cong H^*(\Hess(\matN,\hh)),
\end{align*}
where $\nu_{i,j}$ is defined in \eqref{eq:nuij}. 
\end{corollary}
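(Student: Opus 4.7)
The plan is to transport everything to the polynomial side via the isomorphism $\varphi$ of Theorem~\ref{theorem:HorShi} and Theorem~\ref{theorem:main}(3), then specialize the quantum parameters to zero, and finally recognize the result using \eqref{eq:cohomology_Hess(N,h)_2} and Lemma~\ref{lemma:dual_homeo}.

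First, by Theorem~\ref{theorem:main}(3), the map $\varphi$ descends to a graded isomorphism
\[
\Gamma(\ZZ(\matS,\hh)_e, \mathcal{O}_{\ZZ(\matS,\hh)_e}) \;\cong\; \C[x_1,\ldots,x_n, q_{rs} \mid 1 \leq r<s\leq n]/(F_{i,j} \mid i \geq \hh^*(j)).
\]
Under this isomorphism, Theorem~\ref{theorem:HorShi} tells us that the element $\nu_{i,j}$ (for $i>j$) is sent to $-q_{n+1-i,\,n+1-j}$. Since the map $(i,j) \mapsto (n+1-i,\,n+1-j)$ is a bijection between $\{(i,j) : 1 \leq j < i \leq n\}$ and $\{(r,s) : 1 \leq r < s \leq n\}$, quotienting by the ideal $(\nu_{i,j} \mid i>j)$ corresponds exactly to imposing $q_{rs}=0$ for all $1 \leq r < s \leq n$ on the right-hand side.

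After this specialization, the quotient collapses to
\[
\C[x_1,\ldots,x_n]/(F_{i,j}|_{q_{rs}=0} \mid i \geq \hh^*(j)) \;=\; \C[x_1,\ldots,x_n]/(f_{i,j} \mid i \geq \hh^*(j)),
\]
where the equality uses the fact, noted in Section~\ref{sect:the main theorem}, that $F_{i,j}$ specializes to $f_{i,j}$ when all quantum parameters are set to zero. Applying the presentation \eqref{eq:cohomology_Hess(N,h)_2} for the Hessenberg function $\hh^*$ and then Lemma~\ref{lemma:dual_homeo}, we get
\[
\C[x_1,\ldots,x_n]/(f_{i,j} \mid i \geq \hh^*(j)) \;\cong\; H^*(\Hess(\matN,\hh^*)) \;\cong\; H^*(\Hess(\matN,\hh)),
\]
which closes the chain and gives the desired isomorphism of graded $\C$-algebras.

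The only step that needs care is the identification of the image of $(\nu_{i,j} \mid i>j)$ with the ideal generated by all quantum parameters: the bijection of index sets and the sign must be tracked correctly through $\varphi$. Once that is set, the remainder is a straightforward diagram chase using the specialization property built into the definition of $F_{i,j}$ together with the known cohomology presentations and the duality homeomorphism.
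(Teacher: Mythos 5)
Your proof is correct and takes essentially the same route as the paper's: both apply Theorem~\ref{theorem:main}(3) together with the $\varphi^{-1}(q_{rs}) = -\nu_{n+1-r,\,n+1-s}$ identity from Theorem~\ref{theorem:HorShi} to reduce to $\C[x_1,\ldots,x_n]/(f_{i,j} \mid i \geq \hh^*(j))$, then invoke \eqref{eq:cohomology_Hess(N,h)_2} for $\hh^*$ and Lemma~\ref{lemma:dual_homeo}. You simply spell out the index bijection and sign bookkeeping that the paper leaves implicit.
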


\begin{proof}
It follows from Theorems~\ref{theorem:HorShi} and \ref{theorem:main} that 
\begin{align*}
\Gamma(\ZZ(\matS,\hh)_e, \mathcal{O}_{\ZZ(\matS,\hh)_e})/(\nu_{i,j} \mid i>j) \cong \C[x_1,\ldots,x_n]/(f_{i,j} \mid i \geq \hh^*(j)),
\end{align*} 
which is isomorphic to $H^*(\Hess(\matN,\hh^*))$ by \eqref{eq:cohomology_Hess(N,h)_2}.
The result follows from Lemma~\ref{lemma:dual_homeo}. 
\end{proof}

\begin{corollary}
Let $\hh: [n] \to [n]$ be a Hessenberg function. 
Then there is an isomorphism of graded $\C$-algebras
\begin{align*} 
\Gamma(\ZZ(\matS,\hh)_e, \mathcal{O}_{\ZZ(\matS,\hh)_e})/(\nu_{i,j} \mid i>j) \cong H^*(\Hess(\matS,\hh))^{S_n}. 
\end{align*}
\end{corollary}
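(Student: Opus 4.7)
The plan is to obtain this corollary as an immediate composition of two isomorphisms already available at this point in the paper, so essentially no new computation should be required; the result is a corollary of a corollary.

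First, I would invoke Corollary~\ref{corollary:main1}, which was just proved and furnishes an isomorphism of graded $\C$-algebras
\begin{align*}
\Gamma(\ZZ(\matS,\hh)_e, \mathcal{O}_{\ZZ(\matS,\hh)_e})/(\nu_{i,j} \mid i>j) \cong H^*(\Hess(\matN,\hh)).
\end{align*}
Second, I would appeal to Theorem~\ref{theorem:AHHM_NandS} of Abe--Harada--Horiguchi--Masuda, which provides an isomorphism of graded $\C$-algebras
\begin{align*}
H^*(\Hess(\matN,\hh)) \cong H^*(\Hess(\matS,\hh))^{S_n}.
\end{align*}
Composing these two isomorphisms (both of graded $\C$-algebras, so the composition is again a grading-preserving $\C$-algebra isomorphism) yields exactly the asserted identification.

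There is no real obstacle here: both ingredients have already been established, and the only thing to check is that the grading conventions are compatible. The grading on the left-hand side comes from $\deg x_i = 2$, $\deg z_{ij}=2(i-j)$, and $\deg q_{rs} = 2(s-r+1)$, which was arranged precisely so that $\varphi$ in Theorem~\ref{theorem:HorShi} and the quantization $F_{i,j}$ of $f_{i,j}$ are homogeneous. The cohomological grading on the right-hand side is the standard one, doubled to match $\deg x_i = 2$. These conventions already matched in Corollary~\ref{corollary:main1} and in Theorem~\ref{theorem:AHHM_NandS}, so the composition is automatically an isomorphism of graded $\C$-algebras. Hence the proof is a single line.
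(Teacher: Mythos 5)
Your proof is correct and is exactly the paper's own argument: compose the isomorphism of Corollary~\ref{corollary:main1} with Theorem~\ref{theorem:AHHM_NandS}. Nothing more is needed.
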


\begin{proof}
It follows from Thereom~\ref{theorem:AHHM_NandS} and Corollary~\ref{corollary:main1}. 
\end{proof}

\section{Determinant formula for $f_{i,j}$} \label{sect:deterinant formula for fij}

The aim of this section is to describe the expansion of $f_{i,j}$ in the basis of standard elementary monomials by using determinant.

Recall that the symmetric group $S_n$ acts on the polynomial ring $\Z[x_1,\ldots,x_n]$ by
\begin{align*}
\big(w(f) \big)(x_1,\ldots,x_n) = f(x_{w(1)},\ldots,x_{w(n)})
\end{align*}
for $w \in S_n$ and $f=f(x_1,\ldots,x_n) \in \Z[x_1,\ldots,x_n]$.
Let $s_i \in S_n$ be the transposition of $i$ and $i+1$ for each $i \in [n-1]$. 
Then the divided difference operator $\partial_i$ on $\Z[x_1,\ldots,x_n]$ is defined by
\begin{align*}
\partial_i(f) = \frac{f-s_i(f)}{x_i-x_{i+1}}
\end{align*}
for $f \in \Z[x_1,\ldots,x_n]$.
Since $f-s_i(f)$ is divisible by $x_i-x_{i+1}$, $\partial_i(f)$ is a polynomial. 
In particular, $\partial_i(f)=0$ if and only if $f$ is symmetric in $x_i$ and $x_{i+1}$.
If $f$ is homogeneous of degree $2r$, then $\partial_i(f)$ is homogeneous of degree $2(r-1)$.
The divided difference operator $\partial_i$ satisfies the following ``Leibniz formula''
\begin{align*}
\partial_i(f \cdot g) = \partial_i(f) \cdot g + s_i(f) \cdot \partial_i(g)
\end{align*}
for $f,g \in \Z[x_1,\ldots,x_n]$.
In particular, $\partial_i$ commutes with multiplication by any polynomial which is symmetric in $x_i$ and $x_{i+1}$.

\begin{lemma} \label{lemma:fij_divided_difference_operator} 
Let $f_{i,j}$ be the polynomial in \eqref{eq:fij}. 
For $i > j \geq 1$, we have
\begin{align*}
\partial_k (f_{i,j})=\begin{cases}
-f_{i-1,j} \ & {\rm if} \ k=i \\
f_{i,j+1} \ & {\rm if} \ k=j \\
0 \ & {\rm otherwise} \ \\
\end{cases}
\end{align*}
\end{lemma}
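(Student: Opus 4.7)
The plan is to compute $\partial_k(f_{i,j})$ in three regimes: when $k$ is far from the boundary indices $j$ and $i$ (symmetry forces vanishing), when $k=i$, and when $k=j$. All three reduce to direct manipulation of the explicit formula \eqref{eq:fij}.

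\textbf{Step 1: Vanishing cases.} I will show that for $k \notin \{j,i\}$ the polynomial $f_{i,j}$ is symmetric in $x_k$ and $x_{k+1}$, so $\partial_k(f_{i,j})=0$. Looking at $f_{i,j} = \sum_{m=1}^{j}\bigl(\prod_{\ell=j+1}^i(x_m - x_\ell)\bigr) x_m$, the variables $x_m$ with $1 \leq m \leq j$ appear as summation indices and the variables $x_\ell$ with $j+1 \leq \ell \leq i$ appear inside the products. Thus if $k \geq i+1$, neither $x_k$ nor $x_{k+1}$ appears; if $j+1 \leq k \leq i-1$, both $x_k$ and $x_{k+1}$ appear inside each product and their transposition merely swaps two factors; if $1 \leq k \leq j-1$, both $x_k$ and $x_{k+1}$ appear as outer indices and swapping them permutes the two corresponding summands.

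\textbf{Step 2: The case $k=i$.} Since $x_{i+1}$ does not appear in $f_{i,j}$, applying $s_i$ just replaces the factor $(x_m - x_i)$ (for $\ell=i$) by $(x_m - x_{i+1})$ in each product. Therefore
\begin{align*}
f_{i,j} - s_i(f_{i,j}) = \sum_{m=1}^{j}\Bigl(\prod_{\ell=j+1}^{i-1}(x_m - x_\ell)\Bigr)\bigl[(x_m-x_i)-(x_m-x_{i+1})\bigr]x_m = -(x_i - x_{i+1})\,f_{i-1,j},
\end{align*}
and dividing by $x_i - x_{i+1}$ gives $\partial_i(f_{i,j}) = -f_{i-1,j}$.

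\textbf{Step 3: The case $k=j$.} This is the most delicate step because $x_j$ and $x_{j+1}$ appear on opposite sides of the boundary between the outer sum and the inner product. I will split the sum into the terms $m \leq j-1$ and $m=j$, apply $s_j$ to each piece (the only affected factor in the product is $\ell=j+1$, which carries $(x_m - x_{j+1})$), and observe that $x_j - x_{j+1}$ factors out uniformly. Explicitly, for $m \leq j-1$ one obtains the contribution $(x_j-x_{j+1})\prod_{\ell=j+2}^i(x_m-x_\ell)\,x_m$, while the $m=j$ term together with its image under $s_j$ contributes $(x_j-x_{j+1})\bigl[\prod_{\ell=j+2}^i(x_j-x_\ell)x_j + \prod_{\ell=j+2}^i(x_{j+1}-x_\ell)x_{j+1}\bigr]$. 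Dividing by $x_j - x_{j+1}$ and recombining yields $\sum_{m=1}^{j+1}\prod_{\ell=j+2}^i(x_m-x_\ell)\,x_m = f_{i,j+1}$.

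The main obstacle is bookkeeping in Step 3: one must correctly track how $s_j$ affects the $m=j$ summand (which flips $x_j$ to $x_{j+1}$ in both the outer factor $x_m$ and the single factor $(x_m - x_{j+1})$ inside the product), and then see that after subtraction the common factor $(x_j - x_{j+1})$ emerges and the surviving terms telescope into the definition of $f_{i,j+1}$. No induction is required; the identities follow from pure algebraic manipulation of the explicit formula.
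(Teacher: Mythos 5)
Your proof is correct in all three cases. The computations check out: in Step 2 the telescoping of $(x_m-x_i)-(x_m-x_{i+1}) = -(x_i-x_{i+1})$ cleanly produces $-f_{i-1,j}$, and in Step 3 the careful separation of the $m=j$ summand (where both the outer factor $x_m$ and the $\ell=j+1$ factor are affected by $s_j$) correctly yields the extra summand $\prod_{\ell=j+2}^i(x_{j+1}-x_\ell)\,x_{j+1}$ that extends the range of $m$ to $j+1$, giving $f_{i,j+1}$. However, your route differs from the paper's: the paper handles only the vanishing case $k\notin\{j,i\}$ directly and \emph{cites} Proposition 2.1 of Horiguchi's earlier paper \cite{Hor18} for the two substantive cases $k=i$ and $k=j$. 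Your argument supplies a self-contained derivation of those two cases from the explicit formula \eqref{eq:fij}, at the cost of the bookkeeping in Step 3. The paper's version is shorter but leaves the reader to consult \cite{Hor18}; yours is longer but complete on its own. Both are sound, and your computations are essentially what one would expect to find behind the cited proposition.
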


\begin{proof}
The claims for the cases when $k=i$ and $k=j$ were proved in \cite[Proposition~2.1]{Hor18}.
If $k \neq i, j$, then it is straightforward to see that $s_k(f_{i,j})=f_{i,j}$ by \eqref{eq:fij}.
Hence, we obtain $\partial_k (f_{i,j})=0$ in this case.
\end{proof}

For $n>0$, the $i$-th complete symmetric polynomial $h_i^{(n)}$ in the variables $x_1,\ldots,x_n$ is given by
\begin{align*}
h_i^{(n)} = h_i(x_1,\ldots,x_n)= \sum_{1 \leq p_1 \leq \dots \leq p_i \leq n} x_{p_1} \cdots x_{p_i} 
\end{align*} 
with the convention that $h_0^{(n)}=1$ and $h_i^{(n)}=0$ if $i<0$. 
It is known a fundamental relation between the elementary symmetric polynomials and the complete symmetric polynomials as follows (e.g. \cite[Section~6.1]{Ful97}):
\begin{align} \label{eq:e_kh_k_relation}
\sum_{i=0}^m (-1)^i e_i^{(n)} h_{m-i}^{(n)} = 0 \ \ \ \textrm{for} \ m>0.
\end{align}
Our first goal is to describe $f_{i,j}$ in terms of elementary symmetric polynomials and complete symmetric polynomials.

\begin{lemma} \label{lemma:fi1}
For $i \geq 1$, one has
\begin{align*}
f_{i,1} = \sum_{k=0}^{i-1} (-1)^k (i-k) e_k^{(i)} x_1^{i-k}.
\end{align*}
\end{lemma}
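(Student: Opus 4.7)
The plan is to start from the explicit product formula for $f_{i,1}$ and then perform a change of basis from elementary symmetric polynomials in $x_2,\ldots,x_i$ to the desired $e_k^{(i)} = e_k(x_1,\ldots,x_i)$.

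First, I would specialize equation~\eqref{eq:fij} to $j=1$ to get $f_{i,1} = x_1 \prod_{\ell=2}^{i}(x_1-x_\ell)$ (alternatively, iterate the recursion $f_{i,1} = (x_1-x_i)\,f_{i-1,1}$ with base $f_{1,1}=x_1$). Next, I would expand the product as a polynomial in $x_1$ using the standard identity $\prod_{\ell=2}^{i}(t-x_\ell) = \sum_{k=0}^{i-1}(-1)^k e_k(x_2,\ldots,x_i)\,t^{i-1-k}$, and specialize $t=x_1$, which gives
\[
f_{i,1} = \sum_{k=0}^{i-1} (-1)^k e_k(x_2,\ldots,x_i)\, x_1^{i-k}.
\]

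Then I would eliminate $x_2,\ldots,x_i$ in favor of $e_*^{(i)}$ by iterating $e_k(x_1,\ldots,x_i) = e_k(x_2,\ldots,x_i) + x_1\, e_{k-1}(x_2,\ldots,x_i)$, which yields the closed-form expansion
\[
e_k(x_2,\ldots,x_i) = \sum_{j=0}^{k} (-1)^j x_1^{j}\, e_{k-j}^{(i)}.
\]
Substituting this into the previous display and interchanging the two summations, with the reindexing $m = k-j$, collapses the double sum to
\[
f_{i,1} = \sum_{m=0}^{i-1} (-1)^m e_m^{(i)}\, x_1^{i-m} \sum_{k=m}^{i-1} 1 = \sum_{m=0}^{i-1} (-1)^m (i-m)\, e_m^{(i)}\, x_1^{i-m},
\]
which is exactly the desired formula.

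The only real obstacle is the careful sign and index bookkeeping in the change-of-basis step — in particular, verifying the $\sum_{j=0}^{k}(-1)^j x_1^{j} e_{k-j}^{(i)}$ identity (easiest by induction on $k$, or by multiplying the generating functions $(1-x_1 t)\prod_{\ell=2}^i(1-x_\ell t) = \prod_{\ell=1}^i(1-x_\ell t)$ and extracting coefficients) and then seeing that the inner sum after interchange is simply the cardinality $|\{k : m \le k \le i-1\}| = i-m$. Everything else is a straightforward manipulation that avoids any use of the divided difference operators or complete symmetric polynomials introduced earlier in the section.
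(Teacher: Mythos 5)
Your proof is correct and follows essentially the same route as the paper's: begin with the product formula $f_{i,1}=x_1\prod_{\ell=2}^i(x_1-x_\ell)$, expand in powers of $x_1$ with coefficients $e_k(x_2,\ldots,x_i)$, convert to $e_k^{(i)}$ via the identity $e_k(x_2,\ldots,x_i)=\sum_{j=0}^k(-1)^j x_1^j e_{k-j}^{(i)}$ proved by induction, and collapse the resulting double sum by reindexing. The paper's argument is this exact computation, with the index names $\ell,p$ in place of your $k,j$.
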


\begin{proof}
If $i=1$, then the claim is clear. In what follows, we suppose that $i>1$. 
By \eqref{eq:fij} we have
\begin{align} \label{eq:lemma_proof_fi1}
f_{i,1} = (x_1-x_2)(x_1-x_3)\cdots(x_1-x_i)x_1 = \sum_{\ell=0}^{i-1} (-1)^\ell e_\ell(x_2,\ldots,x_i) x_1^{i-\ell}. 
\end{align}
We here show that 
\begin{align} \label{eq:lemma_proof_e_ell_2i}
e_\ell(x_2,\ldots,x_i) = \sum_{p=0}^\ell (-1)^p e_{\ell-p}^{(i)} x_1^p 
\end{align}
by indeuction on $\ell$.
The base case when $\ell=0$ is clear. 
Assume that $\ell > 0$ and \eqref{eq:lemma_proof_e_ell_2i} is true for $\ell-1$.
Then we have
\begin{align*}
e_\ell(x_2,\ldots,x_i) &= e_\ell(x_1,\ldots,x_i) - e_{\ell-1}(x_2,\ldots,x_i) x_1 \\
&= e_\ell^{(i)} - \sum_{p=0}^{\ell-1} (-1)^p e_{\ell-1-p}^{(i)} x_1^{p+1} \ \ \ (\textrm{by the inductive assumption}) \\
&=e_\ell^{(i)} - \sum_{p=1}^\ell (-1)^{p-1} e_{\ell-p}^{(i)} x_1^p =\sum_{p=0}^\ell (-1)^p e_{\ell-p}^{(i)} x_1^p. 
\end{align*}
We proved \eqref{eq:lemma_proof_e_ell_2i}.
By \eqref{eq:lemma_proof_fi1} and \eqref{eq:lemma_proof_e_ell_2i}, one has
\begin{align*} 
f_{i,1} = \sum_{\ell=0}^{i-1} \sum_{p=0}^\ell (-1)^{\ell+p} e_{\ell-p}^{(i)} x_1^{i-\ell+p} = \sum_{k=0}^{i-1} (-1)^k (i-k) e_k^{(i)} x_1^{i-k},
\end{align*}
as desired.
\end{proof}

\begin{lemma} \label{lemma:divided_difference_complete_symmetric}
For $i \geq 1$ and $k \geq 1$, we have
\begin{align*}
\partial_i(h_k^{(i)}) = h_{k-1}^{(i+1)}.
\end{align*}
\end{lemma}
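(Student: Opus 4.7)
The plan is to apply the divided difference operator directly to a convenient expansion of $h_k^{(i)}$ and then recognize the result as $h_{k-1}^{(i+1)}$ via a standard splitting identity.

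First, I would use the elementary recurrence
\begin{align*}
h_k^{(i)} = h_k(x_1,\ldots,x_i) = \sum_{a=0}^{k} x_i^{\,a} \, h_{k-a}(x_1,\ldots,x_{i-1}),
\end{align*}
which follows from sorting the monomials in $h_k^{(i)}$ by the power of $x_i$. The key point is that each coefficient $h_{k-a}(x_1,\ldots,x_{i-1})$ is symmetric in $x_i$ and $x_{i+1}$ (indeed, it involves neither variable), so by the property noted just before the lemma, $\partial_i$ commutes with multiplication by it.

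Second, I would apply $\partial_i$ termwise. Using $\partial_i(x_i^{\,a}) = \frac{x_i^{\,a}-x_{i+1}^{\,a}}{x_i - x_{i+1}} = h_{a-1}(x_i, x_{i+1})$ (with the convention $h_{-1}=0$), this yields
\begin{align*}
\partial_i\!\left(h_k^{(i)}\right) = \sum_{a=1}^{k} h_{k-a}(x_1,\ldots,x_{i-1}) \, h_{a-1}(x_i, x_{i+1}).
\end{align*}

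Third, I would invoke the multiplicative splitting of complete symmetric polynomials: for any disjoint union of variable sets, $h_m$ expands as a convolution. Concretely,
\begin{align*}
h_{k-1}(x_1,\ldots,x_{i+1}) = \sum_{b=0}^{k-1} h_{b}(x_1,\ldots,x_{i-1}) \, h_{k-1-b}(x_i, x_{i+1}),
\end{align*}
which comes from the generating function factorization $\prod_{j=1}^{i+1}(1-x_j t)^{-1} = \prod_{j=1}^{i-1}(1-x_j t)^{-1} \cdot \prod_{j=i}^{i+1}(1-x_j t)^{-1}$. Reindexing $b = k-a$ matches this with the previous display, giving $\partial_i(h_k^{(i)}) = h_{k-1}^{(i+1)}$.

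No real obstacle is anticipated; the only thing to be careful about is the boundary conventions when $a=0$ (which contributes $0$ via $h_{-1}=0$) and when $i=1$ (where the empty-variable complete symmetric polynomials $h_0(\emptyset)=1$, $h_{>0}(\emptyset)=0$ still make the splitting valid). An alternative route, induction on $k$ using the Leibniz rule and the recurrence $h_k^{(i)} = h_k^{(i-1)} + x_i h_{k-1}^{(i)}$, would also work, but the direct expansion above is cleaner.
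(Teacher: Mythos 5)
Your proof is correct and takes essentially the same route as the paper: expand $h_k^{(i)}$ by powers of $x_i$, apply $\partial_i$ termwise using that the coefficients are symmetric in $x_i,x_{i+1}$ together with $\partial_i(x_i^\ell)=h_{\ell-1}(x_i,x_{i+1})$, and reassemble via the convolution/generating-function identity for complete symmetric polynomials. The paper states that splitting identity once (as its displayed relation) and uses it twice, exactly as you do, so the two arguments coincide.
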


\begin{proof}
In general, it is known that 
\begin{align} \label{eq:h_k_relation}
h_k(x_1,\ldots,x_n) = \sum_{\ell=0}^{k} h_{k-\ell}(x_1,\ldots,x_m) \cdot h_\ell(x_{m+1},\ldots,x_n)
\end{align}
for any $m \in [n-1]$.
In fact, it follows from the following equality
\begin{align*}
\sum_{k \geq 0} h_k(x_1,\ldots,x_n) t^k &= \prod_{p=1}^{n} \frac{1}{1-x_pt} =\left( \prod_{p=1}^{m} \frac{1}{1-x_pt} \right) \cdot \left( \prod_{p=m+1}^{n} \frac{1}{1-x_pt} \right) \\
&= \left( \sum_{k_1 \geq 0} h_{k_1}(x_1,\ldots,x_m) t^{k_1} \right) \cdot \left( \sum_{k_2 \geq 0} h_{k_2}(x_{m+1},\ldots,x_n) t^{k_2} \right).
\end{align*}
In particular, since $h_k^{(i)} = \sum_{\ell=0}^{k} h_{k-\ell}(x_1,\ldots,x_{i-1}) \cdot x_i^\ell$ and $h_{k-\ell}(x_1,\ldots,x_{i-1})$ is symmetric in $x_i$ and $x_{i+1}$, we have
\begin{align*}
\partial_i(h_k^{(i)}) &= \sum_{\ell=0}^{k} h_{k-\ell}(x_1,\ldots,x_{i-1}) \cdot \partial_i(x_i^\ell) = \sum_{\ell=1}^{k} h_{k-\ell}(x_1,\ldots,x_{i-1}) \cdot h_{\ell-1}(x_i,x_{i+1}) \\
&= \sum_{\ell=0}^{k-1} h_{k-1-\ell}(x_1,\ldots,x_{i-1}) \cdot h_{\ell}(x_i,x_{i+1}) = h_{k-1}^{(i+1)},
\end{align*}
where we used \eqref{eq:h_k_relation} again for the last equality. 
\end{proof}

\begin{proposition} \label{proposition:fijekhk}
For any $i \geq j \geq 1$, it holds that
\begin{align*}
f_{i,j} = \sum_{k=0}^{i-j+1} (-1)^k (i-k) e_k^{(i)} h_{i-j+1-k}^{(j)}.
\end{align*}
\end{proposition}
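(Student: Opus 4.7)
The plan is to proceed by induction on $j$, with $i$ fixed, using the divided difference machinery developed just above. The base case $j=1$ is essentially Lemma~\ref{lemma:fi1}, and the inductive step lifts the formula from $j$ to $j+1$ by applying $\partial_j$.

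For the base case, Lemma~\ref{lemma:fi1} gives
\begin{align*}
f_{i,1} = \sum_{k=0}^{i-1} (-1)^k (i-k) e_k^{(i)} x_1^{i-k}.
\end{align*}
Since $h_{i-k}^{(1)} = x_1^{i-k}$, and the would-be $k=i$ summand in $\sum_{k=0}^{i}(-1)^k(i-k)e_k^{(i)}h_{i-k}^{(1)}$ vanishes by virtue of the factor $(i-k)$, this is precisely the claimed formula with $j=1$.

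For the inductive step, fix $1 \leq j < i$ and assume the formula holds for $f_{i,j}$. Apply the divided difference operator $\partial_j$ to both sides. By Lemma~\ref{lemma:fij_divided_difference_operator}, the left-hand side becomes $f_{i,j+1}$. On the right-hand side, since $1 \leq j < j+1 \leq i$, the elementary symmetric polynomial $e_k^{(i)}$ is symmetric in $x_j$ and $x_{j+1}$, so by the Leibniz rule $\partial_j$ commutes with multiplication by $e_k^{(i)}$ and acts only on $h_{i-j+1-k}^{(j)}$. Lemma~\ref{lemma:divided_difference_complete_symmetric} then gives $\partial_j(h_{i-j+1-k}^{(j)}) = h_{i-j-k}^{(j+1)}$, and after observing that the $k=i-j+1$ summand vanishes (since $h_{-1}^{(j+1)}=0$), the right-hand side becomes
\begin{align*}
\sum_{k=0}^{i-(j+1)+1} (-1)^k (i-k) e_k^{(i)} h_{i-(j+1)+1-k}^{(j+1)},
\end{align*}
which is exactly the formula for $f_{i,j+1}$. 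Iterating from $j=1$ up to $j=i$ (note the induction needs $j<i$ in order to invoke the $k=j$ case of Lemma~\ref{lemma:fij_divided_difference_operator}, which yields all $j$-values in the required range) completes the proof.

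There is no substantial obstacle: the two preparatory lemmas on divided difference operators do all the heavy lifting. The only care points are verifying the symmetry of $e_k^{(i)}$ in $x_j,x_{j+1}$ (which is why the hypothesis $j<i$ is essential) and handling the boundary index $k=i-j+1$ in the reindexing.
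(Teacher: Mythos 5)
Your proof is correct and follows essentially the same route as the paper's: both argue by induction on $j$, anchor the base case at $j=1$ via Lemma~\ref{lemma:fi1}, and then advance the index by one by applying $\partial_j$ and invoking Lemma~\ref{lemma:fij_divided_difference_operator} together with Lemma~\ref{lemma:divided_difference_complete_symmetric}, using that $e_k^{(i)}$ is symmetric in $x_j,x_{j+1}$ so that $\partial_j$ passes through it. You have simply phrased the step as $j\to j+1$ rather than $j-1\to j$, and you made explicit the vanishing of the top boundary term that the paper handles silently.
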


\begin{proof}
We prove the claim by induction on $j$. 
The base case is $j = 1$. What we want to show is 
\begin{align*}
f_{i,1} = \sum_{k=0}^{i} (-1)^k (i-k) e_k^{(i)} x_1^{i-k} = \sum_{k=0}^{i-1} (-1)^k (i-k) e_k^{(i)} x_1^{i-k}, 
\end{align*} 
which is proved by Lemma~\ref{lemma:fi1}. 
We now assume $j>1$ and the claim is known for $j-1$.
It then follows from Lemmas~\ref{lemma:fij_divided_difference_operator} and \ref{lemma:divided_difference_complete_symmetric} that
\begin{align*}
f_{i,j} &= \partial_{j-1} (f_{i,j-1}) = \partial_{j-1} \left(\sum_{k=0}^{i-j+2} (-1)^k (i-k) e_k^{(i)} h_{i-j+2-k}^{(j-1)} \right) \ \ \ (\textrm{by the inductive hypothesis}) \\
&=\sum_{k=0}^{i-j+2} (-1)^k (i-k) e_k^{(i)} \partial_{j-1}(h_{i-j+2-k}^{(j-1)}) =\sum_{k=0}^{i-j+1} (-1)^k (i-k) e_k^{(i)} h_{i-j+1-k}^{(j)}. 
\end{align*}
\end{proof}

\begin{proposition} \label{proposition:fijekhkmatrix}
Let $1 \leq m \leq n$.
The following equality holds:
\begin{align} \label{eq:fijekhkmatrix}
&\begin{pmatrix}
f_{m,m} & 1 & 0 & \cdots & 0 \\
f_{m+1,m} & f_{m+1,m+1} & 2 & \ddots & \vdots \\
f_{m+2,m} & f_{m+2,m+1} & \ddots & \ddots & 0 \\
\vdots & \vdots &  & \ddots & n-m \\
f_{n,m} & f_{n,m+1} & \cdots & \cdots & f_{n,n}
\end{pmatrix} \\
= 
&\left( (-1)^{i+j}e_{i-j}^{(m-1+i)} \right)_{i,j \in [n-m+1]}
\begin{pmatrix}
1 &  & &   \\
     & 2 & & \\
     &   & \ddots &  \\
     &  & & n-m+1 \\ 
\end{pmatrix}
\left( h_{i-j+1}^{(m-1+j)} \right)_{i,j \in [n-m+1]}. \notag
\end{align} 
\end{proposition}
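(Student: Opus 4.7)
The plan is to verify the asserted matrix identity entry-by-entry, organized by the position of an entry relative to the superdiagonal of the left-hand matrix. Denote the three factors on the right-hand side as $A = \bigl((-1)^{i+j} e_{i-j}^{(m-1+i)}\bigr)_{i,j}$, $D = \mathrm{diag}(1, 2, \ldots, n-m+1)$, and $H = \bigl(h_{i-j+1}^{(m-1+j)}\bigr)_{i,j}$. Since $e_r^{(\cdot)} = 0$ for $r < 0$, the matrix $A$ is lower triangular with unit diagonal; since $h_r^{(\cdot)} = 0$ for $r < 0$ and $h_0^{(\cdot)} = 1$, the matrix $H$ vanishes strictly below its first superdiagonal and has $1$'s on that superdiagonal. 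I would first dispose of the easy entries: if $j > i+1$, then $(ADH)_{ij} = \sum_k A_{ik}\, k\, H_{kj}$ requires simultaneously $k \leq i$ and $k \geq j-1$, which is impossible, so the entry is $0$; if $j = i+1$, only $k = i$ contributes, giving $A_{ii} \cdot i \cdot H_{i,i+1} = i$, matching the LHS.

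The crux is the lower-triangular case $i \geq j$, where the LHS entry is $f_{m-1+i,\, m-1+j}$. Applying Proposition~\ref{proposition:fijekhk} to this polynomial and reindexing via $k \mapsto (m-1+i) - k$ produces
\[
f_{m-1+i,\, m-1+j} = \sum_{k=j-1}^{i} (-1)^{i-k}(m-1+k)\, e_{i-k}^{(m-1+i)}\, h_{k-j+1}^{(m-1+j)},
\]
whereas a direct expansion gives
\[
(ADH)_{ij} = \sum_{k=j-1}^{i} (-1)^{i-k}\, k\, e_{i-k}^{(m-1+i)}\, h_{k-j+1}^{(m-1+j)}.
\]
Subtracting and setting $q = i - k$, their difference becomes $(m-1) \sum_{q=0}^{i-j+1} (-1)^q e_q^{(m-1+i)} h_{i-j+1-q}^{(m-1+j)}$. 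The remaining task is thus to show that this last sum vanishes.

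The main obstacle is recognizing the following two-variable-set extension of \eqref{eq:e_kh_k_relation}: for $a \geq b$ and $N > a - b$,
\[
\sum_{q=0}^{N}(-1)^q e_q^{(a)} h_{N-q}^{(b)} = 0.
\]
I would prove this by a short generating-function computation, observing that
\[
\Bigl(\textstyle\sum_{q \geq 0}(-1)^q e_q^{(a)} t^q\Bigr)\Bigl(\textstyle\sum_{\ell \geq 0} h_\ell^{(b)} t^\ell\Bigr) = \prod_{k=1}^{a}(1-x_k t) \cdot \prod_{k=1}^{b}\frac{1}{1-x_k t} = \prod_{k=b+1}^{a}(1-x_k t),
\]
whose right-hand side is a polynomial of degree $a-b$ in $t$, forcing all coefficients in degrees $N > a - b$ to vanish. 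Applying this with $a = m-1+i$, $b = m-1+j$, and $N = i-j+1 > i - j = a - b$ kills the correction term, so $f_{m-1+i,\,m-1+j} = (ADH)_{ij}$ for $i \geq j$, completing the entry-wise verification. Beyond this identity, the rest of the argument is just bookkeeping with the index shift between Proposition~\ref{proposition:fijekhk} and the desired matrix entry.
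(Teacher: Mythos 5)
Your proof is correct and follows essentially the same route as the paper's: both expand $(ADH)_{ij}$ entry-by-entry, invoke Proposition~\ref{proposition:fijekhk} with the shifted indices, and cancel the resulting $(m-1)$-multiple of $\sum_q (-1)^q e_q^{(m-1+i)} h_{i-j+1-q}^{(m-1+j)}$ via the generating-function observation that $\prod_{p=1}^{a}(1-x_pt)\cdot\prod_{p=1}^{b}(1-x_pt)^{-1}=\prod_{p=b+1}^{a}(1-x_pt)$ is a polynomial of degree $a-b$. (One minor slip: the reindexing that produces your displayed formula for $f_{m-1+i,\,m-1+j}$ is $k\mapsto i-k$, not $k\mapsto(m-1+i)-k$.)
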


\begin{proof}
The $(i,j)$-th entry of the right hand side of \eqref{eq:fijekhkmatrix} is given by
\begin{align} \label{eq:proof_proposition_fijekhkmatrix}
\sum_{k=1}^{n-m+1} (-1)^{i+k} k e_{i-k}^{(m-1+i)} h_{k-j+1}^{(m-1+j)}. 
\end{align}
Both $e_{i-k}^{(m-1+i)}$ and $h_{k-j+1}^{(m-1+j)}$ are non-zero if and only if $j-1 \leq k \leq i$, so \eqref{eq:proof_proposition_fijekhkmatrix} equals $0$ whenever $i < j-1$. 
For $i \geq j-1$, \eqref{eq:proof_proposition_fijekhkmatrix} is written as 
\begin{align*} 
\sum_{k=j-1}^{i} (-1)^{i+k} k e_{i-k}^{(m-1+i)} h_{k-j+1}^{(m-1+j)} &= \sum_{k=0}^{i-j+1} (-1)^{i+j-1+k} (k+j-1) e_{i-j+1-k}^{(m-1+i)} h_{k}^{(m-1+j)} \\
&= \sum_{k=0}^{i-j+1} (-1)^{k} (i-k) e_{k}^{(m-1+i)} h_{i-j+1-k}^{(m-1+j)}. 
\end{align*}
Hence, what we want to show is 
\begin{align} \label{eq:proof_proposition_fijekhkmatrix2}
\sum_{k=0}^{i-j+1} (-1)^{k} (i-k) e_{k}^{(m-1+i)} h_{i-j+1-k}^{(m-1+j)} = \begin{cases}
f_{m-1+i,m-1+j} \ &\textrm{if} \ i \geq j; \\
j-1 \ &\textrm{if} \ i = j-1. 
\end{cases} 
\end{align}
If $i = j-1$, then it is straightforward to see \eqref{eq:proof_proposition_fijekhkmatrix2}. 
If $i \geq j$, then it follows from Proposition~\ref{proposition:fijekhk} that
\begin{align*} 
f_{m-1+i,m-1+j} &= \sum_{k=0}^{i-j+1} (-1)^k (m-1+i-k) e_k^{(m-1+i)} h_{i-j+1-k}^{(m-1+j)} \\
&= \sum_{k=0}^{i-j+1} (-1)^k (i-k) e_k^{(m-1+i)} h_{i-j+1-k}^{(m-1+j)} + (m-1) \left( \sum_{k=0}^{i-j+1} (-1)^k e_k^{(m-1+i)} h_{i-j+1-k}^{(m-1+j)} \right).
\end{align*}
In order to prove \eqref{eq:proof_proposition_fijekhkmatrix2}, it suffices to show that 
\begin{align} \label{eq:proof_proposition_fijekhkmatrix3}
\sum_{k=0}^{i'-j'+1} (-1)^k e_k^{(i')} h_{i'-j'+1-k}^{(j')} = 0
\end{align}
for any $i' \geq j' > 0$. 
By comparing the coefficient of $t^{i'-j'+1}$ for the both sides of the following equality
\begin{align*}
\prod_{p=j'+1}^{i'} (1-tx_p) = \left( \prod_{p=1}^{i'} (1-tx_p) \right) \cdot \left( \prod_{p=1}^{j'} \frac{1}{1-tx_p} \right) = \sum_{\ell \geq 0} \left( \sum_{k = 0}^{\ell} (-1)^k e_k^{(i')} h_{\ell-k}^{(j')} \right) t^\ell,
\end{align*}
we obtain \eqref{eq:proof_proposition_fijekhkmatrix3}, and hence we proved \eqref{eq:proof_proposition_fijekhkmatrix2} as desired. 
\end{proof}

\begin{proposition} \label{proposition:determinanthij}
Let $1 \leq m \leq n$.
Then we have 
\begin{align*} 
\det \left( h_{i-j+1}^{(m-1+j)} \right)_{i,j \in [n-m+1]} = e_{n-m+1}^{(n)}. 
\end{align*} 
\end{proposition}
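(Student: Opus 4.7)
The plan is to prove the identity by factoring the matrix so that the computation reduces to the classical Jacobi--Trudi determinant. Write $H = \left(h_{i-j+1}^{(m-1+j)}\right)_{i,j \in [N]}$ with $N = n-m+1$, and introduce the ``full variable set'' analogue $K = \left(h_{i-j+1}^{(n)}\right)_{i,j \in [N]}$. I will construct a lower unitriangular matrix $L$ so that $H = KL$. Since $\det L = 1$, this gives $\det H = \det K$, and the latter equals $e_N^{(n)}$ by the classical Jacobi--Trudi formula $s_{(1^N)} = e_N$ in $n$ variables.

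To find $L$, I start from the generating-function factorization
\begin{align*}
\prod_{p=1}^{m-1+j} \frac{1}{1-x_p t} = \left( \prod_{p=1}^{n} \frac{1}{1-x_p t} \right) \cdot \prod_{p=m+j}^{n} (1 - x_p t),
\end{align*}
which upon extracting the coefficient of $t^{i-j+1}$ yields the transition rule
\begin{align*}
h_{i-j+1}^{(m-1+j)} = \sum_{r \geq 0} (-1)^r e_r(x_{m+j}, \ldots, x_n) \, h_{i-j+1-r}^{(n)}.
\end{align*}
Reindexing by $j' = j+r$ rewrites the right-hand side as $\sum_{j' \geq j} K_{i,j'} L_{j',j}$, where $L_{j',j} \coloneqq (-1)^{j'-j} e_{j'-j}(x_{m+j}, \ldots, x_n)$ for $j' \geq j$ and $L_{j',j} \coloneqq 0$ otherwise. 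By construction the diagonal of $L$ is $1$, so $L$ is lower unitriangular.

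The critical step is to confirm that this is a genuine $N \times N$ matrix identity, i.e., that the sum truncates at $j' = N$. The potentially problematic summand at $j' = N+1$ (arising only when $i = N$) involves $e_{N-j+1}(x_{m+j},\ldots,x_n)$, an elementary symmetric polynomial of degree $N-j+1$ in only $N-j$ variables, hence vanishes identically; the same reasoning disposes of all larger $j'$. Thus $H = KL$ holds as an equality of $N \times N$ matrices, so $\det H = \det K \cdot \det L = \det K$. Finally, the transpose $K^{T}$ has $(i,j)$-entry $h_{1-i+j}^{(n)}$, which is exactly the Jacobi--Trudi matrix for $s_{(1^N)} = e_N$ in $n$ variables, giving $\det K = e_N^{(n)}$. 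I expect the main obstacle to be identifying the factorization $H = KL$ and verifying the boundary cancellation that legitimizes it on a finite matrix; once these are in place, the conclusion is an immediate appeal to Jacobi--Trudi.
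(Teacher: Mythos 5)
Your proof is correct, and it takes a genuinely different and cleaner route than the paper's. The paper proves the identity by descending induction on $m$, expanding the determinant along its last row and then establishing the resulting combinatorial identity $\sum_{k=1}^{n-m+1}(-1)^{k-1}h_k^{(n-k+1)}e_{n-m+1-k}^{(n-k)}=e_{n-m+1}^{(n)}$ via a second, nested induction — a fairly computational argument. You instead exhibit the matrix $H$ as a column-operations transform $H = KL$ of the (transposed) Jacobi--Trudi matrix $K$ for $e_N^{(n)}$, with $L$ lower unitriangular, so $\det H = \det K = e_N^{(n)}$ follows at once. The factorization is correct: the generating-function identity $\prod_{p=1}^{m-1+j}(1-x_pt)^{-1} = \bigl(\prod_{p=1}^{n}(1-x_pt)^{-1}\bigr)\prod_{p=m+j}^{n}(1-x_pt)$ gives exactly the claimed expansion, and the truncation to a finite $N\times N$ identity is legitimate because $e_{j'-j}(x_{m+j},\ldots,x_n)$ is an elementary symmetric polynomial of degree $j'-j$ in $N-j$ variables, hence vanishes whenever $j'>N$ (this handles all out-of-range $j'$ uniformly, not just $j'=N+1$). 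Your approach is shorter, conceptually transparent, and directly explains \emph{why} the determinant equals $e_N^{(n)}$: the column operations $L$ convert the variable-truncated complete homogeneous symmetric polynomials back into the full-variable ones, and the result is then a standard Jacobi--Trudi evaluation. The paper's inductive proof, by contrast, is self-contained and does not invoke Jacobi--Trudi, which may be preferable in a text that wants to avoid that citation, but your argument is the one that illuminates the structure.
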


\begin{proof}
We prove this by descending induction on $m$. In other words, we induct on the order $n-m+1$ of the square matrix $\big( h_{i-j+1}^{(m-1+j)} \big)_{i,j \in [n-m+1]}$. 
The base case when $m=n$ is clear since $h_1^{(n)} = e_1^{(n)}$. 
Assume that $m<n$ and the claim is known for arbitrary $m'$ with $m' > m$.
By using the cofactor expansion along the $(n-m+1)$-th row for $\det \left( h_{i-j+1}^{(m-1+j)} \right)_{i,j \in [n-m+1]}$, we have
\begin{align*}
\det \left( h_{i-j+1}^{(m-1+j)} \right)_{i,j \in [n-m+1]} = \sum_{k=1}^{n-m+1} (-1)^{k-1} h_k^{(n-k+1)} e_{n-m+1-k}^{(n-k)}
\end{align*}
where we used the descending induction hypothesis on $m$.
Hence, it is enough to prove that 
\begin{align} \label{eq:proposition_proof_determinanthij_1}
\sum_{k=1}^{n-m+1} (-1)^{k-1} h_k^{(n-k+1)} e_{n-m+1-k}^{(n-k)} = e_{n-m+1}^{(n)}.
\end{align}
In order to prove \eqref{eq:proposition_proof_determinanthij_1}, we show the following identity
\begin{align} \label{eq:proposition_proof_determinanthij_2}
\sum_{k=1}^{\ell} (-1)^{k-1} h_k^{(n-k+1)} e_{n-m+1-k}^{(n-k)} = \sum_{k=1}^{\ell}  e_{n-m+1-k}^{(n-\ell)} \big( e_k(x_{n-\ell+1},\ldots,x_n)+(-1)^{k-1}h_k^{(n-\ell)} \big)
\end{align}
for $1 \leq \ell \leq n-m$.
We prove \eqref{eq:proposition_proof_determinanthij_2} by induction on $\ell$. 
The base case is $\ell=1$. In this case it is clear that $h_1^{(n)}e_{n-m}^{(n-1)}=e_{n-m}^{(n-1)}\big( e_1(x_n) + h_1^{(n-1)} \big)$. 
Now assume that $\ell >1$ and \eqref{eq:proposition_proof_determinanthij_2} holds for $\ell-1$. 
Then the left hand side of \eqref{eq:proposition_proof_determinanthij_2} is 
\begin{align} \label{eq:proposition_proof_determinanthij_3}
&\sum_{k=1}^{\ell-1} (-1)^{k-1} h_k^{(n-k+1)} e_{n-m+1-k}^{(n-k)} + (-1)^{\ell-1}h_\ell^{(n-\ell+1)}e_{n-m+1-\ell}^{(n-\ell)} \\
=&\sum_{k=1}^{\ell-1}  e_{n-m+1-k}^{(n-\ell+1)} \big( e_k(x_{n-\ell+2},\ldots,x_n)+(-1)^{k-1}h_k^{(n-\ell+1)} \big) + (-1)^{\ell-1}h_\ell^{(n-\ell+1)}e_{n-m+1-\ell}^{(n-\ell)} \notag
\end{align}
by the inductive assumption on $\ell$. 
By using the relation $e_{n-m+1-k}^{(n-\ell+1)} = e_{n-m-k}^{(n-\ell)} \cdot x_{n-\ell+1} + e_{n-m+1-k}^{(n-\ell)}$,  the right hand side of \eqref{eq:proposition_proof_determinanthij_3} is written as 
\begin{align} \label{eq:proposition_proof_determinanthij_4}
&\sum_{k=1}^{\ell-2} e_{n-m-k}^{(n-\ell)} \cdot x_{n-\ell+1} \big( e_k(x_{n-\ell+2},\ldots,x_n)+(-1)^{k-1}h_k^{(n-\ell+1)} \big) \\
& \hspace{5pt} +e_{n-m+1-\ell}^{(n-\ell)} \big( x_{n-\ell+1}e_{\ell-1}(x_{n-\ell+2},\ldots,x_n)+(-1)^{\ell-1} (-x_{n-\ell+1}h_{\ell-1}^{(n-\ell+1)} + h_{\ell}^{(n-\ell+1)}) \big) \notag \\ 
& \hspace{5pt} +\sum_{k=1}^{\ell-1} e_{n-m+1-k}^{(n-\ell)} \big(e_k(x_{n-\ell+2},\ldots,x_n)+(-1)^{k-1} h_k^{(n-\ell+1)}) \big) \notag \\
=&\sum_{k=2}^{\ell-1} e_{n-m+1-k}^{(n-\ell)} \cdot x_{n-\ell+1} \big( e_{k-1}(x_{n-\ell+2},\ldots,x_n)+(-1)^k h_{k-1}^{(n-\ell+1)} \big) \notag \\
& \hspace{5pt} +e_{n-m+1-\ell}^{(n-\ell)} \big(e_{\ell}(x_{n-\ell+1},\ldots,x_n)+(-1)^{\ell-1} h_\ell^{(n-\ell)} \big) \notag \\ 
& \hspace{5pt} +\sum_{k=1}^{\ell-1} e_{n-m+1-k}^{(n-\ell)} \big(e_k(x_{n-\ell+2},\ldots,x_n)+(-1)^{k-1} h_k^{(n-\ell+1)}) \big) \notag
\end{align}
where we used $x_{n-\ell+1}e_{\ell-1}(x_{n-\ell+2},\ldots,x_n) = x_{n-\ell+1}x_{n-\ell+2} \cdots x_n = e_\ell(x_{n-\ell+1},\ldots,x_n)$ and $h_{\ell}^{(n-\ell+1)} = h_{\ell-1}^{(n-\ell+1)} \cdot x_{n-\ell+1} + h_\ell^{(n-\ell)}$ for the second summand above.  
Then the right hand side of \eqref{eq:proposition_proof_determinanthij_4} equals  
\begin{align*} 
&\sum_{k=2}^{\ell-1} e_{n-m+1-k}^{(n-\ell)} \big( x_{n-\ell+1} \cdot e_{k-1}(x_{n-\ell+2},\ldots,x_n) + e_k(x_{n-\ell+2},\ldots,x_n) \\
& \hspace{15pt} +(-1)^{k-1}(-x_{n-\ell+1} \cdot h_{k-1}^{(n-\ell+1)} + h_k^{(n-\ell+1)}) \big) \notag \\
& \hspace{15pt} +e_{n-m+1-\ell}^{(n-\ell)} \big(e_{\ell}(x_{n-\ell+1},\ldots,x_n)+(-1)^{\ell-1} h_\ell^{(n-\ell)} \big) +e_{n-m}^{(n-\ell)} \big( e_1(x_{n-\ell+2},\ldots,x_n) + h_1^{(n-\ell+1)} \big) \notag \\
=&\sum_{k=2}^{\ell-1} e_{n-m+1-k}^{(n-\ell)} \big(e_{k}(x_{n-\ell+1},\ldots,x_n) +(-1)^{k-1}h_k^{(n-\ell)}  \big) \notag \\
& \hspace{15pt} +e_{n-m+1-\ell}^{(n-\ell)} \big(e_{\ell}(x_{n-\ell+1},\ldots,x_n)+(-1)^{\ell-1} h_\ell^{(n-\ell)} \big) +e_{n-m}^{(n-\ell)} \big( e_1(x_{n-\ell+1},\ldots,x_n) + h_1^{(n-\ell)} \big) \notag 
\end{align*}
which is the right hand side of \eqref{eq:proposition_proof_determinanthij_2} as desired.
We proved \eqref{eq:proposition_proof_determinanthij_2}.

Applying \eqref{eq:proposition_proof_determinanthij_2} for $\ell=n-m$, the left hand side of \eqref{eq:proposition_proof_determinanthij_1} is equal to
\begin{align*} 
&\sum_{k=1}^{n-m}  e_{n-m+1-k}^{(m)} \big( e_k(x_{m+1},\ldots,x_n)+(-1)^{k-1}h_k^{(m)} \big) + (-1)^{n-m} h_{n-m+1}^{(m)} \\
=&\sum_{k=1}^{n-m}  e_{n-m+1-k}^{(m)}e_k(x_{m+1},\ldots,x_n)+\sum_{k=1}^{n-m+1} (-1)^{k-1}e_{n-m+1-k}^{(m)}h_k^{(m)} \\
=&\sum_{k=1}^{n-m+1}  e_{n-m+1-k}^{(m)}e_k(x_{m+1},\ldots,x_n)+\sum_{k=1}^{n-m+1} (-1)^{k-1}e_{n-m+1-k}^{(m)}h_k^{(m)} 
\end{align*}
where we used $e_{n-m+1}(x_{m+1},\ldots,x_n)=0$ for the last equality. 
Note that this is written as 
\begin{align*} 
\sum_{k=0}^{n-m+1}  e_{n-m+1-k}^{(m)}e_k(x_{m+1},\ldots,x_n)+\sum_{k=0}^{n-m+1} (-1)^{k-1}e_{n-m+1-k}^{(m)}h_k^{(m)} 
\end{align*}
because the first summand for $k=0$ and the last summand for $k=0$ are cancelled.
The first summand equals $e_{n-m+1}^{(n)}$ by a similar argument of \eqref{eq:h_k_relation} and the second summand is $0$ by \eqref{eq:e_kh_k_relation}.
Hence, we proved \eqref{eq:proposition_proof_determinanthij_1} as desired.
\end{proof}

\begin{theorem} [Determinant formula for $e_k^{(i)}$]
Let $1 \leq k \leq i$. 
Then we have
\begin{align} \label{eq:determinant_formula_for_eki} 
e_k^{(i)}=\frac{1}{k!} 
\left|
 \begin{array}{@{\,}ccccc@{\,}}
f_{i-k+1,i-k+1} & 1 & 0 & \cdots & 0 \\
f_{i-k+2,i-k+1} & f_{i-k+2,i-k+2} & 2 & \ddots & \vdots \\
\vdots & \vdots & \ddots & \ddots & 0 \\
f_{i-1,i-k+1} & f_{i-1,i-k+2} & \cdots & f_{i-1,i-1} & k-1 \\
f_{i,i-k+1} & f_{i,i-k+2} & \cdots & f_{i,i-1} & f_{i,i}
\end{array}
\right|.
\end{align} 
\end{theorem}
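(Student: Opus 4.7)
The proof plan is a direct application of Propositions~\ref{proposition:fijekhkmatrix} and \ref{proposition:determinanthij}. The matrix on the right-hand side of the claimed determinant formula is exactly the specialization of the left-hand matrix in \eqref{eq:fijekhkmatrix} for the choice $m=i-k+1$ and $n=i$, so that $n-m+1=k$. Indeed, with this substitution the $(p,q)$-entry of the matrix in Proposition~\ref{proposition:fijekhkmatrix} becomes $f_{i-k+p,\,i-k+q}$ for $p\geq q$, equals $p$ for $q=p+1$, and vanishes otherwise; this is precisely the matrix appearing in \eqref{eq:determinant_formula_for_eki}.

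I would then take determinants of both sides of \eqref{eq:fijekhkmatrix} under this specialization. The right-hand side factors as a product of three determinants, each of which is immediate:
\begin{itemize}
\item The matrix $\left((-1)^{p+q}e_{p-q}^{(i-k+p)}\right)_{p,q\in[k]}$ is lower-triangular because $e_{p-q}^{(\cdot)}=0$ whenever $p<q$, and its diagonal entries are $(-1)^{2p}e_0^{(i-k+p)}=1$, giving determinant $1$.
\item The middle diagonal matrix $\mathrm{diag}(1,2,\ldots,k)$ has determinant $k!$.
\item The third determinant $\det\left(h_{p-q+1}^{(i-k+q)}\right)_{p,q\in[k]}$ equals $e_k^{(i)}$ by Proposition~\ref{proposition:determinanthij} (apply it with the pair $(m,n)$ there set to $(i-k+1,i)$).
\end{itemize}
Multiplying these together gives that the determinant of the matrix of $f$'s equals $k!\cdot e_k^{(i)}$, and dividing by $k!$ yields \eqref{eq:determinant_formula_for_eki}.

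There is no genuine obstacle here: the substantive content has already been carried out in Propositions~\ref{proposition:fijekhk}--\ref{proposition:determinanthij}. The only mild bookkeeping point is verifying that the specialized matrix from Proposition~\ref{proposition:fijekhkmatrix} really agrees entry-by-entry with the matrix in the statement (including the superdiagonal values $1,2,\ldots,k-1$ and the placement of $f_{i,i}$ in the bottom-right corner), which is immediate from comparing shapes. Hence the determinant formula follows at once from the matrix factorization together with the Jacobi--Trudi-type evaluation of Proposition~\ref{proposition:determinanthij}.
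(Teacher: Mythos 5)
Your proposal is correct and follows exactly the paper's proof: specialize Proposition~\ref{proposition:fijekhkmatrix} to $m=i-k+1$, $n=i$, take determinants, and evaluate the three factors on the right-hand side using the lower-triangularity of the $e$-matrix, the diagonal matrix, and Proposition~\ref{proposition:determinanthij}. The paper's own proof is stated tersely and your write-up just makes the determinant factorization explicit.
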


\begin{proof}
Set $m=i-k+1$ and $n=i$ in  Proposition~\ref{proposition:fijekhkmatrix}. 
By taking the determinant of both sides of \eqref{eq:fijekhkmatrix}, the result follows from Proposition~\ref{proposition:determinanthij}. 
\end{proof}

\begin{theorem} \label{theorem:relations between eik and fij}
For $1 \leq k \leq i$, we have 
\begin{align} \label{eq:relations between eik and fij}
k e_k^{(i)} = \sum_{\ell=1}^k (-1)^{\ell-1} f_{i,i+1-\ell} e_{k-\ell}^{(i-\ell)}. 
\end{align}
\end{theorem}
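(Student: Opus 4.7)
The plan is to apply the determinant formula \eqref{eq:determinant_formula_for_eki} for $e_k^{(i)}$ and expand the determinant by cofactor expansion along the bottom row. Let $A$ denote the $k\times k$ matrix on the right-hand side of \eqref{eq:determinant_formula_for_eki}, so that $k!\,e_k^{(i)} = \det A$. The entries of $A$ are
$$A_{p,q} = \begin{cases} f_{i-k+p,\,i-k+q} & (q \leq p) \\ p & (q = p+1) \\ 0 & (q > p+1) \end{cases}$$
for $p, q \in [k]$. In particular $A_{k,j} = f_{i,\,i-k+j}$. Cofactor expansion along the $k$-th row gives
$$\det A = \sum_{j=1}^k (-1)^{k+j} f_{i,\,i-k+j}\, M_{k,j},$$
where $M_{k,j}$ denotes the $(k-1)\times(k-1)$ minor obtained by deleting row $k$ and column $j$. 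Substituting $\ell = k+1-j$ turns this into
$$\det A = \sum_{\ell=1}^k (-1)^{\ell-1} f_{i,\,i+1-\ell}\, M_{k,\,k+1-\ell},$$
so the theorem reduces to verifying the identity $M_{k,\,k+1-\ell} = (k-1)!\, e_{k-\ell}^{(i-\ell)}$.

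The key step is the block structure of $M_{k,\,k+1-\ell}$. I would partition the remaining rows $\{1,\dots,k-1\}$ as $\{1,\dots,k-\ell\}\cup\{k-\ell+1,\dots,k-1\}$ and the remaining columns $\{1,\dots,k\}\setminus\{k+1-\ell\}$ as $\{1,\dots,k-\ell\}\cup\{k+2-\ell,\dots,k\}$. For the resulting upper-right block (rows $\{1,\dots,k-\ell\}$, columns $\{k+2-\ell,\dots,k\}$), every $(p,q)$ satisfies $q\geq k+2-\ell > p+1$, whence $A_{p,q}=0$; this is the main technical check but it is immediate from the tridiagonal/Hessenberg shape of $A$. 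Consequently $M_{k,\,k+1-\ell}$ is block lower-triangular and equals the product of the determinants of the diagonal blocks.

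The upper-left block is the principal $(k-\ell)\times(k-\ell)$ submatrix of $A$, whose entries match those of the matrix in \eqref{eq:determinant_formula_for_eki} for $e_{k-\ell}^{(i-\ell)}$ (take $k\to k-\ell$ and $i\to i-\ell$); hence its determinant is $(k-\ell)!\, e_{k-\ell}^{(i-\ell)}$. For the lower-right block (rows $\{k-\ell+1,\dots,k-1\}$, columns $\{k+2-\ell,\dots,k\}$), the reindexing $(p',q')=(p-(k-\ell),\,q-(k+1-\ell))$ shows that the entries off the new diagonal $q'=p'$ vanish above and are given by $f_{i-k+p,i-k+q}$ below, so the block is lower-triangular with diagonal entries $p'+k-\ell$ for $p'=1,\dots,\ell-1$. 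Its determinant is therefore $(k-\ell+1)(k-\ell+2)\cdots(k-1) = (k-1)!/(k-\ell)!$. Multiplying the two block determinants yields $M_{k,\,k+1-\ell}=(k-1)!\,e_{k-\ell}^{(i-\ell)}$, and dividing $\det A = k!\,e_k^{(i)}$ by $(k-1)!$ gives \eqref{eq:relations between eik and fij}.

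The main obstacle, such as it is, lies in pinning down the block decomposition---verifying that the upper-right block is zero, and correctly recognising the upper-left block as the determinant matrix for $e_{k-\ell}^{(i-\ell)}$ after the appropriate index shift. Once this combinatorial bookkeeping is set up, the result follows from one cofactor expansion.
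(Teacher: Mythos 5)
Your proof is correct and follows essentially the same route as the paper: a cofactor expansion along the $k$-th row of the determinant in \eqref{eq:determinant_formula_for_eki}, with each minor $M_{k,k+1-\ell}$ identified as $(k-\ell)!\,e_{k-\ell}^{(i-\ell)}\cdot(k-\ell+1)\cdots(k-1)=(k-1)!\,e_{k-\ell}^{(i-\ell)}$. The only difference is that you spell out the block lower-triangular structure of the minor explicitly, whereas the paper states the factorisation without further justification.
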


\begin{proof}
By the cofactor expansion along the $k$-th row for the determinant appeared in the right hand side of \eqref{eq:determinant_formula_for_eki}, we have
\begin{align*}
k! e_k^{(i)} &= \sum_{\ell=1}^{k} (-1)^{\ell-1}f_{i,i+1-\ell} \cdot (k-\ell)!e_{k-\ell}^{(i-\ell)} \cdot (k-\ell+1)(k-\ell+2) \cdots (k-1) \\
&= (k-1)! \sum_{\ell=1}^{k} (-1)^{\ell-1}f_{i,i+1-\ell} e_{k-\ell}^{(i-\ell)}, 
\end{align*}
which proves the claim. 
\end{proof}

\begin{remark}
Let $p_k^{(n)} \coloneqq x_1^k+\cdots+x_n^k$ be the $k$-th power sum.
A fundamental relation between the elementary symmetric polynomials and the power sums is described as follows (e.g. \cite[Section~6.1]{Ful97}):
\begin{align} \label{eq:relations between eik and pki}
ke_k^{(i)} = \sum_{\ell=1}^{k} (-1)^{\ell-1} p_\ell^{(i)} e_{k-\ell}^{(i)}.
\end{align}
The relation \eqref{eq:relations between eik and fij} between \emph{truncated} elementary symmetric polynomials and polynomials $f_{i,j}$ is an analogue of \eqref{eq:relations between eik and pki}. 
\end{remark}

\begin{theorem} [Determinant formula for $f_{i,j}$] \label{theorem:determinant formula for fij}
For any $i \geq j \geq 1$, we have
\begin{align} \label{eq:determinant_formula_for_fij} 
f_{i,j}= 
\left|
 \begin{array}{@{\,}cccccc@{\,}}
e_1^{(j)} & 1 & 0 & 0 & \cdots & 0 \\
e_2^{(j+1)} & e_1^{(j+1)} & 1 & 0 & \ddots & \vdots \\
e_3^{(j+2)} & e_2^{(j+2)} & e_1^{(j+2)} & 1 & \ddots & \vdots \\
\vdots & \vdots & \vdots & \ddots & \ddots & 0 \\
e_{i-j}^{(i-1)} & e_{i-j-1}^{(i-1)} & e_{i-j-2}^{(i-1)} & \cdots & e_1^{(i-1)} & 1 \\
(i-j+1)e_{i-j+1}^{(i)} & (i-j)e_{i-j}^{(i)} & (i-j-1)e_{i-j-1}^{(i)} & \cdots & 2e_2^{(i)} & e_1^{(i)}
\end{array}
\right|.
\end{align} 
\end{theorem}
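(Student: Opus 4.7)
The plan is to prove the formula by reducing it, via multilinearity of the determinant, to Theorem~\ref{theorem:relations between eik and fij}. Write $m = i - j + 1$ and let $T$ denote the target $m \times m$ matrix in \eqref{eq:determinant_formula_for_fij}. The last row of $T$ has $c$-th entry $(m - c + 1) e_{m - c + 1}^{(i)}$; substituting the identity of Theorem~\ref{theorem:relations between eik and fij} (with $k = m - c + 1$),
$$
(m - c + 1)\, e_{m - c + 1}^{(i)} \;=\; \sum_{\ell = 1}^{m - c + 1} (-1)^{\ell - 1} f_{i, i + 1 - \ell}\, e_{m - c + 1 - \ell}^{(i - \ell)},
$$
and using multilinearity of the determinant in the last row gives
$$
\det T \;=\; \sum_{\ell = 1}^{m} (-1)^{\ell - 1} f_{i, i + 1 - \ell} \, \det T^{(\ell)},
$$
where $T^{(\ell)}$ agrees with $T$ on rows $1, \ldots, m - 1$ and has last row $T^{(\ell)}_{m, c} = e_{m - c + 1 - \ell}^{(i - \ell)}$ for $c \leq m - \ell + 1$ and $0$ otherwise.

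The key observation is that $\det T^{(\ell)} = 0$ for every $\ell \in \{1, \ldots, m - 1\}$. To see this, I would compare the last row of $T^{(\ell)}$ with row $m - \ell$ of $T$, which is unchanged in $T^{(\ell)}$ since only the last row is modified. Row $m - \ell$ of $T$ has entries $e_{m - \ell - c + 1}^{(j + m - \ell - 1)}$ for $c \leq m - \ell$, a $1$ in column $m - \ell + 1$, and $0$s thereafter. Using the identity $j + m - \ell - 1 = i - \ell$, this row matches the last row of $T^{(\ell)}$ exactly (same positions for the $1$ and the $0$s, same $e$-entries elsewhere), so $T^{(\ell)}$ has two equal rows.

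Only the $\ell = m$ term survives. Here $f_{i, i + 1 - m} = f_{i, j}$ and the last row of $T^{(m)}$ collapses to $(1, 0, \ldots, 0)$. Cofactor expansion along this row yields $\det T^{(m)} = (-1)^{m + 1} \det N$, where $N$ is the $(m-1) \times (m-1)$ minor obtained by deleting the last row and the first column of $T^{(m)}$. Since rows $1, \ldots, m - 1$ of $T$ are lower-Hessenberg with $1$'s on the superdiagonal, $N$ is lower triangular with unit diagonal and $\det N = 1$. Combining, $\det T = (-1)^{m - 1} f_{i, j} \cdot (-1)^{m + 1} = f_{i, j}$.

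The main obstacle I anticipate is the careful index bookkeeping in the vanishing step: one must verify that the nonzero support of the last row of $T^{(\ell)}$ (namely columns $c \leq m - \ell + 1$) agrees precisely with the support of row $m - \ell$ of $T$, and that the upper indices $i - \ell$ and $j + m - \ell - 1$ coincide. Everything else is a routine sign and multilinearity computation.
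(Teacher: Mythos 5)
Your proof is correct; I checked the index bookkeeping and it holds. Writing $m = i-j+1$, row $m-\ell$ of $T$ has $(m-\ell,c)$-entry $e_{m-\ell-c+1}^{(j+m-\ell-1)}$, and since $j+m-1 = i$ the upper index is indeed $i-\ell$, matching the last row of $T^{(\ell)}$, so $\det T^{(\ell)} = 0$ for $1 \le \ell \le m-1$. The surviving $\ell = m$ term gives $(-1)^{m-1} f_{i,j} \cdot (-1)^{m+1} \det N = f_{i,j}$ with $N$ unitriangular.

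Your route differs from the paper's. The paper proves the formula by \emph{descending induction on $j$}: it performs cofactor expansion along the \emph{first} column, recognizes (via the inductive hypothesis) that each complementary minor $M_k$ equals $f_{i,j+k}$ (after a block-triangular decomposition), obtains $\det T = (-1)^{i-j}(i-j+1)e_{i-j+1}^{(i)} + \sum_{\ell=1}^{i-j}(-1)^{\ell+i-j} f_{i,i+1-\ell}e_{i-j+1-\ell}^{(i-\ell)}$, and then identifies this expression with $f_{i,j}$ by Theorem~\ref{theorem:relations between eik and fij}. You instead apply Theorem~\ref{theorem:relations between eik and fij} \emph{directly} to decompose the last row by multilinearity, and observe that every term with $\ell < m$ reproduces row $m-\ell$, killing the determinant. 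Both proofs pivot on the same key identity of Theorem~\ref{theorem:relations between eik and fij}, but yours is non-inductive and conceptually transparent: it essentially says the relation expresses the last row of $T$ as a linear combination of the other rows plus $(-1)^{m-1}f_{i,j}$ times the first standard basis vector, so the determinant is immediate. This buys you a shorter argument and avoids having to identify the cofactor minors $M_k$ with $f_{i,j+k}$ (a block decomposition the paper glosses over), at no real cost. One minor point worth noting explicitly: when $j=1$ and $\ell = m = i$, the entry $e_0^{(j-1)} = e_0^{(0)}$ should be taken to be $1$; the paper's stated convention only covers $e_0^{(n)}$ for $n>0$, but the convention $e_0^{(0)}=1$ is implicit and also needed for Theorem~\ref{theorem:relations between eik and fij} at $k=i$.
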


\begin{proof}
We prove \eqref{eq:determinant_formula_for_fij} by descending induction on $j$. 
As the base case $j=i$, one has $f_{j,j}=x_1+\cdots+x_j=e_1^{(j)}$ as desired.
Assume that $j<i$ and \eqref{eq:determinant_formula_for_fij} is true for arbitrary $j'$ with $j' > j$.  
By the cofactor expansion along the first column, the right hand side of \eqref{eq:determinant_formula_for_fij} equals
\begin{align} \label{eq:proof_theorem_determinant_formula_for_fij_1} 
(-1)^{i-j}(i-j+1) e_{i-j+1}^{(i)} + \sum_{\ell=1}^{i-j} (-1)^{\ell+i-j} f_{i,i+1-\ell} e_{i-j+1-\ell}^{(i-\ell)}, 
\end{align}
where we used the descending induction hypothesis on $j$.
On the other hand, we set $k=i-j+1$ in Theorem~\ref{theorem:relations between eik and fij}. Then we obtain 
\begin{align*}
(i-j+1) e_{i-j+1}^{(i)} = \sum_{\ell=1}^{i-j+1} (-1)^{\ell-1} f_{i,i+1-\ell} e_{i-j+1-\ell}^{(i-\ell)} = (-1)^{i-j}f_{i,j} + \sum_{\ell=1}^{i-j} (-1)^{\ell-1} f_{i,i+1-\ell} e_{i-j+1-\ell}^{(i-\ell)}. 
\end{align*}
In other words, we have 
\begin{align} \label{eq:proof_theorem_determinant_formula_for_fij_2}
f_{i,j}= (-1)^{i-j}(i-j+1) e_{i-j+1}^{(i)} + \sum_{\ell=1}^{i-j} (-1)^{\ell+i-j} f_{i,i+1-\ell} e_{i-j+1-\ell}^{(i-\ell)}. 
\end{align}
By \eqref{eq:proof_theorem_determinant_formula_for_fij_1} and \eqref{eq:proof_theorem_determinant_formula_for_fij_2}, we proved \eqref{eq:determinant_formula_for_fij}.
\end{proof}

\begin{corollary} \label{corollary:determinant_formula_for_Fij}
For any $i \geq j \geq 1$, we obtain
\begin{align} \label{eq:determinant_formula_for_Fij} 
F_{i,j}= 
\left|
 \begin{array}{@{\,}cccccc@{\,}}
E_1^{(j)} & 1 & 0 & 0 & \cdots & 0 \\
E_2^{(j+1)} & E_1^{(j+1)} & 1 & 0 & \ddots & \vdots \\
E_3^{(j+2)} & E_2^{(j+2)} & E_1^{(j+2)} & 1 & \ddots & \vdots \\
\vdots & \vdots & \vdots & \ddots & \ddots & 0 \\
E_{i-j}^{(i-1)} & E_{i-j-1}^{(i-1)} & E_{i-j-2}^{(i-1)} & \cdots & E_1^{(i-1)} & 1 \\
(i-j+1)E_{i-j+1}^{(i)} & (i-j)E_{i-j}^{(i)} & (i-j-1)E_{i-j-1}^{(i)} & \cdots & 2E_2^{(i)} & E_1^{(i)}
\end{array}
\right|.
\end{align} 
\end{corollary}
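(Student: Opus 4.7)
The plan is to apply the quantization map, which sends each standard elementary monomial $e_{i_1,\ldots,i_m}$ to the quantum standard elementary monomial $E_{i_1,\ldots,i_m}$, term-by-term to the Leibniz expansion of the determinant formula \eqref{eq:determinant_formula_for_fij} for $f_{i,j}$ already established in Theorem~\ref{theorem:determinant formula for fij}. The left-hand side then becomes $F_{i,j}$ by the very definition \eqref{eq:Fij}, so everything hinges on showing that the quantization commutes with the Leibniz expansion of this particular determinant.

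First I would examine the structure of the matrix in \eqref{eq:determinant_formula_for_fij}: for $r < i-j+1$, the entry in row $r$ and column $c$ is $e_{r-c+1}^{(j+r-1)}$ if $c\leq r+1$ (with the convention $e_0^{(\ell)}=1$) and $0$ otherwise, while the bottom row ($r = i-j+1$) carries the same monomial multiplied by the integer $i-j+2-c$. The crucial observation is that row $r$ contributes only polynomials with the fixed superscript $j+r-1$; hence for any permutation $\sigma$ contributing non-trivially to the Leibniz expansion, the product $\prod_{r=1}^{i-j+1} e_{r-\sigma(r)+1}^{(j+r-1)}$ involves polynomials $e_{k_r}^{(j+r-1)}$ whose superscripts $j,j+1,\ldots,i$ are all distinct. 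Padding with the extra factors $e_0^{(\ell)}=1$ for $\ell\in\{1,\ldots,j-1\}$, this product is precisely a standard elementary monomial in the FGP sense.

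Next I would invoke linearity: the quantization is a $\Z$-linear map on the basis of standard elementary monomials (Proposition~\ref{proposition:standard elementary monomials}), so summing the contributions $\mathrm{sgn}(\sigma)\cdot(\text{integer factor from last row})\cdot\prod_r e_{r-\sigma(r)+1}^{(j+r-1)}$ over all relevant $\sigma$ gives the unique expansion of $f_{i,j}$ in the standard elementary monomial basis (with contributions from different permutations producing the same monomial simply adding up). Applying $\psi$ term-by-term replaces every factor $e_{k}^{(\ell)}$ by $E_{k}^{(\ell)}$ while preserving the signs $\mathrm{sgn}(\sigma)$ and the integer multipliers $(i-j+2-\sigma(i-j+1))$. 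Reassembling, the result is the Leibniz expansion of the determinant on the right-hand side of \eqref{eq:determinant_formula_for_Fij}, which is what we want.

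There is no real obstacle here; the proof is a formal consequence of Theorem~\ref{theorem:determinant formula for fij} together with the structural observation that each column of a given row carries the same superscript $j+r-1$. The most delicate point to articulate clearly is that this superscript-preserving row structure ensures every term in the Leibniz expansion already lies in the image of the standard elementary monomial basis (so no rewriting is needed before applying $\psi$), making the quantization interchange with the determinant.
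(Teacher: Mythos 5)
Your proof is correct and is essentially the same argument as the paper's (which is stated very tersely): the Leibniz expansion of the determinant in Theorem~\ref{theorem:determinant formula for fij} already expresses $f_{i,j}$ in the basis of standard elementary monomials because each row $r$ carries the fixed superscript $j+r-1$, so by the definition \eqref{eq:Fij} the quantization simply replaces each $e_k^{(\ell)}$ by $E_k^{(\ell)}$ term-by-term. You are more explicit than the paper about why the Leibniz terms are genuinely standard elementary monomials (distinct superscripts plus padding with $e_0^{(\ell)}=1$) and about linearity of the map; these are exactly the details the paper leaves implicit.
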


\begin{proof}
Theorem~\ref{theorem:determinant formula for fij} yields the expansion of $f_{i,j}$ in the basis of standard elementary monomials.
By the definition \eqref{eq:Fij}, we obtain the desired result \eqref{eq:determinant_formula_for_Fij}.
\end{proof}

\begin{remark}
By a similar discussion above, for $1 \leq k \leq i$, we have
\begin{align} 
k E_k^{(i)} &= \sum_{\ell=1}^k (-1)^{\ell-1} F_{i,i+1-\ell} E_{k-\ell}^{(i-\ell)}; \label{eq:relations between Eik and Fij} \\ 
E_k^{(i)}&=\frac{1}{k!} 
\left|
 \begin{array}{@{\,}ccccc@{\,}}
F_{i-k+1,i-k+1} & 1 & 0 & \cdots & 0 \\
F_{i-k+2,i-k+1} & F_{i-k+2,i-k+2} & 2 & \ddots & \vdots \\
\vdots & \vdots & \ddots & \ddots & 0 \\
F_{i-1,i-k+1} & F_{i-1,i-k+2} & \cdots & F_{i-1,i-1} & k-1 \\
F_{i,i-k+1} & F_{i,i-k+2} & \cdots & F_{i,i-1} & F_{i,i}
\end{array}
\right|. \label{eq:determinant_formula_for_Eki}
\end{align}
In fact, the cofactor expansion along the first column for the right hand side of \eqref{eq:determinant_formula_for_Fij} yields \eqref{eq:relations between Eik and Fij} where $j=i-k+1$. 
Also, the cofactor expansion along the $k$-th row for the right hand side of \eqref{eq:determinant_formula_for_Eki}, we obtain \eqref{eq:determinant_formula_for_Eki} by induction on $k$ and \eqref{eq:relations between Eik and Fij}. 
\end{remark}

\section{Proof of Theorem~\ref{theorem:main}} \label{sect:proof of main theorem}

In this section we prove our main theorem. 
The following lemma is useful to study the defining functions for $\ZZ(\matX,\hh)$. 

\begin{lemma} $($\cite[Lemma~2.3]{HorShi}$)$ \label{lemma:defining_functions} 
Let $\matX$ be an arbitrary element in $\gl_n(\C)$ and $i,j \in [n]$.
Then we have
\begin{align*}
(g^{-1} \matX g)_{ij} = \frac{\det (g_1 \, \ldots \, g_{i-1} \ \matX g_j \ g_{i+1} \, \ldots \, g_n)}{\det(g)}
\end{align*}
for $g \in \GL_n(\C)$ where $g_k$ is the $k$-th column vector of $g$ for each $k \in [n]$.
Note that the determinant in the numerator is obtained from $g$ by replacing the $i$-th column vector of $g$ to the $j$-th column vector of $\matX g$.
\end{lemma}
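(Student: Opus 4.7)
The plan is to reduce the claim to a single application of Cramer's rule, after a standard index manipulation that isolates the single column being acted on.

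First I would write the $(i,j)$-entry as
\begin{align*}
(g^{-1}\matX g)_{ij} = e_i^{\,T}\, g^{-1}\,(\matX g)\,e_j = e_i^{\,T}\, g^{-1}\,(\matX g_j),
\end{align*}
using that $(\matX g)e_j = \matX g_j$ is exactly the $j$-th column of $\matX g$. So the whole problem collapses to computing the $i$-th coordinate of the vector $g^{-1} v$ for a single vector $v = \matX g_j \in \C^n$, with no further reference to $\matX$ itself.

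Next, I would invoke Cramer's rule in the following form: for $v \in \C^n$ and $g \in \GL_n(\C)$, if $y = g^{-1}v$, then $y$ is the unique solution of the linear system $\sum_{k=1}^n y_k g_k = v$. Cramer's rule then gives
\begin{align*}
(g^{-1}v)_i = y_i = \frac{\det(g_1,\ldots,g_{i-1},\,v,\,g_{i+1},\ldots,g_n)}{\det(g_1,\ldots,g_n)} = \frac{\det(g_1,\ldots,g_{i-1},\,v,\,g_{i+1},\ldots,g_n)}{\det(g)}.
\end{align*}
Specializing $v = \matX g_j$ yields exactly the claimed formula.

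There is no real obstacle here: the only subtlety is recognizing that the right-hand side of the lemma is literally the Cramer's-rule expression for the $i$-th coordinate of $g^{-1}(\matX g_j)$, once one observes that $e_i^{\,T}g^{-1}\matX g e_j$ decouples as above. Both steps are elementary linear algebra and require no properties of the Hessenberg setting, so the proposal is essentially a one-line argument padded by a setup line and an invocation of Cramer's rule.
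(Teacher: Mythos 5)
Your proof is correct, and it is the natural argument: the paper itself does not reproduce a proof of this lemma (it simply cites \cite[Lemma~2.3]{HorShi}), so there is no in-paper argument to compare against. Your reduction of $(g^{-1}\matX g)_{ij}$ to the $i$-th coordinate of $g^{-1}(\matX g_j)$ followed by Cramer's rule for the system $\sum_k y_k g_k = \matX g_j$ is exactly the right one-line argument, and it is the same computation used in \cite{HorShi}.
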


In particular, it follows from Lemma~\ref{lemma:defining_functions} that $\xi_{i,j}$ in \eqref{eq:xiij} is described as  
\begin{align*}
\xi_{i,j}(g) = \det (g_1 \, \ldots \, g_{i-1} \ \matS g_j \ g_{i+1} \, \ldots \, g_n)
\end{align*}
for $g \in U$. 
We here recall that $g \in U$ in \eqref{eq:open set U} is of the form 
\begin{align} \label{eq:g_lower_unipotent}
g=\left(
 \begin{array}{@{\,}ccccc@{\,}}
     1 &  &  &  &  \\
     x_{21} & 1 &  &  &  \\ 
     x_{31} & x_{32} & 1 &  &  \\ 
     \vdots& \vdots & \ddots & \ddots & \\
     x_{n1} & x_{n2} & \cdots & x_{n \, n-1} & 1 
 \end{array}
 \right), 
\end{align}
and $\matS$ is given in \eqref{eq:regular nilpotent semisimple}. 

\begin{lemma} \label{lemma:Sn-i-1}
Let $1 \leq j \leq i \leq n-1$. 
For arbitrary complex numbers $c_1,\ldots,c_{n-i-1}$, we set the $n \times n$ diaganol matrix $\matS_{n-i-1}$ with (ordered) diagonal entries $c_1,\ldots,c_{n-i-1},0,1,\ldots,i$.
Then $(g^{-1}\matS_{n-i-1}g)_{n+1-j \ n-i}$ is equal to 
\begin{align*}
(-1)^{i-j} \left|
 \begin{array}{@{\,}ccccc@{\,}}
  x_{n-i+1\ n-i} &  1 & 0 & \cdots & 0 \\
  2x_{n-i+2\ n-i} & x_{n-i+2\ n-i+1} & 1 & \ddots & \vdots \\ 
  3x_{n-i+3\ n-i} &  x_{n-i+3\ n-i+1} & x_{n-i+3\ n-i+2} & \ddots & 0  \\ 
  \vdots &  \vdots& \vdots & \ddots & 1  \\
  (i-j+1)x_{n-j+1\ n-i} &  x_{n-j+1\ n-i+1} & x_{n-j+1\ n-i+2} & \cdots & x_{n-j+1\ n-j}  
 \end{array}
 \right|
\end{align*}
where $g$ is of the form in \eqref{eq:g_lower_unipotent}.
\end{lemma}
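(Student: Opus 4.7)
The plan is to apply Lemma~\ref{lemma:defining_functions} to rewrite the $(n+1-j,\,n-i)$ entry of $g^{-1}\matS_{n-i-1}g$ as the determinant of $g$ with its $(n+1-j)$-th column replaced by $\matS_{n-i-1}g_{n-i}$, and then to reduce this $n\times n$ determinant to the stated $(i-j+1)\times(i-j+1)$ determinant via block decomposition and a single cyclic column shift.

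First I would write down $\matS_{n-i-1}g_{n-i}$ explicitly. Since $g_{n-i}=(0,\ldots,0,1,x_{n-i+1\,,n-i},\ldots,x_{n,n-i})^T$ (with the $1$ in row $n-i$), and since the diagonal entries of $\matS_{n-i-1}$ in positions $1,\ldots,n-i$ are $c_1,\ldots,c_{n-i-1},0$, the first $n-i$ coordinates of $\matS_{n-i-1}g_{n-i}$ vanish. In row $n-i+\ell$ (for $\ell=1,\ldots,i$) the entry is $\ell\cdot x_{n-i+\ell\,,n-i}$. In particular the replaced column is independent of $c_1,\ldots,c_{n-i-1}$.

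Next I would exploit the block triangular structure of the resulting $n\times n$ matrix $M$. Split rows and columns at the thresholds $n-i$ and $n-j+1$. In the top block of rows (rows $1,\ldots,n-i$):
\begin{itemize}
\item the top-left $(n-i)\times(n-i)$ block of $M$ is the corresponding block of $g$, lower triangular with $1$'s on the diagonal;
\item every other entry in these top rows vanishes: for columns $n-i+1,\ldots,n-j$ because $g$ is lower triangular, for column $n-j+1$ by the calculation of Step~1, and for columns $n-j+2,\ldots,n$ again by lower triangularity of $g$.
\end{itemize}
A parallel analysis of rows $n-j+2,\ldots,n$ paired with columns $n-j+2,\ldots,n$ shows the corresponding block is lower triangular with $1$'s on the diagonal, while the middle-row/right-column block is $0$. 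Hence $\det M$ collapses to the determinant of the $(i-j+1)\times(i-j+1)$ middle-middle block $P$, whose columns indexed $1,\ldots,i-j$ come from the columns $n-i+1,\ldots,n-j$ of $g$ (and hence look like the $g$-entries appearing in columns $2,\ldots,i-j+1$ of the target matrix), and whose last column $i-j+1$ consists of the entries $\ell\cdot x_{n-i+\ell\,,n-i}$ for $\ell=1,\ldots,i-j+1$ (the entries appearing in column $1$ of the target determinant, multiplied by the prescribed integers).

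Finally I would compare $P$ to the matrix $D$ in the lemma. They agree up to moving the last column of $P$ to the first position; this is a cyclic shift through $i-j$ transpositions, contributing a sign $(-1)^{i-j}$. Therefore $(g^{-1}\matS_{n-i-1}g)_{n+1-j\,,n-i}=\det M=\det P=(-1)^{i-j}\det D$, which is the stated equality. The only step that requires genuine care is the block decomposition: one has to verify that the replaced column really has zeros in rows $1,\ldots,n-i$ (this is where the $0$ on the diagonal of $\matS_{n-i-1}$ at position $n-i$ is crucial, ensuring independence from $c_1,\ldots,c_{n-i-1}$), and then keep precise track of indices when passing from $M$ to $P$ and from $P$ to $D$.
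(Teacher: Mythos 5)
Your proposal is correct and follows essentially the same approach as the paper's proof: apply Lemma~\ref{lemma:defining_functions} (with $\det g=1$ since $g$ is unipotent), use the block lower-unitriangular structure to collapse the $n\times n$ determinant to the $(i-j+1)\times(i-j+1)$ middle block whose last column is $\matS_{n-i-1}g_{n-i}$, and move that column to the front by $i-j$ adjacent transpositions to produce the sign $(-1)^{i-j}$. You have merely spelled out the block reduction that the paper compresses into ``it is straightforward to see.''
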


\begin{proof}
By Lemma~\ref{lemma:defining_functions} we have 
\begin{align*}
(g^{-1}\matS_{n-i-1}g)_{n+1-j \ n-i} = \det (g_1 \, \ldots \, g_{n-j} \ \matS_{n-i-1} g_{n-i} \ g_{n-j+2} \, \ldots \, g_n).
\end{align*}
It is straightforward to see that $\det (g_1 \, \ldots \, g_{n-j} \ \matS_{n-i-1} g_{n-i} \ g_{n-j+2} \, \ldots \, g_n)$ is
\begin{align*}
\left|
 \begin{array}{@{\,}ccccc@{\,}}
   1 & 0 & \cdots & 0 & x_{n-i+1\ n-i} \\
  x_{n-i+2\ n-i+1} & 1 & \ddots & \vdots & 2x_{n-i+2\ n-i} \\ 
   x_{n-i+3\ n-i+1} & x_{n-i+3\ n-i+2} & \ddots & 0 & 3x_{n-i+3\ n-i} \\ 
   \vdots& \vdots & \ddots & 1 & \vdots \\
   x_{n-j+1\ n-i+1} & x_{n-j+1\ n-i+2} & \cdots & x_{n-j+1\ n-j} & (i-j+1)x_{n-j+1\ n-i}  
 \end{array}
 \right|. 
\end{align*}
We interchange adjacent columns such that the last column is interchanged with the column immediately to the left until it is exactly the first column. 
Then we have the desired result. 
\end{proof}

We now give a proof of Theorem~\ref{theorem:main}. 

\begin{proof}[Proof of Theorem~\ref{theorem:main}]
(1) It follows from Corollary~\ref{corollary:determinant_formula_for_Fij} that $F_{i,j}$ belongs to the polynomial ring $\C[x_1,\ldots,x_i, q_{rs} \mid 1 \leq r < s \leq i]$. 

(2) Set $i=n$ in Corollary~\ref{corollary:determinant_formula_for_Fij} and the cofactor expansion along the $(n-j+1)$-th row in \eqref{eq:determinant_formula_for_Fij} implies the following relations
\begin{align*}
F_{n,j} \equiv (-1)^{n-j} (n-j+1) E_{n-j+1}^{(n)} \pmod {E_1^{(n)}, \dots, E_{n-j}^{(n)}}
\end{align*}
for each $j \in [n]$.
This means that the ideal generated by $E_1^{(n)}, \ldots, E_{n-j+1}^{(n)}$ equals the ideal generated by $F_{n,j}, \ldots, F_{n,n}$ in the polynomial ring $\C[x_1,\ldots,x_n,q_{rs} \mid 1 \leq r < s \leq n]$ for all $j \in [n]$. 
The case when $j=1$ is the desired result. 

(3) We set $c_k = k-(n-i)$ for $k \in [n-i-1]$ in Lemma~\ref{lemma:Sn-i-1}. 
Then we have $\matS_{n-i-1} + (n-i) I_n =\matS$ where $I_n$ denotes the identity matrix of order $n$. 
Hence, one can see that 
\begin{align*}
(g^{-1} \matS_{n-i-1} g)_{k\ell} = (g^{-1} \matS g)_{k\ell}
\end{align*}
whenever $k \neq \ell$.
Combining this with Lemma~\ref{lemma:Sn-i-1}, we obtain 
\begin{align} \label{proof_main_3_1}
\xi_{n+1-j, \, n-i} = (-1)^{i-j} \left|
 \begin{array}{@{\,}ccccc@{\,}}
  z_{n-i+1\ n-i} &  1 & 0 & \cdots & 0 \\
  2z_{n-i+2\ n-i} & z_{n-i+2\ n-i+1} & 1 & \ddots & \vdots \\ 
  3z_{n-i+3\ n-i} &  z_{n-i+3\ n-i+1} & z_{n-i+3\ n-i+2} & \ddots & 0  \\ 
  \vdots &  \vdots& \vdots & \ddots & 1  \\
  (i-j+1)z_{n-j+1\ n-i} &  z_{n-j+1\ n-i+1} & z_{n-j+1\ n-i+2} & \cdots & z_{n-j+1\ n-j}  
 \end{array}
 \right|.
\end{align}
On the other hand, it follows from the definition of the isomorphism $\varphi$ in \eqref{eq:varphi} and Corollary~\ref{corollary:determinant_formula_for_Fij} that 
\begin{align} \label{proof_main_3_2}
\varphi^{-1}(F_{i,j}) = \left|
 \begin{array}{@{\,}ccccc@{\,}}
  z_{n-j+1\ n-j} &  1 & 0 & \cdots & 0 \\
  z_{n-j+1\ n-j-1} & z_{n-j\ n-j-1} & 1 & \ddots & \vdots \\ 
  \vdots & \vdots & \ddots & \ddots & 0  \\ 
  z_{n-j+1\ n-i+1} & z_{n-j\ n-i+1} & \cdots & z_{n-i+2\ n-i+1} & 1  \\
  (i-j+1)z_{n-j+1\ n-i} &  (i-j)z_{n-j\ n-i} & \cdots & 2z_{n-i+2\ n-i} & z_{n-i+1\ n-i}  
 \end{array}
 \right|.
\end{align}
Let $w_0^{(i-j+1)}$ be the permutation matrix associated with the longest element in $S_{i-j+1}$.
By taking the conjugation of the transpose matrix of the right hand side in \eqref{proof_main_3_2} by $w_0^{(i-j+1)}$,
we obtain 
\begin{align} \label{proof_main_3_3}
\varphi^{-1}(F_{i,j}) = (-1)^{i-j} \xi_{n+1-j, \, n-i} \ \ \ \textrm{for} \ 1 \leq j \leq i \leq n-1 
\end{align}
by \eqref{proof_main_3_1}. 

For the latter part, we take a Hessenberg function $\hh: [n] \to [n]$.  
It then follows from \eqref{proof_main_3_3} that the isomorphism $\varphi$ induces 
an isomorphism of graded $\C$-algebras
\begin{align} \label{proof_main_3_4}
&\Gamma(\ZZ(\matS,\hh)_e, \mathcal{O}_{\ZZ(\matS,\hh)_e}) \\
\cong &\C[x_1,\ldots,x_n,q_{rs} \mid 1 \leq r < s \leq n]/(F_{n,1},\ldots,F_{n,n})+(F_{n-j,n+1-i} \mid j \in [n-1], i > \hh(j)). \notag
\end{align}
By the definition of $\hh^*$ in \eqref{eq:HessenbergFunctionDual}, one can see that 
\begin{align*}
&\{(i,j) \in [n] \times [n] \mid j \in [n-1], i>\hh(j) \} \\
= &\{(i,j) \in [n] \times [n] \mid j \in [n-1], j \leq n-\hh^*(n+1-i) \}. 
\end{align*}
This implies that the ideal appeared in the right hand side of \eqref{proof_main_3_4} is written as 
\begin{align*}
&(F_{n,1},\ldots,F_{n,n})+(F_{n-j,n+1-i} \mid j \in [n-1], j \leq n-\hh^*(n+1-i)) \\
= &(F_{n,1},\ldots,F_{n,n})+(F_{i,j} \mid i \in [n-1], i \geq \hh^*(j)) \\ 
= &(F_{i,j} \mid i \geq \hh^*(j)). 
\end{align*}
Therefore, we conclude the isomorphism \eqref{eq:Coordinate_ring_regular_semisimple} as desired.
\end{proof}

\section{Quantized recursive formula for $F_{i,j}$} \label{sect:quantized recursive formula for Fij}

We finally give a qunatized version of the recursive formula in \eqref{eq:recursive_formula_fij}. 

\begin{theorem}[Quantized recursive formula for $F_{i,j}$] \label{theorem:quantized recursive formula for Fij}
Let $i \geq j \geq 1$ and $F_{i,j}$ the quantization of $f_{i,j}$ defined in \eqref{eq:Fij}.
Then the followings hold.
\begin{enumerate}
\item[(1)] For all $j \geq 1$, we have
\begin{align*} 
F_{j,j} = x_1+ \cdots + x_j. 
\end{align*}
\item[(2)] For any $i > j \geq 1$, we have
\begin{align*} 
\begin{split}
F_{i,j} &= F_{i-1,j-1} + \left( x_j \, F_{i-1,j} + \sum_{k=1}^{i-j-1} (-1)^k q_{j \, j+k} \, F_{i-1,j+k} \right) \\
& \hspace{25pt} - \left( x_i \, F_{i-1,j} + \sum_{k=1}^{i-j-1} (-1)^k q_{i-k \, i} \, F_{i-k-1,j} \right) +(-1)^{i-j}(i-j+1) q_{ji}
\end{split}
\end{align*}
with the convention $F_{*,0}=0$ for arbitrary $*$, and $\sum_{k=1}^{i-j-1} (-1)^k q_{j \, j+k} \, F_{i-1,j+k} = \sum_{k=1}^{i-j-1} (-1)^k q_{i-k \, i} \, F_{i-k-1,j} =0$ whenever $i=j+1$.
\end{enumerate}
\end{theorem}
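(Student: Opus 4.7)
The plan is to derive the recursion from the determinant formula for $F_{i,j}$ established in Corollary~\ref{corollary:determinant_formula_for_Fij}; throughout, let $M_{i,j}$ denote the $N \times N$ matrix appearing there, with $N = i - j + 1$. Part~(1) is immediate: $F_{j,j}$ is the determinant of the $1 \times 1$ matrix $(E_1^{(j)})$, and $E_1^{(j)} = x_1 + \cdots + x_j$ by Lemma~\ref{lemma:Ein}.

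For part~(2), I would apply the $E$-recursion \eqref{eq:recursive quantized elementary symmetric polynomials} entry-wise to the bottom row of $M_{i,j}$ and use multilinearity of the determinant in that row to split $F_{i,j}$ as $\det M^{(0)} + x_i \det M^{(x_i)} + \sum_{k=1}^{i-j} q_{i-k,i}\, \det M^{(k)}$, where each $M^{(\bullet)}$ is obtained from $M_{i,j}$ by modifying only the last row. The auxiliary determinants are routine to evaluate: for $\det M^{(x_i)}$, subtracting the penultimate row from the new last row clears the final column, and cofactor expansion exhibits $M_{i-1,j}$ with a sign, yielding $-F_{i-1,j}$. For $1 \le k \le i-j-1$, the last $k$ columns of $M^{(k)}$ intersect the matrix only in a $k \times k$ lower-triangular block with unit diagonal; Laplace expansion against that block, combined with the same row-subtraction trick on the complementary block, produces $(-1)^{k+1} F_{i-k-1,j}$. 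The boundary case $k = i-j$ yields a last row whose only nonzero entry is $(i-j+1)$ in column~$1$; expansion against a unit-diagonal minor furnishes the term $(-1)^{i-j}(i-j+1)\, q_{ji}$.

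The remaining task---and the main obstacle---is to show that $\det M^{(0)} = F_{i-1,j-1} + x_j F_{i-1,j} + \sum_{k=1}^{i-j-1}(-1)^k q_{j,j+k}\, F_{i-1,j+k}$. Since $M^{(0)}$ and $M_{i-1,j-1}$ share the same last row but differ in their upper rows by a shift of superscript from $(j+\rho-1)$ to $(j+\rho-2)$, I would interpolate through matrices $P^{(t)}$, $0 \le t \le N-1$, in which the last $t$ upper rows of $M^{(0)}$ are replaced bottom-up by the corresponding rows of $M_{i-1,j-1}$. By multilinearity in a single row, $\det M^{(0)} - F_{i-1,j-1}$ equals $\sum_{t'=1}^{N-1} \det \hat P_{t'}$, where $\hat P_{t'}$ has row $t'$ equal to the difference $\Delta \mathrm{row}_{t'}$ between row $t'$ of $M_{i,j}$ and of $M_{i-1,j-1}$. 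The key observation is that \eqref{eq:recursive quantized elementary symmetric polynomials} expands $\Delta \mathrm{row}_{t'}$ as a linear combination of $\mathrm{row}_{t'-1}(M_{i,j}), \ldots, \mathrm{row}_{1}(M_{i,j})$---all of which already occur as other rows of $\hat P_{t'}$ and therefore contribute determinants with repeated rows (hence $0$)---together with a single scalar multiple of $\vec e_1$, the scalar being $x_j$ for $t'=1$ and $q_{j,\,j+t'-1}$ for $t' \ge 2$. Expanding along the surviving $\vec e_1$ row reveals a block upper-triangular matrix whose lower-right block is exactly $M_{i-1,\,j+t'-1}$, so $\det \hat P_{t'}$ contributes $(-1)^{t'+1}$ times the scalar times $F_{i-1,j+t'-1}$; summing over $t'$ recovers the claimed expression for $\det M^{(0)}$. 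The conceptual hurdle is spotting this bottom-up interpolation and the vanishing-by-repetition argument that pares $\Delta \mathrm{row}_{t'}$ down to its single $\vec e_1$ piece.
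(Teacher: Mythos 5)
Your proof is correct, and it relies on the same engine as the paper — the determinant formula of Corollary~\ref{corollary:determinant_formula_for_Fij} together with the $E$-recursion \eqref{eq:recursive quantized elementary symmetric polynomials} — but the bookkeeping is organized differently in a way worth noting. The paper first performs a single row operation on the bottom row of the $F_{i,j}$-determinant, subtracting $\sum_{k=1}^{i-j} q_{j+k,\,i}\,\mathrm{row}_k$ (with $q_{ii}=x_i$); this pre-absorbs one factor, so that the multilinearity split directly yields pieces $A,\,B,\,C_k$ whose bottom rows already have the cleared form. You instead split by multilinearity straightaway (so your $M^{(x_i)}$ and $M^{(k)}$ differ from the paper's $B$ and $C_k$ by one copy of an upper row) and then clear each piece individually by a row subtraction; the outcomes agree, but the paper bundles the $q_{ji}$ contribution into $A$ while you isolate it cleanly as $q_{ji}\det M^{(i-j)}$. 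The real divergence is in the hardest piece: for $A$, the paper runs row operations on \emph{all} upper rows (subtracting $\sum_{k<\ell} q_{j+k,\,j+\ell-1}\,\mathrm{row}_k$ from row $\ell$, from $\ell=i-j$ down to $2$), which leaves a matrix equal to $M_{i-1,j-1}$ except in its first column, and then applies multilinearity in that column; you instead replace the upper rows of $M^{(0)}$ one at a time from the bottom up, and for each replacement use the $E$-recursion to expand the difference row as a combination of the still-present rows $1,\dots,t'-1$ of $M_{i,j}$ (which kill the determinant by repetition) plus a single $\vec e_1$ term, then expand along $\vec e_1$ to expose $M_{i-1,j+t'-1}$. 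Both are disguised forms of the same $E$-recursion computation, but your bottom-up interpolation with the vanishing-by-repetition step is a genuinely different packaging from the paper's "row-reduce then split the first column," and it avoids having to track the extra $q_{j,j+\ell-1}$ terms in the first column. One small caveat: in the $M^{(k)}$ step, the row-subtraction that clears the trailing column needs coefficient $k+1$ rather than $1$ (as in the $M^{(x_i)}$ case), so "the same row-subtraction trick" should be read loosely; the computation still goes through exactly as you indicate.
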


\begin{remark}
The recursive formula given in Theorem~\ref{theorem:quantized recursive formula for Fij} specializes to the recursive formula in \eqref{eq:recursive_formula_fij} when we set the quantum parameters to $0$.
\end{remark}

\begin{example}
For $1 \leq j \leq i \leq 3$, the quantizations $F_{i,j}$ of $f_{i,j}$ are computed as follows:
\begin{align*}
F_{1,1} &= x_1, \ F_{2,2} = x_1+x_2, \ F_{3,3} = x_1+x_2+x_3, \\
F_{2,1} &= x_1 F_{1,1}-x_2 F_{1,1}-2q_{12} =(x_1-x_2)x_1-2q_{12}, \\ 
F_{3,2} &= F_{2,1}+x_2 F_{2,2}-x_3 F_{2,2}-2q_{23} =(x_1-x_3)x_1+(x_2-x_3)x_2-2q_{12}-2q_{23}, \\
F_{3,1} &= (x_1 F_{2,1} - q_{12} F_{2,2})-(x_3 F_{2,1}-q_{23}F_{1,1})+3q_{13} \\
&=(x_1-x_2)(x_1-x_3)x_1 +q_{12}(-3x_1-x_2+2x_3)+q_{23}x_1+3q_{13}. 
\end{align*}
\end{example}

\begin{proof}[Proof of Theorem~\ref{theorem:quantized recursive formula for Fij}]
(1) It is clear since $F_{j,j} = E_1^{(j)} =x_1+\cdots+x_j$ from Lemma~\ref{lemma:Ein}.

(2) Recall that $F_{i,j}$ is given by \eqref{eq:determinant_formula_for_Fij}. 
For the determinant appeared in \eqref{eq:determinant_formula_for_Fij}, we subtract the sum of $k$-th raw multiplied by $q_{j+k \ i}$ for $1 \leq k \leq i-j$ from the $(i-j+1)$-th raw where we regard $q_{ii} = x_i$. 
Then the result for $(i-j+2-\ell)$-th column in the $(i-j+1)$-th raw when $1 \leq \ell \leq i-j$ is given by
\begin{align*}
\ell E_\ell^{(i)} - \left(E_{\ell-1}^{(i-1)}x_i +\sum_{k=1}^{\ell-1} E_{\ell-k-1}^{(i-k-1)} q_{i-k \, i} \right) = \ell E_\ell^{(i-1)} + (\ell-1)\left(E_{\ell-1}^{(i-1)}x_i +\sum_{k=1}^{\ell-1} E_{\ell-k-1}^{(i-k-1)} q_{i-k \, i} \right) 
\end{align*}
where we used the recursive formula \eqref{eq:recursive quantized elementary symmetric polynomials}. 
Similarly, the result for first column in the $(i-j+1)$-th raw is described as
\begin{align*}
(i-j+1)(E_{i-j+1}^{(i-1)} + q_{ji}) + (i-j)\left(E_{i-j}^{(i-1)}x_i +\sum_{k=1}^{i-j-1} E_{i-j-k}^{(i-k-1)} q_{i-k \, i} \right). 
\end{align*}
Therefore, we obtain that 
\begin{align} \label{eq_proof_quantized_recursive_formula_for_Fij_1}
F_{i,j} = A + x_i \cdot B + \sum_{k=1}^{i-j-1} q_{i-k \, i} \cdot C_k
\end{align}
where $A,B$, and $C_k$ are the following three types of determinants:
\begin{align*}
A &\coloneqq \left|
 \begin{array}{@{\,}cccccc@{\,}}
E_1^{(j)} & 1 & 0 & 0 & \cdots & 0 \\
E_2^{(j+1)} & E_1^{(j+1)} & 1 & 0 & \ddots & \vdots \\
E_3^{(j+2)} & E_2^{(j+2)} & E_1^{(j+2)} & 1 & \ddots & \vdots \\
\vdots & \vdots & \vdots & \ddots & \ddots & 0 \\
E_{i-j}^{(i-1)} & E_{i-j-1}^{(i-1)} & E_{i-j-2}^{(i-1)} & \cdots & E_1^{(i-1)} & 1 \\
(i-j+1)(E_{i-j+1}^{(i-1)}+q_{ji}) & (i-j)E_{i-j}^{(i-1)} & (i-j-1)E_{i-j-1}^{(i-1)} & \cdots & 2E_2^{(i-1)} & E_1^{(i-1)}
\end{array}
\right|; \\
B &\coloneqq \left|
 \begin{array}{@{\,}cccccc@{\,}}
E_1^{(j)} & 1 & 0 & 0 & \cdots & 0 \\
E_2^{(j+1)} & E_1^{(j+1)} & 1 & 0 & \ddots & \vdots \\
E_3^{(j+2)} & E_2^{(j+2)} & E_1^{(j+2)} & 1 & \ddots & \vdots \\
\vdots & \vdots & \vdots & \ddots & \ddots & 0 \\
E_{i-j}^{(i-1)} & E_{i-j-1}^{(i-1)} & E_{i-j-2}^{(i-1)} & \cdots & E_1^{(i-1)} & 1 \\
(i-j)E_{i-j}^{(i-1)} & (i-j-1)E_{i-j-1}^{(i-1)} & (i-j-2)E_{i-j-2}^{(i-1)} & \cdots & E_1^{(i-1)} & 0 
\end{array}
\right|; \\
C_k &\coloneqq \left|
 \begin{array}{@{\,}cccccccc@{\,}}
E_1^{(j)} & 1 & 0 & \cdots & \cdots & \cdots & \cdots & 0 \\
E_2^{(j+1)} & E_1^{(j+1)} & 1 & 0 & & & & \vdots \\
\vdots & \vdots &\ddots & \ddots & \ddots & & & \vdots \\
E_{i-j-k}^{(i-k-1)} & E_{i-j-k-1}^{(i-k-1)} & \cdots & E_1^{(i-k-1)} & 1 & 0 & \cdots & 0 \\
E_{i-j-k+1}^{(i-k)} & E_{i-j-k}^{(i-k)} & \cdots & E_2^{(i-k)} & E_1^{(i-k)} & 1 & \ddots & \vdots \\
\vdots &\vdots & &\vdots & \vdots & \ddots & \ddots & 0 \\
E_{i-j}^{(i-1)} & E_{i-j-1}^{(i-1)} & \cdots & E_{k+1}^{(i-1)} & E_k^{(i-1)} & \cdots & E_1^{(i-1)} & 1 \\
(i-j)E_{i-j-k}^{(i-k-1)} & (i-j-1)E_{i-j-k-1}^{(i-k-1)} & \cdots & (k+1)E_1^{(i-k-1)} & k & 0 & \cdots & 0 
\end{array}
\right|. 
\end{align*}
In what follows, we prove the following equalities
\begin{align}
&B= - \, F_{i-1,j}; \label{eq_proof_quantized_recursive_formula_for_Fij_B} \\ 
&C_k= (-1)^{k+1} F_{i-k-1,j}; \label{eq_proof_quantized_recursive_formula_for_Fij_Ck} \\
&A =  F_{i-1,j-1} + \left( x_j \, F_{i-1,j} + \sum_{k=1}^{i-j-1} (-1)^k q_{j \, j+k} \, F_{i-1,j+k} +(-1)^{i-j}(i-j+1)q_{ji} \right). \label{eq_proof_quantized_recursive_formula_for_Fij_A} 
\end{align}
\noindent
\underline{\textbf{Proof of \eqref{eq_proof_quantized_recursive_formula_for_Fij_B}:}}
By the cofactor expansion along the last column for $B$, we have 
\begin{align*}
B = -F_{i-1,j} 
\end{align*}
by Corollary~\ref{corollary:determinant_formula_for_Fij}.
We proved \eqref{eq_proof_quantized_recursive_formula_for_Fij_B}. 

\noindent
\underline{\textbf{Proof of \eqref{eq_proof_quantized_recursive_formula_for_Fij_Ck}:}}
We interchange adjacent rows in $C_k$ such that the last row is interchanged with the row immediately above until it is exactly the $(i-j-k+1)$-th row. 
Then we have 
\begin{align*}
C_k = (-1)^k \left|
\begin{array}{@{\,}ccccc@{\,}}
E_1^{(j)} & 1 & 0 & \cdots & 0 \\
E_2^{(j+1)} & E_1^{(j+1)} & 1 & \ddots & \vdots \\
\vdots & \vdots &\ddots & \ddots & 0  \\
E_{i-j-k}^{(i-k-1)} & E_{i-j-k-1}^{(i-k-1)} & \cdots & E_1^{(i-k-1)} & 1 \\
(i-j)E_{i-j-k}^{(i-k-1)} & (i-j-1)E_{i-j-k-1}^{(i-k-1)} & \cdots & (k+1)E_1^{(i-k-1)} & k  
\end{array}
\right|. 
\end{align*}
By subtracting $(i-j-k)$-th raw multiplied by $k$ from the $(i-j-k+1)$-th raw, the right hand side above equals 
\begin{align*}
(-1)^k \left|
\begin{array}{@{\,}ccccc@{\,}}
E_1^{(j)} & 1 & 0 & \cdots & 0 \\
E_2^{(j+1)} & E_1^{(j+1)} & 1 & \ddots & \vdots \\
\vdots & \vdots &\ddots & \ddots & 0  \\
E_{i-j-k}^{(i-k-1)} & E_{i-j-k-1}^{(i-k-1)} & \cdots & E_1^{(i-k-1)} & 1 \\
(i-j-k)E_{i-j-k}^{(i-k-1)} & (i-j-k-1)E_{i-j-k-1}^{(i-k-1)} & \cdots & E_1^{(i-k-1)} & 0  
\end{array}
\right| = (-1)^{k+1} F_{i-k-1,j}
\end{align*}
where we used the cofactor expansion along the last column and Corollary~\ref{corollary:determinant_formula_for_Fij} for the equality above.
We proved \eqref{eq_proof_quantized_recursive_formula_for_Fij_Ck}. 

\noindent
\underline{\textbf{Proof of \eqref{eq_proof_quantized_recursive_formula_for_Fij_A}:}}
For each $\ell$-th row of $A$ except for the last row, we subtract the sum of $k$-th raw multiplied by $q_{j+k \ j+\ell-1}$ for $1 \leq k \leq \ell-1$ (regarded as $q_{j+\ell-1 \, j+\ell-1} = x_{j+\ell-1}$) from the $\ell$-th raw.
We proceed in steps from $\ell=i-j$ to $\ell=2$. 
By using the recursive formula \eqref{eq:recursive quantized elementary symmetric polynomials}, the result is 
\begin{align*}
\left|
\begin{array}{@{\,}cccccc@{\,}}
E_1^{(j)} & 1 & 0 & 0 & \cdots & 0 \\
E_2^{(j)}+q_{j \, j+1} & E_1^{(j)} & 1 & 0 & \ddots & \vdots \\
E_3^{(j+1)}+q_{j \, j+2} & E_2^{(j+1)} & E_1^{(j+1)} & 1 & \ddots & \vdots \\
\vdots & \vdots & \vdots & \ddots & \ddots & 0 \\
E_{i-j}^{(i-2)}+q_{j \, i-1} & E_{i-j-1}^{(i-2)} & E_{i-j-2}^{(i-2)} & \cdots & E_1^{(i-2)} & 1 \\
(i-j+1)(E_{i-j+1}^{(i-1)}+q_{ji}) & (i-j)E_{i-j}^{(i-1)} & (i-j-1)E_{i-j-1}^{(i-1)} & \cdots & 2E_2^{(i-1)} & E_1^{(i-1)}
\end{array}
\right|.
\end{align*}
Noting that $E_1^{(j)}= E_1^{(j-1)}+x_j$ for the $(1,1)$-entry, the determinant above is the sum of the following two types of determinants:
\begin{align*}
A_1 &\coloneqq \left|
\begin{array}{@{\,}cccccc@{\,}}
E_1^{(j-1)} & 1 & 0 & 0 & \cdots & 0 \\
E_2^{(j)} & E_1^{(j)} & 1 & 0 & \ddots & \vdots \\
E_3^{(j+1)} & E_2^{(j+1)} & E_1^{(j+1)} & 1 & \ddots & \vdots \\
\vdots & \vdots & \vdots & \ddots & \ddots & 0 \\
E_{i-j}^{(i-2)} & E_{i-j-1}^{(i-2)} & E_{i-j-2}^{(i-2)} & \cdots & E_1^{(i-2)} & 1 \\
(i-j+1)E_{i-j+1}^{(i-1)} & (i-j)E_{i-j}^{(i-1)} & (i-j-1)E_{i-j-1}^{(i-1)} & \cdots & 2E_2^{(i-1)} & E_1^{(i-1)}
\end{array}
\right|; \\
A_2 &\coloneqq \left|
\begin{array}{@{\,}cccccc@{\,}}
x_j & 1 & 0 & 0 & \cdots & 0 \\
q_{j \, j+1} & E_1^{(j)} & 1 & 0 & \ddots & \vdots \\
q_{j \, j+2} & E_2^{(j+1)} & E_1^{(j+1)} & 1 & \ddots & \vdots \\
\vdots & \vdots & \vdots & \ddots & \ddots & 0 \\
q_{j \, i-1} & E_{i-j-1}^{(i-2)} & E_{i-j-2}^{(i-2)} & \cdots & E_1^{(i-2)} & 1 \\
(i-j+1)q_{ji} & (i-j)E_{i-j}^{(i-1)} & (i-j-1)E_{i-j-1}^{(i-1)} & \cdots & 2E_2^{(i-1)} & E_1^{(i-1)}
\end{array}
\right|.
\end{align*}
In other words, we now have 
\begin{align*}
A = A_1 + A_2.
\end{align*}
By Corollary~\ref{corollary:determinant_formula_for_Fij} we have 
\begin{align*}
A_1 = F_{i-1,j-1}.
\end{align*}
It follows from the cofactor expansion along the first column for $A_2$, we also have 
\begin{align*}
A_2 = x_j \, F_{i-1,j} + \sum_{k=1}^{i-j-1} (-1)^k q_{j \, j+k} \, F_{i-1,j+k} +(-1)^{i-j}(i-j+1)q_{ji}
\end{align*}
by using Corollary~\ref{corollary:determinant_formula_for_Fij} again.
We proved \eqref{eq_proof_quantized_recursive_formula_for_Fij_A}. 

By \eqref{eq_proof_quantized_recursive_formula_for_Fij_1}, \eqref{eq_proof_quantized_recursive_formula_for_Fij_B}, 
\eqref{eq_proof_quantized_recursive_formula_for_Fij_Ck}, and \eqref{eq_proof_quantized_recursive_formula_for_Fij_A}, 
we obtain the desired recursive formula.
\end{proof}

\begin{example}
One can see from Theorem~\ref{theorem:quantized recursive formula for Fij} that
\begin{align*}
F_{j+1,j} = f_{j+1,j} -\sum_{k=1}^{j} 2q_{k \, k+1} = \sum_{k=1}^{j} \big( (x_k-x_{j+1})x_k -2q_{k \, k+1} \big) \ \ \ \textrm{for} \ j \geq 1. 
\end{align*}
In fact, it follows from Theorem~\ref{theorem:quantized recursive formula for Fij} that 
\begin{align*}
F_{k+1,k} - F_{k,k-1} = (x_k-x_{k+1})(x_1+\cdots+x_k) - 2q_{k \, k+1}
\end{align*}
for any $k \geq 1$. Thus, we have
\begin{align*}
F_{j+1,j}=\sum_{k=1}^j (F_{k+1,k} - F_{k,k-1}) = \sum_{k=1}^{j} \big( (x_k-x_{j+1})x_k -2q_{k \, k+1} \big). 
\end{align*}
\end{example}

\bigskip

\end{document}